\newcommand{\bsx}{\boldsymbol{x}}
\newcommand{\bsy}{\boldsymbol{y}}
\newcommand{\bS}{\mathbb{S}}
\newcommand{\R}{\mathbb{R}}
\newcommand{\N}{\mathbb{N}}
\newcommand{\bE}{\mathbb{E}}
\newcommand{\bC}{\mathbb{C}}
\newcommand{\calE}{\boldsymbol{\mathcal{E}}}
\newcommand{\calK}{\boldsymbol{\mathcal{K}}}
\newcommand{\calL}{\mathcal L}
\newcommand{\calA}{\mathcal A}
\newcommand{\calC}{\mathcal C}
\newcommand{\calI}{\mathcal I}
\newcommand{\calJ}{\mathcal J}
\renewcommand{\Re}{\operatorname{Re}}
\newcommand{\mi}{\mathrm{i}}
\def\Arg{\mathop{\operator@font Arg}\nolimits}
\newtheorem{theo}{Theorem}[section]
\newtheorem{lem}{Lemma}[section]
\newtheorem{prop}{Proposition}[section]
\newtheorem{rem}{Remark}[section]
\newtheorem{ass}{Assumption}[section]
\newtheorem{defin}{Definition}[section]
\numberwithin{equation}{section}
\journal{Chaos, Solitons \& Fractals}
\begin{document}

\begin{frontmatter}
	
	
	
	\title{Evolution of time-fractional stochastic hyperbolic diffusion equations on the unit sphere}
	
	
	\author[label1]{Tareq Alodat} 
	
	\affiliation[label1]{organization={Department of Mathematical and Physical Sciences},
		addressline={La Trobe University}, 
		city={Melbourne},
		country={Australia}}
	\author[label2]{Quoc T. Le Gia} 
	\affiliation[label2]{organization={School of Mathematics and Statistics},
		addressline={UNSW}, 
		city={Sydney},
		country={Australia}}

	\begin{abstract}
This paper examines the temporal evolution of a two-stage stochastic model for spherical random fields. The model uses a time-fractional stochastic hyperbolic diffusion equation, which describes the evolution of spherical random fields on $\bS^2$ in time. The diffusion operator incorporates a time-fractional derivative in the Caputo sense. In the first stage of the model, a homogeneous problem is considered, with an isotropic Gaussian random field on $\bS^2$ serving as the initial condition. In the second stage, the model transitions to an inhomogeneous problem driven by a time-delayed Brownian motion on $\bS^2$. 
The solution to the model is expressed through a series of real spherical harmonics. To obtain an approximation, the expansion of the solution is truncated at a certain degree $L\geq1$. The analysis of truncation errors reveals their convergence behavior, showing that convergence rates are affected by the decay of the angular power spectra of the driving noise and the initial condition. In addition, we investigate the sample properties of the stochastic solution, demonstrating that, under some conditions, there exists a local H\"{o}lder continuous modification of the solution. To illustrate the theoretical findings, numerical examples and simulations inspired by the cosmic microwave background (CMB) are presented.
\end{abstract}



\begin{keyword}
	Mittag-Leffler function \sep spherical field \sep hyperbolic diffusion equation \sep time-delayed Brownian motion \sep spherical harmonics	
	
\end{keyword}

\end{frontmatter}

\section{Introduction}
In recent years, there has been a surge of interest in investigating the evolution of stochastic systems that involve random fields on the sphere (or spherical random fields), particularly those defined by stochastic partial differential equations (SPDEs) \cite{Vho2021,CohLan21,KazLeG19,LanSch15}. 
 Spherical random fields find widespread applications in a variety of fields, including earth sciences (e.g., \cite{Christakos1992,Christakos2017,Fisher1993,Stein2007}) and cosmology (e.g., \cite{Schafer2015}).

One stochastic system of particular interest, and the main focus of this work, is the time-fractional stochastic hyperbolic diffusion equation for spherical random fields (see \cite{Olenko2019,Leonenko}). In \cite{Olenko2019}, the authors considered the classical stochastic hyperbolic diffusion equation, which they defined as a homogeneous problem governed by ordinary partial derivatives in time and a fractional diffusion operator in space. This model included a random initial condition, represented by a strongly isotropic Gaussian random field on the unit sphere $\bS^2$. They derived an explicit stochastic solution and its approximation, both expressed as series expansions of elementary functions. Additionally, they established upper bounds for the convergence rates of the approximation errors and investigated various properties of the stochastic solution and its approximation.

In \cite{Leonenko}, the authors considered a homogeneous problem defined by a hyperbolic diffusion equation governed by time-fractional derivatives and a fractional diffusion operator in space with a random initial condition. They then derived the stochastic solution to the equation, expressed as a series expansion of isotropic (in space) spherical random fields.

Although the models considered in \cite{Olenko2019,Leonenko} depict initial isotropic random fields evolving through hyperbolic diffusion processes, their structures incorporate only a single source of randomness, originating from the initial condition, with no external noise present at any time. Additionally, \cite{Leonenko} exclusively presents the solution to the equation in the form of an infinite series. However, the necessity for an approximate solution becomes apparent when contemplating numerical implementations.
These research gaps motivate us to extend the previous works \cite{Leonenko,Olenko2019} with a more practical setting.  

Let ${^C}D_{t}^{\beta}$ be the time-fractional derivatives of order $\beta>0$ in the Caputo (or Caputo-Djrbashian) sense. It is defined for a suitably regular function $g(t)$ (see \cite{Podlubny}), for $ t>0$, as 
\begin{align}\label{Dcaputo}
  {^C}D_{t}^{\beta} g(t):&=
  \begin{cases}
		\dfrac{1}{\Gamma(m-\beta)}\displaystyle\int_0^t \dfrac{1}{(t-s)^{1+\beta-m}} \dfrac{\partial^{m}}{\partial s^{m}}g(s) ds,& m-1<\beta<m,\\
		\dfrac{\partial^m}{\partial t^m}g(t),&\beta=m,
	\end{cases}
\end{align}
where $m\in\N$ and $\beta>0$.

The fractional integral of order $\beta$ for $g$, denoted as ${_a}\calJ_{t}^{\beta}g(t):= {_a}{^C}D_t^{-\beta}$, $a,\beta>0$, is defined in \cite[Chapter 2]{Podlubny}, as
\begin{align}\label{fracint}
   {_a}\calJ_{t}^{\beta}g(t):=  \frac{1}{\Gamma(\beta)}\int_{a}^{t} (t-s)^{\beta-1}g(s)ds.
\end{align}

To establish the framework of the model presented in this paper, we introduce two key operators, $\calE$ and $\calK$, which are defined as follows:
\[
\calE:= \frac{1}{c^2}{^C}D_t^{2\alpha} +\frac{1}{\gamma} {^C}D_t^{\alpha} - k^2\Delta_{\bS^2}
\]
and
\[
\calK:= I+\frac{c^2}{\gamma}  {_0}\calJ_{t}^{\alpha}- (ck)^2 {_0}\calJ_{t}^{2\alpha}\Delta_{\bS^2},
\]
where $\Delta_{\bS^2}$ is the Laplace-Beltrami operator.

We investigate a time-fractional hyperbolic diffusion equation that incorporates a time-delayed Brownian motion $W_{\tau}$ on $\bS^2$ as the external noise, i.e.,
\begin{align}\label{Sys}
   \calE U(\bsx,t)dt = 
   \begin{cases}
		0,&(\bsx,t)\in \bS^2\times(0,\tau],\\
		dW_{\tau}(\bsx,t),&(\bsx,t)\in \bS^2\times[\tau,\infty),
	\end{cases} 
\end{align}
with the initial conditions
\begin{align}\label{cond1}
       U(\bsx,0)= \xi(\bsx),\ \frac{\partial}{\partial t}U(\bsx,t)|_{t=0}=0,
\end{align}
\noindent
where $\xi$ is a strongly isotropic Gaussian random field on $\bS^2$, $\alpha$ is a constant satisfying $\frac{1}{2} < \alpha \leq1$, and the parameters $c,\gamma,k>0$ are constants. 


It is known (see \cite{Podlubny}) that fractional differential equations of order $\alpha$ require $\lceil\alpha \rceil$ initial conditions, where $\lceil\alpha \rceil$ is the lowest integer greater than $\alpha$. Therefore, solving the system \eqref{Sys} with $\frac{1}{2}<\alpha\leq1$ requires two initial conditions (since $\lceil 2\alpha \rceil=2$), thus justifying the conditions specified by \eqref{cond1}.

It is worth noting that \cite{Leonenko} considered the case where $\alpha\in(0,1]$, while in present work we focus exclusively on the range $\alpha\in(\frac{1}{2},1]$. Although it is possible to obtain a solution to equation \eqref{Sys} for the entire range $\alpha\in(0,1]$, we have found that addressing the case where $\alpha\in(0,\frac{1}{2}]$ requires additional work. Specifically, further research is necessary to establish the convergence of the solution in the space $L_2(\bS^2\times\Omega)$ where $\Omega$ is a sample space.

The model given by equation \eqref{Sys} describes the evolution of an isotropic spherical random field subject to time-fractional hyperbolic diffusion. Notably, there are no external inputs acting on the system until a specific time instant $\tau$, at which point the external noise $W_\tau$ is introduced. This noise serves as an external driving force, influencing the subsequent evolution of the system.

Given that the random field $\xi$ and the driving noise $W_\tau$ are uncorrelated, and taking into account the linearity of equation \eqref{Sys}, we can write $U$ as
\begin{align}\label{Full}
	U:= U^{H}+U^{I}, 
\end{align}
where $U^{H}$ solves the homogeneous equation
\begin{align}\label{Hom}
\calE U^H(\bsx,t)= 0,\  (\bsx,t)\in \bS^2\times(0,\infty),
\end{align}
with the conditions 
\begin{align}\label{Homcond1}
       U^H(\bsx,0)= \xi(\bsx),\ \frac{\partial}{\partial t}U^H(\bsx,t)|_{t=0}=0,
\end{align}
and $U^{I}$ solves the inhomogeneous equation
\begin{align}\label{Inhom}
  \calE U^I(\bsx,t)dt =
 \begin{cases}
		0,&(\bsx,t)\in \bS^2\times(0,\tau],\\
		dW_{\tau}(\bsx,t),&(\bsx,t)\in \bS^2\times [\tau,\infty),
	\end{cases}  
\end{align}
with the conditions $U^I(\bsx,t)=0$, for $(\bsx,t)\in \bS^2\times(0,\tau]$ and 
\begin{align}\label{Inhomcond1}
       \frac{\partial}{\partial t}U^I(\bsx,t)|_{t=0}=0,\quad (\bsx,t)\in \bS^2\times[\tau,\infty).
\end{align}
Using properties of fractional integrals \eqref{fracint}, the solution to \eqref{Inhom} can be  then interpreted as the solution to the corresponding fractional stochastic integral equation, for $t\geq\tau$,
\begin{align}\label{IntegformInhom}
  \calK U^I(\bsx,t)-\dfrac{c^2}{\Gamma(2\alpha)}\int_{\tau}^{t} (t-s)^{2\alpha-1}dW_{\bsx,\tau}(s)=0. 
\end{align}
The structure of equation \eqref{Sys} incorporates two different sources of randomness: the initial condition $\xi$ and the driving noise $W_\tau$. Due to the linearity of the system defined by \eqref{Sys}, the stochastic solution $U$ can be split into two independent stochastic components, as shown in equation \eqref{Full}. This decomposition effectively isolates the two sources of randomness. Specifically, the homogeneous solution $U^H$ is influenced solely by the initial condition, while the inhomogeneous solution $U^I$ is driven exclusively by noise $W_\tau$.

We used the Laplace transform to derive the solution of the model. The derived solution is shown to satisfy the time-fractional equation by establishing a novel result (see Theorem \ref{fracstochin}) that provides a rigorous foundation for applying fractional integrals to stochastic integrals.

The main result of this paper is Theorem~\ref{Theo3}, which provides the solution, in $L_2(\Omega\times\bS^2)$, to the system described by equation~\eqref{Sys}. The solution $U$ is given in the form of an infinite series in terms of real-valued orthonormal spherical harmonics $\{Y_{\ell,m}: \ell\in\N_{0}, m=-\ell,\dots,\ell\}$. This result is obtained by combining the solutions for the homogeneous and inhomogeneous problems. The homogeneous solution is established in Proposition~\ref{Theo1} in the \(L_2(\Omega \times \bS^2)\) sense, and its isotropy and Gaussianity are demonstrated in Proposition~\ref{Gausshomo}. For the inhomogeneous problem, Proposition~\ref{inhom_esta} derives the solution, and Proposition~\ref{covUI} confirms that it is also isotropic and Gaussian. Finally, Proposition~\ref{Fulliso} shows that the combined solution of the system remains Gaussian.

Given that the solution to equation \eqref{Sys} is expressed as an infinite series, we shall obtain
an approximate solution, denoted $U_L$, for the solution $U$ by truncating its expansion at a certain level $\ell=L\geq1$. This truncated version provides significant computational advantages. 

The paper is organized as follows. Section \ref{Sec2} provides an essential background on spherical functions, Gaussian random fields, \(L_2(\bS^2)\)-valued time-delayed Brownian motions, and technical results on Mittag-Leffler functions and their stochastic integrals. 
Section \ref{specsol} presents the spectral solutions for the model defined by \eqref{Sys}, the homogeneous and inhomogeneous equations. Specifically, Subsection \ref{Sec_sol_Hom} derives the solution for the homogeneous equation and analyzes its statistical properties, while Subsection \ref{Sec_sol_inHom} addresses the solution for the inhomogeneous equation and examines its statistical properties. Then the solution of equation \eqref{Sys} is derived in Subsection \ref{FullSec}, while its sample Hölder continuity is investigated in Subsection \ref{SecHoder}. Section \ref{SecApprox} develops an approximation for the solution of \eqref{Sys} and evaluates the convergence rate of truncation errors. Finally, Section \ref{Num} conducts numerical studies to observe the evolution of the solution \(U\) for equation \eqref{Sys} using simulated data inspired by the CMB map, and assesses the convergence rates of truncation errors associated with the approximations.

\section{Preliminaries} \label{Sec2}
\subsection{Spherical Functions}
In this section, we present fundamental notation and results from the theory of functions on the unit sphere. 

Consider the 3-dimensional Euclidean space, denoted as $\R^3$. Let $\bsx$ and $\bsy$ be elements of $\R^3$ such that $\bsx=(x_1,x_2,x_3)$ and $\bsy=(y_1,y_2,y_3)$. The inner product of $\bsx$ and $\bsy$ is given by $\bsx\cdot\bsy:=\sum_{i=1}^{3}x_{i}y_{i}$, and the Euclidean norm of $\bsx$ is defined as $|\bsx|:=\sqrt{\bsx\cdot\bsx}$. Let $\bS^2$ be the 2-dimensional unit sphere in $\R^3$, i.e., $\bS^2 :=\left\lbrace\bsx \in \R^3: |\bsx |=1 \right\rbrace$. The pair $(\bS^2,\tilde{d})$ forms a compact metric space, where $\tilde{d}$ is the geodesic metric, defined as $\tilde{d}(\bsx,\bsy):=\arccos(\bsx\cdot\bsy)$ for any $\bsx,\bsy\in\bS^2$. 

Let $\mu$ be the normalized Riemann surface measure on $\bS^2$ (i.e. $\int_{\bS^2}d\mu=1$). Let $L_{2}(\bS^2):=L_{2}(\bS^2,\mu)$ be the space of real-valued square $\mu$-integrable functions on $\bS^2$, i.e.,
\[
L_{2}(\bS^2)=\left\{f: \int_{\bS^2}(f(\bsx))^2d\mu(\bsx)<\infty\right\}
\]
with the inner product
\[
\langle f,g\rangle_{L_{2}(\bS^2)}:=\int_{\bS^2}f(\bsx)g(\bsx)d\mu(\bsx),\quad f,g\in L_{2}(\bS^2).
\]
The norm of $f\in L_{2}(\bS^2)$ is defined as
$\|f\|_{L_{2}(\bS^2)}:=\sqrt{\langle f,f\rangle_{L_{2}(\bS^2)}}$.

Let $\N_{0}=\{0,1,2,\dots\}$ and $\{ Y_{\ell,m}: \ell\in\N_{0}, m=-\ell,\dots,\ell \}$ be an orthonormal basis of real-valued spherical harmonics of $L_{2}(\bS^2)$. In terms of the spherical coordinates \begin{equation}\label{sph-coord}
\bsx(\theta,\varphi)=(\sin\theta\cos\varphi,\sin\theta\sin\varphi,\cos\theta)\in\bS^2, \quad\theta\in[0,\pi],\, \varphi\in[0,2\pi), 
\end{equation}
the real-valued spherical harmonic functions can be written as (see \cite{Whittaker}), for $\ell\in\N_{0}$,
\begin{align*}
Y_{\ell m}(\theta, \varphi) := \begin{cases}
    (-1)^m \sqrt{2} \sqrt{\dfrac{(2\ell + 1)(\ell - |m|)!}{(\ell + |m|)!}} \, P_{\ell,|m|}(\cos \theta) \sin(|m|\varphi) & \text{if } m < 0, \\
    \sqrt{2\ell + 1} \, P_{\ell,m}(\cos \theta) & \text{if } m = 0, \\
    (-1)^m \sqrt{2} \sqrt{\dfrac{(2\ell + 1)(\ell - m)!}{(\ell + m)!}} \, P_{\ell,m}(\cos \theta) \cos(m\varphi) & \text{if } m > 0,
\end{cases}
\end{align*}
where 
$P_{\ell,m}$, $\ell\in\N_{0}$, $m=0,\dots,\ell$, are the associated Legendre functions, 
and $P_\ell$, $\ell\in\N_{0}\}$, are the Legendre polynomials of degree $\ell$.

In what follows, we adopt the notation $Y_{\ell,m}$ to represent spherical harmonics as a function of both spherical and Euclidean coordinates. Specifically, for all $\bsx \in \mathbb{S}^2$, we shall write (with a slight abuse of notation)
		$Y_{\ell,m}(\bsx) := Y_{\ell,m} (\theta, \varphi)$, where $\bsx = (\sin \theta \cos \varphi, \sin \theta \sin \varphi, \cos \theta)$ and $(\theta, \varphi) \in [0, \pi] \times [0, 2\pi).$

The basis $Y_{\ell,m}$ and the Legendre polynomial $P_{\ell}$ satisfy the addition theorem (see \cite{Muller}), i.e.,
\begin{align}\label{addition}
	\sum_{m=-\ell}^{\ell} Y_{\ell,m}(\bsx)Y_{\ell,m}(\bsy)=(2\ell+1)P_{\ell}(\bsx\cdot\bsy),\quad \bsx,\bsy\in\bS^2.
\end{align}
Since the spherical harmonic functions $\{Y_{\ell,m}: \ell\in\N_{0}, m=-\ell,\dots,\ell \}$ are the eigenfunctions of the negative Laplace-Beltrami operator $-\Delta_{\bS^2}$ on the sphere $\bS^2$ with eigenvalues (see \cite{Muller,Dai})
\begin{align}\label{lam}
	\lambda_{\ell}:=\ell(\ell+1),\quad \ell\in\N_{0}, 
\end{align}
we have $-\Delta_{\bS^2}Y_{\ell,m}=\lambda_{\ell}Y_{\ell,m},\quad \ell\in\N_{0}, m=-\ell,\dots,\ell.$

Let $f\in L_{2}(\bS^2)$, then $f$ can be expanded in terms of a Fourier-Laplace series, in the $ L_{2}(\bS^2)$ sense,
\begin{align}\label{Expansion}
	f&=\sum_{\ell=0}^{\infty}\sum_{m=-\ell}^{\ell}\widehat{f}_{\ell,m}Y_{\ell,m},\ \text{with}\  \widehat{f}_{\ell,m}:=\int_{\bS^2}f(\bsx)Y_{\ell,m}(\bsx)d\mu(\bsx),
\end{align}
where  $\widehat{f}_{\ell,m}$, for $\ell\in\N_{0},m=-\ell,\dots,\ell$, are the Fourier coefficients for the function $f$ under the Fourier basis $Y_{\ell,m}$. The series \eqref{Expansion} along with Parseval’s theorem gives
$
\|f\|_{L_{2}(\bS^2)}^2= \sum_{\ell=0}^{\infty}\sum_{m=-\ell}^{\ell} (\widehat{f}_{\ell,m} )^2.
$

\subsection{Spherical isotropic random fields}
This subsection introduces a class of random fields, namely isotropic Gaussian fields on $\bS^2$. These fields can be characterized by an expansion using real spherical harmonics.

Throughout, we denote by $(\Omega,\mathcal{F},\mathbb{P})$ a probability space. Let $L_{2}(\Omega,\mathbb{P})$ be the $L_{2}$-space on $\Omega$ with the probability measure $\mathbb{P}$ and equipped with the norm $|\cdot|_{L_{2}(\Omega)}$. For two random variables $X$ and $Y$ on $(\Omega,\mathcal{F},\mathbb{P})$, we denote by $\bE [X]$ the expected value of $X$ and $Cov(X,Y):=\bE[(X-\bE[X])(Y-\bE[Y])]$ the covariance between $X$ and $Y$. When two random variables $X$ and $Y$ have the same distribution, we shall use the notation $X\stackrel{d}{=} Y$. If $X\stackrel{d}{=}Y$, $Cov(X,X)$ is the variance of $X$, denoted by $Var[X]$. Moreover, $\varphi_{X}(\cdot)$ is the characteristic function of $X$ and is defined as $\varphi_{X}(r):=\bE\big[e^{\mi r X}\big]$.

The Borel $\sigma$-algebra on the sphere $\bS^2$ is represented by $\mathfrak{B}(\bS^2)$, while $SO(3)$ denotes the rotation group on $\R^3$. 
A real-valued random field on $\bS^2$ is a jointly $\mathcal{F}\otimes\mathfrak{B}(\bS^2)$-measurable function $\xi: \Omega\times\bS^2\to\R$. The real-valued $L_2$-space of random fields on $\Omega\times\bS^2$, with the product measure $\mathbb{P}\otimes\mu$, is denoted by $L_{2}(\Omega\times\bS^2,\mathbb{P}\otimes\mu)=: L_{2}(\Omega\times\bS^2)$. In this paper, we assume that the random field $\xi$ belongs to $L_{2}(\Omega\times\bS^2)$. Using Fubini's theorem, the inner product of $\xi_1,\xi_2\in L_{2}(\Omega\times\bS^2)$ can be expressed as:
\begin{align*}
	\langle \xi_1,\xi_2\rangle_{L_{2}(\Omega\times\bS^2)}:&=\int_{\Omega\times\bS^2}\xi_1(\omega,\bsx) \xi_2(\omega,\bsx)d(\mathbb{P}\otimes\mu)(\omega,\bsx)
	= \bE\left[	\langle \xi_1,\xi_2\rangle_{L_{2}(\bS^2)}\right].
\end{align*}
The norm of a real-valued random field $\xi_1\in L_{2}(\Omega\times\bS^2)$ is given by
\[
\|\xi_1\|_{L_{2}(\Omega\times\bS^2)}:=\sqrt{\langle \xi_1,\xi_1\rangle_{L_{2}(\Omega\times\bS^2)}}.
\]
In particular, $\xi(\omega,\cdot)\in L_{2}(\bS^2)$ $\mathbb{P}$-a.s., since 
$\|\xi\|_{L_{2}(\Omega\times\bS^2)}^2=\bE\big[	\|\xi\|_{L_{2}(\bS^2)}^2\big]$.  
For brevity, we write $\xi(\omega,\bsx)$ as $\xi(\bsx)$ or $\xi(\omega)$ if no confusion arises.

\begin{defin} [{\rm\cite{MarPec11}}]
    A random field $\xi$ on $\bS^2$ is said to be strongly isotropic if, for any $k\in\N$ and for all sets of $k$ points $\bsx_1,\dots,\bsx_k\in\bS^2$, and for any rotation $\rho\in SO(3)$, the joint distributions of $\xi(\bsx_1),\dots,\xi(\bsx_k)$ and $\xi(\rho\bsx_1),\dots,\xi(\rho\bsx_k)$ coincide.
\end{defin}

The random field $\xi$ is called a $2$-weakly, isotropic random field if for all $\bsx\in\bS^2$, the second moment of $\xi(\bsx)$ is finite, that is $\bE[|\xi(\bsx)|^2]<\infty$, and for all $\bsx_1,\bsx_2\in\bS^2$, there holds
\[
\bE [\xi(\bsx_1)]=\bE [\xi(\rho\bsx_1)],\ \bE [\xi(\bsx_1)\xi(\bsx_2)]=\bE [\xi(\rho\bsx_1)\xi(\rho\bsx_2)],
\]
for any rotation $\rho\in SO(3)$.

\begin{defin}
A Gaussian random field on $\mathbb{S}^2$ is a random field $\xi: \Omega \times \mathbb{S}^2 \rightarrow \mathbb{R}$ such that for every $k\in\mathbb{N}$ and every set of $k$ points $\mathbf{x}_1,\dots,\mathbf{x}_k\in \mathbb{S}^2$, the vector $(\xi(\mathbf{x}_1),\dots,\xi(\mathbf{x}_k))$ has a multivariate Gaussian distribution.
\end{defin}
We remark that a Gaussian 2-weakly isotropic field is also strongly isotropic (see \cite[Proposition 5.10]{MarPec11}). In this paper, we shall focus on Gaussian random fields that are strongly isotropic.

An arbitrary random field $\xi(\omega,\cdot)\in L_{2}(\bS^2)$ $\mathbb{P}$-a.s. admits an expansion in terms of real spherical harmonics, $\mathbb{P}$-a.s.,
\begin{align}\label{xi}
	\xi=\sum_{\ell=0}^\infty \sum_{m=-\ell}^{\ell}  \widehat{\xi}_{\ell,m} Y_{\ell,m},\quad \widehat{\xi}_{\ell,m}= \int_{\bS^2}\xi(\bsx)Y_{\ell,m}(\bsx)d\mu(\bsx).
\end{align}

\begin{rem}\label{randrem}
In {\rm\cite{MarPec11}}, the spectral representation of real-valued random fields relies on complex spherical harmonics. Since we adopt real spherical harmonics in our approach, the conditions on the random variables \(\widehat{\xi}_{\ell,m}\) are adjusted accordingly.
\end{rem}

If $\xi$ is a centered, 2-weakly isotropic Gaussian random field, by adapting the proof of 
\cite[Proposition 6.6]{MarPec11}, the Fourier coefficients $\widehat{\xi}_{\ell,m}$ of $\xi$ in \eqref{xi} are uncorrelated mean-zero real-valued random variables, i.e., for $\ell\in\N_{0}$, $m=-\ell,\dots,\ell$,
\begin{align}\label{cellm}
\bE\left[\widehat{\xi}_{\ell,m}\right]=0,\quad\bE\left[\widehat{\xi}_{\ell,m}\widehat{\xi}_{\ell^{\prime},m^{\prime}}\right]=
\calC_{\ell}\delta_{\ell\ell^\prime}\delta_{mm^\prime},
\end{align}
where $\delta_{\ell\ell^\prime}$ is the Kronecker delta function and the elements of the sequence  $\{\calC_{\ell}: \ell\in\N_{0}\}$ are non-negative numbers, which is called the angular power spectrum of $\xi$. It follows that the field $\xi$ is a centered random variable with the covariance function given by
\begin{align}
\bE\left[\xi(\bsx)\xi(\bsy)\right]&=\sum_{\ell=0}^\infty \sum_{m=-\ell}^{\ell}\calC_{\ell}Y_{\ell,m}(\bsx)Y_{\ell,m}(\bsy) \nonumber\\
&=\sum_{\ell=0}^{\infty} (2\ell+1)\calC_{\ell}P_{\ell}(\bsx\cdot\bsy), \quad \bsx,\bsy\in\bS^2,\label{EqCl}
\end{align}
where $P_{\ell}$ are the Legendre polynomials. The last step follows from the addition theorem for spherical harmonics \eqref{addition}.

\begin{ass}\label{Ass_cell} We assume that the angular power spectrum of a centered, $2$-weakly isotropic random field $\xi$ satisfies 
\begin{align}\label{eq:Cell}
    \sum_{\ell=0}^{\infty} (2\ell+1)\calC_{\ell}<\infty.
\end{align}
\end{ass}
\begin{rem} \label{rem1-1}
Let $\xi$ be a centered, strongly isotropic Gaussian random field defined on $\bS^2$ with angular power spectrum $\{\calC_{\ell}:\ell\in\N_{0}\}$ satisfying the Assumption {\rm\ref{Ass_cell}}. Let $\{Z_{\ell,m},\ \ell\in\N_0,\ m=-\ell,\dots,\ell \}$ be a set of independent, real-valued, standard normally distributed random variables, then the Karhunen--Lo\`eve expansion of $\xi$ is
\begin{align*}
\xi&\stackrel{d}{=}\sum_{\ell=0}^{\infty}\sum_{m=-\ell}^{\ell} \sqrt{\calC_{\ell}}Z_{\ell,m}Y_{\ell,m},
\end{align*}
where the series is convergent in the $L_{2}(\Omega\times\bS^2)$ sense.
\end{rem}

\subsection{Mittag-Leffler functions and their upper bounds}\label{mittagFun}

Let $\Gamma(\cdot)$ be the gamma function and $\bC$ is the set of complex numbers. We introduce the three-parameter Mittag-Leffler function $E_{a,b}^q(z)$ (see \cite{Kilbas2006}) defined as
\begin{align}\label{qMitt}
  E_{a,b}^q(z):=\sum_{r=0}^{\infty} \frac{\Gamma(q+r)}{r!\Gamma(q)}\; \frac{z^r}{\Gamma(ar+b)},\quad z\in\bC.  
\end{align}
When $q=1$, \eqref{qMitt} reduces to the two-parameter Mittag-Leffler function (or the Mittag-Leffler type function) $E_{a,b}(z)$ (see \cite{Kilbas2006}) defined as
\begin{align}\label{Mittag2}
    E_{a,b}(z):=\sum_{r=0}^{\infty} \frac{z^r}{\Gamma(ar+b)},\quad \Re a>0,\; b>0,\; z\in\bC.
\end{align}
If $b=1$, then the function $E_{a,1}(z):=E_{a}(z)$ represents the classical Mittag-Leffler function.

From \cite[equation (1.9.13)]{Kilbas2006} there holds
\begin{align}\label{invlap}
    \calL\big(t^{b-1}E_{a,b}^q(-\mu t^{\alpha})\big)=\frac{z^{aq-b}}{(z^\alpha+\mu)^q},
\end{align}
where $ \calL$ denotes the Laplace transform operator.

Let $\varkappa$ be defined as
\begin{align}\label{conA}
\varkappa:=
\frac{1}{2}(\sqrt{1+4\omega^2}-1),
\end{align}
where 
$
\omega(c,\gamma,k)=\omega := \dfrac{c}{2\gamma k},\quad c,\gamma,k>0.
$

Noting that $\ell > \varkappa$ if and only if $\lambda_{\ell} > \omega^2$, where $\lambda_{\ell}$ is defined by \eqref{lam}. 

We define, for $\ell\in\N_0$,
\begin{align}\label{Ml}
    M_{\ell} :=
\begin{cases}
\sqrt{1-\lambda_{\ell}\omega^{-2}}, & \ell\leq \varkappa,\\
\mi\sqrt{\lambda_{\ell}\omega^{-2}-1}, & \ell>\varkappa,
\end{cases}
\end{align}
and for $\ell\neq\varkappa$,
\begin{align}\label{z1z2}
   z_{\ell}^{\pm}(c,\gamma)= z_{\ell}^{\pm}:=\frac{1}{2}c^2\gamma^{-1}(1\pm M_{\ell})\quad \ell\neq\varkappa.
\end{align}
Since $M_\varkappa=0$, then from \eqref{z1z2} we have
\begin{align}\label{za}
    z_{\varkappa}^{\pm}=: z_\varkappa=\frac{1}{2}c^2\gamma^{-1}.
\end{align}

\noindent

The following proposition provides an upper bound for the function $E_{\alpha,\beta}\big(-z_{\ell}^{\pm}t^{\alpha}\big)$, where $z_{\ell}^{\pm}$ are given by \eqref{z1z2}.
Our result generalizes the one obtained in \cite{Leonenko}, offering detailed proof and modifications to the approach used in \cite{Leonenko}.

\begin{prop}\label{BoundE}
    Let $\alpha\in(\frac{1}{2},1]$ and $\beta$ be an arbitrary real number, then for $t\geq0$ there holds
      \begin{numcases}{  | E_{\alpha,\beta}\big(-z_{\ell}^{\pm}t^{\alpha}\big)|\leq}
     C_1\big(1+kct^\alpha\sqrt{\lambda_\ell}\big)^{\frac{1-\beta}{\alpha}}e^{-K_{\ell}\;t\big(k^2c^2\lambda_\ell\big)^{1/2\alpha}}+\dfrac{C_2}{1+kct^\alpha\sqrt{\lambda_\ell}},\;\ \ell>\varkappa, \label{largeA0} \\
     \dfrac{C_3}{1+\big| z_{\ell}^{\pm}\big| t^{\alpha}},\;\;\;\quad\quad\qquad\qquad\qquad\qquad\qquad\qquad\qquad\qquad \ell\leq \varkappa, \label{smallA0}
\end{numcases}
where $\varkappa$ is given by \eqref{conA}, $z_{\ell}^{\pm}$ are given by \eqref{z1z2}, $K_\ell> 0$, and $C_i,\; i=1,2,3,$ are real constants.
\end{prop}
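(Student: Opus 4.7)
My plan is to split the analysis into two regimes distinguished by whether $\ell \leq \varkappa$ or $\ell > \varkappa$, since the character of $z_\ell^{\pm}$ changes completely between them. For $\ell \leq \varkappa$ the quantity $M_\ell \in [0,1]$ is real, hence $z_\ell^{\pm} = \frac{c^2}{2\gamma}(1\pm M_\ell) \geq 0$ is a nonnegative real number and the argument $-z_\ell^{\pm} t^\alpha$ of the Mittag-Leffler function is real and nonpositive. Here the classical bound $|E_{\alpha,\beta}(-x)| \leq C/(1+x)$, valid for $\alpha \in (0,1]$, $\beta \in \R$, $x \geq 0$ (see, e.g., \cite{Podlubny,Kilbas2006}), applies directly; noting $z_\ell^{\pm} = |z_\ell^{\pm}|$ in this regime, this immediately yields \eqref{smallA0}.

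For $\ell > \varkappa$, the quantity $M_\ell = \mi\sqrt{\lambda_\ell\omega^{-2}-1}$ is purely imaginary, so $z_\ell^{\pm}$ are complex conjugates lying in the open right half-plane with modulus $|z_\ell^{\pm}| = kc\sqrt{\lambda_\ell}$ (using $\omega = c/(2\gamma k)$). Setting $w := -z_\ell^{\pm} t^\alpha$, its principal argument satisfies $|\arg w| \in [\pi/2, \pi)$. The core step is to invoke the complex-plane asymptotic expansion of the Mittag-Leffler function (see \cite[Theorem 1.6]{Podlubny}): for a suitable $\mu \in (\alpha\pi/2, \min(\pi,\alpha\pi))$ and $|\arg w|\leq \mu$,
\[
E_{\alpha,\beta}(w) = \frac{1}{\alpha}\,w^{(1-\beta)/\alpha} e^{w^{1/\alpha}} - \sum_{k=1}^{N}\frac{w^{-k}}{\Gamma(\beta-\alpha k)} + O(|w|^{-N-1}).
\]
Because $\alpha \in (\tfrac12, 1]$ forces $\alpha\pi/2 < \pi/2 \leq |\arg w|$ (with strict inequality when $\alpha = 1$ since $\Re z_\ell^{\pm} > 0$), the principal branch of $w^{1/\alpha}$ has strictly negative real part, and I would define $K_\ell := -\cos(\arg w/\alpha) > 0$. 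Substituting the explicit identities $|w|^{1/\alpha} = t(k^2c^2\lambda_\ell)^{1/(2\alpha)}$ and $|w| = kct^\alpha\sqrt{\lambda_\ell}$, the leading term reproduces the first summand in \eqref{largeA0} (after replacing $|w|^{(1-\beta)/\alpha}$ by $(1+|w|)^{(1-\beta)/\alpha}$, at the cost of a constant, to absorb the small-$|w|$ regime) while the remainder produces the second summand.

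The principal obstacle is uniformity in $\ell$ and $t$: the expansion above is naturally an $|w|\to\infty$ statement, so upgrading it to a bound valid for all $t\geq 0$ and every admissible $\ell$, with constants $C_1,C_2$ independent of $\ell$, requires care. One must also handle the boundary regime $|\arg w| > \alpha\pi$, which can occur for $\ell$ just above $\varkappa$ when $\alpha<1$, by invoking the second, purely algebraic branch of Podlubny's expansion; fortunately the resulting bound $C/(1+|w|)$ is already dominated by \eqref{largeA0}, since the exponential prefactor there is nonnegative (equivalently one may take $C_1=0$ in that sub-regime, or simply choose any positive $K_\ell$). Verifying strict positivity of $K_\ell$ for every $\ell>\varkappa$ and patching the two sub-regimes together into a single estimate valid for all $t\geq 0$ constitutes the bulk of the technical work, closely following and refining the strategy in \cite{Leonenko}.
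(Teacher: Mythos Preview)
Your proposal is correct and follows essentially the same route as the paper: split into $\ell\le\varkappa$ (real $z_\ell^{\pm}\ge 0$, apply the algebraic Mittag--Leffler bound) and $\ell>\varkappa$ (complex conjugate $z_\ell^{\pm}$ with $|z_\ell^{\pm}|=kc\sqrt{\lambda_\ell}$, apply Podlubny's Theorems~1.5--1.6 and read off $K_\ell=-\cos(\arg w/\alpha)>0$). You are in fact slightly more careful than the paper on one point: for $\ell$ just above $\varkappa$ the argument of $w=-z_\ell^{\pm}t^\alpha$ may exceed any fixed $\mu<\alpha\pi$, so Theorem~1.5 is not directly available there; your patching via the algebraic bound (Theorem~1.6), absorbed into the second summand of \eqref{largeA0}, is exactly the right fix, and the paper's proof glosses over this sub-regime.
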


\begin{proof}
Proposition \ref{BoundE} follows from the following upper bounds (see \cite{Podlubny}, Theorems 1.5 and 1.6), for $a<2$, $b\in\R$, and $\theta$ satisfying $a\pi/2<\theta<\min\{\pi,a\pi\}$, 
\begin{numcases}{ | E_{a,b}(-z)|\leq}
 C_1(1+|z|)^{\frac{1-b}{a}}e^{\Re z^{1/a}}+ \dfrac{C_2}{1+\vert z\vert},\quad |\Arg z|\leq\theta,\; |z|\geq0, \label{largeA} \\
    \dfrac{C_3}{1+\vert z\vert},\qquad\qquad\qquad\qquad\qquad\qquad \theta\leq|\Arg z|\leq\pi,\; |z|\geq0, \label{smallA}
\end{numcases}
where $C_i$, $i=1,2,3$, are real constants.

\indent We seek expressions for $ E_{\alpha,\beta}\big(-z_{\ell}^{\pm}t^{\alpha}\big)$. Using \eqref{z1z2}, then for $\ell>\varkappa$, we write
\begin{align*}
   z_{\ell}^{\pm}&=\frac{1}{2}c^2\gamma^{-1}(1\pm M_{\ell})= \omega ck \Big(1\pm\mi \sqrt{\lambda_\ell \omega^{-2}-1}\Big).
\end{align*}
Thus, for $\ell>\varkappa$ we have
\begin{align}\label{absz}
  \vert -z_{\ell}^{\pm}\vert=\omega ck \sqrt{\lambda_\ell \omega^{-2}}=kc\sqrt{\lambda_\ell}.
\end{align}
Now we need to analyse the term $|\Arg z_{\ell}^{\pm}|$. Using \eqref{z1z2} we can write  
\begin{align}\label{Argz}
    \vert\Arg z_{\ell}^{\pm}\vert&=\Big\vert\mp\Big(\pi-\arctan \sqrt{\lambda_\ell \omega^{-2}-1}\Big)\Big\vert
    =\pi-\arctan \sqrt{\lambda_\ell \omega^{-2}-1}.
\end{align}
Using \eqref{Argz}, then $|\Arg z_{\ell}^{\pm}|\leq\alpha\pi$ if and only if
$\dfrac{\lambda_\ell}{\omega^2}>\sec^{2}((1-\alpha)\pi)\geq1$, since $\frac{1}{2}<\alpha\leq1$.

Also, if $\lambda_\ell\leq \omega^2$, then the terms $z_{\ell}^{\pm}$ are real constants and hence $|\Arg(- z_{\ell}^{\pm}t^{\alpha})|=\pi$.
This means for $\ell>\varkappa$ we use the upper bound given by \eqref{largeA}, while for $\ell\leq\varkappa$ we use \eqref{smallA}. Thus, if $\ell\leq \varkappa$ we have, using \eqref{smallA},
\[
| E_{\alpha,\beta}\big(-z_{\ell}^{\pm}t^{\alpha}\big)|\leq \dfrac{C_3}{1+\vert z_{\ell}^{\pm}\vert t^{\alpha}}.
\]
To complete the proof we need an expression for $\Re (-z_{\ell}^{\pm}t^{\alpha})^{1/\alpha}$, $\ell>\varkappa$. Using \eqref{absz} and \eqref{Argz} we get
\begin{align}\label{Rez}
    \Re (-z_{\ell}^{\pm}t^{\alpha})^{1/\alpha}&=\Re\Big(|-z_{\ell}^{\pm}t^{\alpha}|e^{\mi\Arg (z_{\ell}^{\pm}t^{\alpha})}\Big)^{1/\alpha}\notag\\
    &= \Re\Bigg(\Big(kct^\alpha\sqrt{\lambda_\ell} \Big)^{1/\alpha}e^{\mi\Arg (z_{\ell}^{\pm}/\alpha})\Bigg)\notag\\
    &=t\big(k^2c^2\lambda_\ell\big)^{1/2\alpha} \cos \dfrac{\Arg z_{\ell}^{\pm}}{\alpha} =-K_{\ell}t\big(k^2c^2\lambda_\ell\big)^{1/2\alpha},
\end{align}
where $K_{\ell}=-\cos \dfrac{\Arg z_{\ell}^{\pm}}{\alpha}$.

Using \eqref{Argz}, we have $\frac{\pi}{2}<\Arg z_{\ell}^{\pm}\leq\alpha\pi$ and $\dfrac{\pi}{2\alpha}<\dfrac{\Arg z_{\ell}^{\pm}}{\alpha}\leq\pi$ for $\alpha\in(\frac{1}{2},1]$, ensuring 
$K_{\ell}>0$.
Thus, for $\ell>\varkappa$, by applying \eqref{Rez} and \eqref{absz} with \eqref{largeA}, gives 
\[
 | E_{\alpha,\beta}\big(-z_{\ell}^{\pm}t^{\alpha}\big)|\leq C_1\big(1+kct^\alpha\sqrt{\lambda_\ell}\big)^{\frac{1-\beta}{\alpha}}e^{-K_{\ell}\;t\big(k^2c^2\lambda_\ell\big)^{1/2\alpha}}+\dfrac{C_2}{1+kct^\alpha\sqrt{\lambda_\ell}},
\]
which completes the proof.
\end{proof}

Let for $\alpha\in(0.5,1]$ and $\ell\in\N_0$,
\begin{align}\label{def_psinew}
	    	\psi_{\ell,\alpha}( t)&:= \calL^{-1}\Big\{ \frac{ 1 }{ \frac{1}{c^2}z^{2\alpha} +\frac{1}{\gamma}z^{\alpha}+  k^2\lambda_\ell}\Big\},
	\end{align}
where $\calL^{-1}$ is the inverse Laplace transform operator. 
 
\begin{lem}\label{pf:psi}
For $\alpha\in(0.5,1]$, $\ell\in\N_0$, and $t>0$, the $\psi_{\ell,\alpha}(t)$, defined by \eqref{def_psinew}, takes the form 
\begin{align}
 \psi_{\ell,\alpha}(t):=
\begin{cases}\label{psi}
\dfrac{\gamma}{M_\ell}t^{\alpha-1}\left[E_{\alpha,\alpha}\big(-z_{\ell}^{-}t^{\alpha}\big)-E_{\alpha,\alpha}\big(-z_{\ell}^{+}t^{\alpha}\big)\right], & \ell\neq \varkappa, \\
		\dfrac{c^2}{\alpha}t^{2\alpha-1}\big[E_{\alpha,2\alpha-1}(-z_\varkappa t^\alpha)-(1-\alpha)E_{\alpha,2\alpha}(-z_\varkappa t^\alpha)\big], & \ell=\varkappa.
	\end{cases}
\end{align}
\end{lem}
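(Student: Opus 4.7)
The plan is to realize $\psi_{\ell,\alpha}$ as the inverse Laplace transform of a rational function in $z^\alpha$, factor the denominator through $z_\ell^\pm$, and invert term-by-term using the identity \eqref{invlap}.

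First I would pull out the $1/c^2$ factor and view the denominator as a quadratic in $z^\alpha$. Using the quadratic formula together with the definitions \eqref{Ml} and \eqref{z1z2} (and noting $\omega^2=c^2/(4\gamma^2 k^2)$), its two roots are precisely $-z_\ell^{\mp}$, so
\[
\tfrac{1}{c^2}z^{2\alpha}+\tfrac{1}{\gamma}z^{\alpha}+k^2\lambda_\ell=\tfrac{1}{c^2}(z^{\alpha}+z_\ell^-)(z^{\alpha}+z_\ell^+).
\]
For $\ell\ne\varkappa$ the roots are distinct with $z_\ell^+-z_\ell^-=c^2M_\ell/\gamma$, so partial fractions give
\[
\tfrac{c^2}{(z^\alpha+z_\ell^-)(z^\alpha+z_\ell^+)}=\tfrac{\gamma}{M_\ell}\Bigl[\tfrac{1}{z^\alpha+z_\ell^-}-\tfrac{1}{z^\alpha+z_\ell^+}\Bigr].
\]
Applying \eqref{invlap} with $q=1$ and $b=\alpha$ to each simple pole yields $\calL^{-1}\{(z^\alpha+z_\ell^\pm)^{-1}\}=t^{\alpha-1}E_{\alpha,\alpha}(-z_\ell^\pm t^\alpha)$, and assembling these produces the first branch of \eqref{psi} directly.

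The case $\ell=\varkappa$ is where the real work lies. Since $M_\varkappa=0$, the roots coincide and the denominator degenerates to the double pole $(z^\alpha+z_\varkappa)^2/c^2$. I would invert this via \eqref{invlap} at $q=2$ and $b=2\alpha$, obtaining $\calL^{-1}\{(z^\alpha+z_\varkappa)^{-2}\}=t^{2\alpha-1}E_{\alpha,2\alpha}^{2}(-z_\varkappa t^\alpha)$, expressed through the three-parameter Prabhakar function defined in \eqref{qMitt}. The remaining task is to reduce $E^{2}_{\alpha,2\alpha}$ to the two-parameter functions $E_{\alpha,2\alpha-1}$ and $E_{\alpha,2\alpha}$ that appear in the statement; I would do this by expanding both sides as power series in $-z_\varkappa t^\alpha$ and using the gamma recurrence $\Gamma(\alpha r+2\alpha)=(\alpha r+2\alpha-1)\Gamma(\alpha r+2\alpha-1)$, so that the coefficients match term-by-term.

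The main obstacle is precisely this last algebraic reduction from the Prabhakar function to the two classical Mittag-Leffler functions; the rest is a routine partial-fraction/inverse-Laplace computation. As a cross-check for the $\ell=\varkappa$ branch, one can instead take the limit $M_\ell\to 0$ in the first branch of \eqref{psi}: by L'H\^opital this amounts to the derivative in $\mu$ of $t^{\alpha-1}E_{\alpha,\alpha}(-\mu t^\alpha)$ evaluated at $\mu=z_\varkappa$, which must coincide with the Prabhakar route. Throughout, $\Re(z_\ell^\pm)\ge c^2/(2\gamma)>0$ for every $\ell$, which ensures that \eqref{invlap} is applicable and the inverse transforms are unambiguously defined for $t>0$.
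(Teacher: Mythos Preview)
Your argument is correct and takes a genuinely different route from the paper. The paper does not factor the quadratic in $z^\alpha$; instead it expands the reciprocal as a power series (via \cite[eq.~(5.24)]{Podlubny}), inverts term by term to obtain the representation $c^2 t^{2\alpha-1}\sum_{r\ge0}(-c^2k^2\lambda_\ell t^{2\alpha})^r E_{\alpha,2\alpha(r+1)}^{r+1}(-c^2\gamma^{-1}t^\alpha)$, and only then collapses the infinite sum using a summation theorem of Soubhia et al.\ \cite{Soubhia2010} before applying the recurrence \eqref{Mitpro} to reach the first branch of \eqref{psi}. Your partial-fraction approach is more elementary: it avoids the detour through an infinite Prabhakar series and the external summation theorem, and lands on the $\ell\ne\varkappa$ formula in one step. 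For $\ell=\varkappa$ both routes arrive at $c^2 t^{2\alpha-1}E^2_{\alpha,2\alpha}(-z_\varkappa t^\alpha)$; the paper then cites a relation from \cite{Gorenflo2014} for the reduction to two-parameter functions, while your direct coefficient comparison via $\Gamma(\alpha r+2\alpha)=(\alpha r+2\alpha-1)\Gamma(\alpha r+2\alpha-1)$ is in effect a self-contained proof of that same identity. One minor correction: the claim $\Re(z_\ell^\pm)\ge c^2/(2\gamma)$ fails for real $z_\ell^-$ when $\ell\le\varkappa$ (in particular $z_0^-=0$); what you actually need, and what does hold, is $\Re(z_\ell^\pm)\ge 0$, which already suffices for the applicability of \eqref{invlap}.
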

\begin{proof}
    In order to give an explicit form for $\psi_{\ell,\alpha}$, we first write \eqref{def_psinew} as 
    \[
  \psi_{\ell,\alpha}( t)=c^2\calL^{-1}\Big\{ \frac{ 1 }{ z^{2\alpha} +c^2\gamma^{-1}z^{\alpha}+  c^2k^2\lambda_\ell}\Big\}.
    \]
Using \cite[equation (5.24)]{Podlubny} we write
	\begin{align*}
	    \frac{ 1 }{ z^{2\alpha} +c^2\gamma^{-1}z^{\alpha}+  c^2k^2\lambda_\ell}=\sum_{r=0}^{\infty}(-c^2k^2\lambda_\ell)^{r}\frac{z^{-\alpha(r+1)}}{(z^{\alpha}+c^2\gamma^{-1})^{r+1}}. 
	\end{align*}
Now, for $\ell\neq\varkappa$, applying equation \eqref{invlap} gives
\begin{align}\label{ER}
    \psi_{\ell,\alpha}( t)
    &=c^2t^{2\alpha-1}\sum_{r=0}^{\infty}(-c^2k^2\lambda_\ell t^{2\alpha})^{r}E_{\alpha,2\alpha(r+1)}^{r+1}\big(-c^2\gamma^{-1}t^{\alpha}\big).
\end{align}

By solving  
\begin{align}\label{z1andz2}
    z_1(t)z_2(t)=c^2k^2\lambda_\ell t^{2\alpha},\quad z_1(t)+z_2(t)=-c^2\gamma^{-1}t^{\alpha}
\end{align}
for $z_1(t)$ and $z_2(t)$, we obtain 
$z_1(t)=-z_{\ell}^{-}t^{\alpha}$ and $z_2(t)=-z_{\ell}^{+}t^{\alpha}$, where $z_{\ell}^{\pm}$, $\ell\neq \varkappa$, are defined by \eqref{z1z2}. Thus by \cite[Theorem 3.2]{Soubhia2010}, we can write \eqref{ER} as
\begin{align*}
    \psi_{\ell,\alpha}( t)&=c^2t^{2\alpha-1}\left[\bigg(\frac{1+M_\ell}{2M_\ell}\bigg) E_{\alpha,2\alpha}\big(-z_{\ell}^{+}t^{\alpha}\big)-\bigg(\frac{1-M_\ell}{2M_\ell}\bigg) E_{\alpha,2\alpha}\big(z_{\ell}^{-}t^{\alpha}\big)\right]\notag\\
    &= c^2\frac{t^{2\alpha-1}}{2M_\ell}\left[(1+M_\ell)E_{\alpha,2\alpha}\big(-z_{\ell}^{+}t^{\alpha}\big)-(1-M_\ell)E_{\alpha,2\alpha}\big(-z_{\ell}^{-}t^{\alpha}\big)\right]\notag\\
    &=c^2\frac{t^{2\alpha-1}}{2M_\ell}\left[-\frac{2\gamma}{c^2t^\alpha}E_{\alpha,\alpha}\big(-z_{\ell}^{+}t^{\alpha}\big)+\frac{2\gamma}{c^2t^\alpha}E_{\alpha,\alpha}\big(-z_{\ell}^{-}t^{\alpha}\big)\right]\notag\\
    &= \frac{\gamma}{M_\ell}t^{\alpha-1}\left[E_{\alpha,\alpha}\big(-z_{\ell}^{-}t^{\alpha}\big)-E_{\alpha,\alpha}\big(-z_{\ell}^{+}t^{\alpha}\big)\right],
\end{align*}
where the third step used the relation \cite[equation (5.1.13)]{Gorenflo2014}
\begin{align}\label{Mitpro}
    zE_{a,b}(z)=E_{a,b-a}(z)-\frac{1}{\Gamma(b-a)},\quad a,b>0,\ (a-b)\notin\N_0.
\end{align}
Now let $\ell=\varkappa$, then by using equation \eqref{ER} with \cite[Corollary 3.3]{Soubhia2010} and \cite[relation (5.1.14)]{Gorenflo2014} we can write $\psi_{\varkappa,\alpha}$ as
    \begin{align*}
     \psi_{\varkappa,\alpha}(t)&= c^2t^{2\alpha-1}(E_{\alpha,\alpha}(-z_\varkappa t^{\alpha})-z_\varkappa t^{\alpha}E_{\alpha,3\alpha}^{2}(-z_\varkappa t^{\alpha}))\\
     &= c^2t^{2\alpha-1}E_{\alpha,2\alpha}^{2}(-z_\varkappa t^{\alpha})\\
     &=\frac{1}{\alpha} c^2t^{2\alpha-1}\big(E_{\alpha,2\alpha-1}(-z_\varkappa t^\alpha)-(1-\alpha)E_{\alpha,2\alpha}(-z_\varkappa t^\alpha)\big),
    \end{align*}
    where $E_{a,b}^{q}(\cdot)$ is defined by \eqref{qMitt}). Thus the proof is complete.
\end{proof}

\indent
We define the function $F_{\ell,\alpha}(t)$, $t\geq0$, $\ell\in\N_0$, as
\begin{align}\label{Fl}
   F_{\ell,\alpha}(t):=
   \begin{cases}
       F_{\ell,\alpha}^{(1)}(t)+F_{\ell,\alpha}^{(2)}(t),& \ell\neq \varkappa,\\
       E_{\alpha}\big(-z_\varkappa t^{\alpha}\big)\big(1+\frac{1}{2\alpha}c^2\gamma^{-1}t^{\alpha}\big),& \ell=\varkappa,
   \end{cases}
\end{align}
where
\begin{align}\label{F1}
   F_{\ell,\alpha}^{(1)}(t):=\frac{1}{2}\left[E_{\alpha}\big(-z_{\ell}^{-}t^{\alpha}\big)+E_{\alpha}\big(-z_{\ell}^{+}t^{\alpha}\big)\right],
\end{align}
\begin{align}\label{F2}
  F_{\ell,\alpha}^{(2)}(t):=\frac{1}{2M_{\ell}}\left[E_{\alpha}\big(-z_{\ell}^{-}t^{\alpha}\big)-E_{\alpha}\big(-z_{\ell}^{+}t^{\alpha}\big)\right],
\end{align}
and $z_{\ell}^{\pm}$, $z_\varkappa$, are given in \eqref{z1z2} and  \eqref{za} respectively.

We prove the following lemma.
\begin{lem}
    Let $\alpha\in(\frac{1}{2},1]$. Then for $t\geq0$ and $\ell\in\N_{0}$ there holds
\begin{align}\label{NewUp}
     \dfrac{1}{c^2}{^C}D_t^{2\alpha} F_{\ell,\alpha}(t)+\dfrac{1}{\gamma} {^C}D_t^{\alpha} F_{\ell,\alpha}&=-k^2\lambda_\ell F_{\ell,\alpha}(t),
\end{align}
    where $F_{\ell,\alpha}(\cdot)$ is defined by \eqref{Fl}.
\end{lem}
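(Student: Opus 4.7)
The plan is to verify \eqref{NewUp} via the Laplace transform, exploiting the explicit Laplace transforms of Mittag--Leffler functions given in \eqref{invlap}. The strategy is threefold: (i) verify the initial data $F_{\ell,\alpha}(0) = 1$ and $F'_{\ell,\alpha}(0) = 0$; (ii) compute $\calL\{F_{\ell,\alpha}\}(z)$ in closed form; (iii) apply $\calL$ to both sides of \eqref{NewUp} using the standard Caputo rules and reduce to an algebraic identity, which inverts by injectivity of $\calL$.

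For the nondegenerate case $\ell \neq \varkappa$, I would first establish the initial data by a term-by-term expansion of the Mittag--Leffler series: the constant term gives $F_{\ell,\alpha}(0) = 1$, while the $t^\alpha$-coefficient cancels exactly between $F_{\ell,\alpha}^{(1)}$ and $F_{\ell,\alpha}^{(2)}$ thanks to the Vieta identities $z_\ell^+ + z_\ell^- = c^2/\gamma$ and $z_\ell^+ - z_\ell^- = c^2 M_\ell/\gamma$ (both derivable from \eqref{z1andz2}). Hence $F_{\ell,\alpha}(t) = 1 + O(t^{2\alpha})$ near $0$, so $F'_{\ell,\alpha}(0) = 0$ classically (since $2\alpha > 1$), and $F''_{\ell,\alpha}$ is locally integrable, ensuring ${^C}D_t^{2\alpha} F_{\ell,\alpha}$ is classically defined. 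Next, \eqref{invlap} with $q = b = 1$ gives $\calL\{E_\alpha(-z_\ell^\pm t^\alpha)\}(z) = z^{\alpha-1}/(z^\alpha + z_\ell^\pm)$; assembling $F_{\ell,\alpha}^{(1)} + F_{\ell,\alpha}^{(2)}$ over a common denominator and using $z_\ell^+ z_\ell^- = c^2 k^2 \lambda_\ell$ with the Vieta sum produces
\[
\calL\{F_{\ell,\alpha}\}(z) = \frac{z^{\alpha-1}(z^\alpha + c^2/\gamma)}{z^{2\alpha} + (c^2/\gamma)\,z^\alpha + c^2 k^2 \lambda_\ell}.
\]
Finally, applying $\calL\{{^C}D_t^\beta f\}(z) = z^\beta \calL\{f\}(z) - \sum_{j=0}^{\lceil\beta\rceil - 1} z^{\beta-j-1} f^{(j)}(0)$ with $\beta = \alpha, 2\alpha$ and substituting the initial data reduces the Laplace transform of the LHS of \eqref{NewUp} to $\calL\{F_{\ell,\alpha}\}(z)\!\left(\tfrac{z^{2\alpha}}{c^2} + \tfrac{z^\alpha}{\gamma}\right) - \tfrac{z^{2\alpha-1}}{c^2} - \tfrac{z^{\alpha-1}}{\gamma}$, which a short algebraic simplification using the closed form above collapses to $-k^2\lambda_\ell\, \calL\{F_{\ell,\alpha}\}(z)$; inverting $\calL$ then yields \eqref{NewUp}.

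The main obstacle is the degenerate case $\ell = \varkappa$, where $M_\varkappa = 0$ and the characteristic polynomial factors as $z^{2\alpha} + (c^2/\gamma)\,z^\alpha + c^2 k^2 \lambda_\varkappa = (z^\alpha + z_\varkappa)^2$ with a double root, so the derivation via $F_{\ell,\alpha}^{(1)} + F_{\ell,\alpha}^{(2)}$ is indeterminate. For this case I would compute $\calL\{F_{\varkappa,\alpha}\}$ directly from the separate expression in \eqref{Fl}: the $E_\alpha(-z_\varkappa t^\alpha)$ piece contributes $z^{\alpha-1}/(z^\alpha + z_\varkappa)$ via \eqref{invlap}, while the $t^\alpha$-weighted piece is handled using the three-parameter Laplace pair \eqref{invlap} with $q = 2$ (equivalently, parameter-differentiation of the single-parameter transform with respect to $z_\varkappa$). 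The identity that makes the algebra cohere is $z_\varkappa^2 = c^2 k^2 \lambda_\varkappa$, which follows from $\lambda_\varkappa = \varkappa(\varkappa+1) = \omega^2$ (by \eqref{conA}) together with $z_\varkappa = c^2/(2\gamma)$ (by \eqref{za}); this makes the Laplace-side verification parallel to the nondegenerate case, once the initial-value checks $F_{\varkappa,\alpha}(0) = 1$ and $F'_{\varkappa,\alpha}(0) = 0$ are performed analogously by Mittag--Leffler series expansion.
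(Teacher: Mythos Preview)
Your proof is correct and takes a genuinely different route from the paper. The paper works entirely in the time domain: it applies the eigenfunction identity ${^C}D_t^{\alpha}E_\alpha(-zt^\alpha)=-zE_\alpha(-zt^\alpha)$ term by term to $F_{\ell,\alpha}^{(1)}$ and $F_{\ell,\alpha}^{(2)}$, then iterates via the sequential Caputo derivative to get ${^C}D_t^{2\alpha}$, and finally combines the pieces through a somewhat delicate algebraic manipulation with the factors $(1\pm M_\ell)/(2M_\ell)$ and the identity $z_\ell^+z_\ell^-=c^2k^2\lambda_\ell$. Your Laplace-side argument collapses all of that into a single rational-function identity after you have the closed form $\calL\{F_{\ell,\alpha}\}(z)=z^{\alpha-1}(z^\alpha+c^2/\gamma)/\bigl(z^{2\alpha}+(c^2/\gamma)z^\alpha+c^2k^2\lambda_\ell\bigr)$, which is cleaner and also makes transparent why the nondegenerate and degenerate cases must yield the same equation (the Laplace transform is the same rational function, only its factorization changes).

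What each approach buys: the paper's direct computation avoids invoking Laplace-transform machinery and the associated justification (growth bounds, injectivity), and it never needs the initial-value check $F_{\ell,\alpha}'(0)=0$ since the sequential derivative circumvents it. Your approach, by contrast, is more systematic and handles the degenerate case $\ell=\varkappa$ on equal footing with the rest, whereas the paper simply says that case ``can be handled similarly'' without detail. One small caveat in your sketch: the remark about obtaining $\calL\{t^\alpha E_\alpha(-z_\varkappa t^\alpha)\}$ by ``parameter-differentiation with respect to $z_\varkappa$'' actually produces $t^\alpha E_\alpha'(-z_\varkappa t^\alpha)$ rather than $t^\alpha E_\alpha(-z_\varkappa t^\alpha)$; the cleanest fix is to note that the paper's own derivation \eqref{FA} already shows $F_{\varkappa,\alpha}(t)=E_\alpha(-z_\varkappa t^\alpha)+z_\varkappa t^\alpha E_{\alpha,\alpha+1}^{2}(-z_\varkappa t^\alpha)$ in an intermediate form, and then \eqref{invlap} with $q=2$, $b=\alpha+1$ gives the Laplace transform of the second term directly as $z_\varkappa z^{\alpha-1}/(z^\alpha+z_\varkappa)^2$, from which your claimed closed form follows immediately.
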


\begin{proof}
Using \eqref{F1} and \eqref{F2} with the help of \eqref{Dcaputo}, we have, for $\ell\neq \varkappa$,
\begin{align}\label{dervF1}
 {^C}D_t^{\alpha} F_{\ell,\alpha}^{(1)}(t)=-\dfrac{1}{2}\Big(z_{\ell}^{-} E_{\alpha}\big(-z_{\ell}^{-}t^{\alpha}\big)+z_{\ell}^{+} E_{\alpha}\big(-z_{\ell}^{+}t^{\alpha}\big)\Big)   
\end{align}
and
\begin{align}\label{dervF2}
   {^C}D_t^{\alpha}F_{\ell,\alpha}^{(2)}(t)=-\dfrac{1}{2M_\ell}\Big(z_{\ell}^{-} E_{\alpha}\big(-z_{\ell}^{-}t^{\alpha}\big)-z_{\ell}^{+} E_{\alpha}\big(-z_{\ell}^{+}t^{\alpha}\big)\Big). 
\end{align}
By the sequential fractional derivative  \cite[see equation (2.170)]{Podlubny}, we obtain
\begin{align}\label{D2alphaF1}
 {^C}D_t^{2\alpha} F_{\ell,\alpha}^{(1)}(t)=\dfrac{1}{2}\Big((z_{\ell}^{-})^2 E_{\alpha}\big(-z_{\ell}^{-}t^{\alpha}\big)+(z_{\ell}^{+})^2 E_{\alpha}\big(-z_{\ell}^{+}t^{\alpha}\big)\Big)   
\end{align}
and
\begin{align}\label{D2alphaF2}
{^C}D_t^{2\alpha}F_{\ell,\alpha}^{(2)}(t)=\dfrac{1}{2M_\ell}\Big((z_{\ell}^{-})^2 E_{\alpha}\big(-z_{\ell}^{-}t^{\alpha}\big)-(z_{\ell}^{+})^2 E_{\alpha}\big(-z_{\ell}^{+}t^{\alpha}\big)\Big). 
\end{align}

From \eqref{F1} and \eqref{F2} with the help of \eqref{dervF1}-\eqref{D2alphaF2} we obtain, for $\ell\neq\varkappa$,
\begin{align}\label{Part1}
  &\dfrac{1}{c^2}{^C}D_t^{2\alpha} F_{\ell,\alpha}(t)= \dfrac{1}{c^2}\Big({^C}D_t^{2\alpha} F_{\ell,\alpha}^{(1)}(t)+{^C}D_t^{2\alpha} F_{\ell,\alpha}^{(2)}(t)\Big)\notag\\
  &=\dfrac{1}{2} c^{-2}\Big( (z_{\ell}^{-})^2E_{\alpha}\big(-z_{\ell}^{-}t^{\alpha}\big)\Big(1+\dfrac{1}{M_\ell}\Big)+(z_{\ell}^{+})^2E_{\alpha}\big(-z_{\ell}^{+}t^{\alpha}\big)\Big(1-\dfrac{1}{M_\ell}\Big)\Big)
\end{align}
and
\begin{align}\label{Part2}
&\dfrac{1}{\gamma} {^C}D_t^{\alpha} F_{\ell,\alpha}=\dfrac{1}{\gamma}\Big({^C}D_t^{\alpha} F_{\ell,\alpha}^{(1)}(t)+{^C}D_t^{\alpha} F_{\ell,\alpha}^{(2)}(t)\Big)\notag\\
&=-\dfrac{1}{2\gamma}\Big( z_{\ell}^{-}E_{\alpha}\big(-z_{\ell}^{-}t^{\alpha}\big)\Big(1+\dfrac{1}{M_\ell}\Big)+z_{\ell}^{+}E_{\alpha}\big(-z_{\ell}^{+}t^{\alpha}\big)\Big(1-\dfrac{1}{M_\ell}\Big)\Big).
\end{align}
Summing up equations \eqref{Part1} and \eqref{Part2}, we obtain
\begin{align}\label{FulD}
    \dfrac{1}{c^2}{^C}D_t^{2\alpha} F_{\ell,\alpha}(t)+\dfrac{1}{\gamma} {^C}D_t^{\alpha} F_{\ell,\alpha}&=E_{\alpha}\big(-z_{\ell}^{-}t^{\alpha}\big)\Big(\dfrac{1+M_\ell}{2M_\ell}\Big)\Big(c^{-2}(z_{\ell}^{-})^2-\gamma^{-1}z_{\ell}^{-}\Big)\notag\\
    &+E_{\alpha}\big(-z_{\ell}^{+}t^{\alpha}\big)\Big(\dfrac{M_\ell-1}{2M_\ell}\Big)\Big(c^{-2}(z_{\ell}^{+})^2-\gamma^{-1}z_{\ell}^{+}\Big).
\end{align}
By \eqref{z1z2}, we have $1\pm M_\ell=2c^{-2}\gamma z_{\ell}^{\pm}$. Thus \eqref{FulD} becomes
\begin{align}\label{FulD2}
    &\dfrac{1}{c^2}{^C}D_t^{2\alpha} F_{\ell,\alpha}(t)+\dfrac{1}{\gamma} {^C}D_t^{\alpha} F_{\ell,\alpha}=2c^{-2}z_{\ell}^{+}z_{\ell}^{-}\gamma\notag\\
    &\times\Big(\dfrac{E_{\alpha}\big(-z_{\ell}^{-}t^{\alpha}\big)}{2M_\ell}\Big(\dfrac{\gamma z_{\ell}^{-}}{c^2}-1\Big)-\dfrac{E_{\alpha}\big(-z_{\ell}^{+}t^{\alpha}\big)}{2M_\ell}\Big(\dfrac{\gamma z_{\ell}^{+}}{c^2}-1\Big)\Big).
\end{align}
Using \eqref{absz}, we obtain  $z_{\ell}^{-}z_{\ell}^{+}=|z_{\ell}^{\pm}|^2=c^2k^2\lambda_\ell$. Thus, the result follows by simplifying equation \eqref{FulD2}.

The case $\varkappa\in\N$ can be handled similarly. However, one needs to employ the corresponding definition of $F_{\ell,\alpha}$ given by \eqref{Fl}. Thus, the proof is complete.
\end{proof}

Note that by the upper bounds \cite[(24) and (25)]{Leonenko} we can show that
\begin{align}\label{F_upper}
    \big\vert F_{\ell,\alpha}(t) \big\vert \le \dfrac{1}{\sqrt{\lambda_\ell}}\mathcal{H}(t),\quad \ell\geq1,
\end{align}
where for some real constants $C_0^{\prime},C_1^{\prime},C_2^{\prime}>0$, the function $\mathcal{H}(t)$ is given by
\begin{align}\label{updf1} 
\mathcal{H}(t):= C_0^{\prime}+C_1^{\prime}t^{-1}+C_2^{\prime}t^{-\alpha},\quad t>0.
\end{align}

Thus using \eqref{NewUp} with \eqref{F_upper} we obtain
\begin{align}\label{dF1}
     \big\vert \dfrac{1}{c^2}{^C}D_t^{2\alpha}F_{\ell,\alpha}^{(1)}(t)+\dfrac{1}{\gamma}{^C}D_t^{\alpha} F_{\ell,\alpha}(t)\big\vert \leq \sqrt{\lambda_\ell}\mathcal{H}(t), \quad\ell\in\N.
\end{align}

\begin{prop}\label{Propsigma}
For $\ell\in\N_{0}$, $t\geq0$, and $\alpha\in(\frac{1}{2},1]$, let $\sigma_{\ell,t,\alpha}^2$ be defined as
\begin{align}\label{var}
\sigma_{\ell,t,\alpha}^2:=
\begin{cases} 
	\dfrac{\gamma^2}{|M_\ell|^2}	\int_{0}^{t} r^{2\alpha-2}\big\vert E_{\alpha,\alpha}\big(-z_{\ell}^{-}r^{\alpha}\big)-E_{\alpha,\alpha}\big(-z_{\ell}^{+}r^{\alpha}\big)\big\vert^2dr, & \ell\neq \varkappa, \\
	\dfrac{c^4}{\alpha^2}\int_{0}^{t} r^{4\alpha-2}\big\vert E_{\alpha,2\alpha-1}\big(-z_{\varkappa}r^{\alpha}\big)-(1-\alpha)E_{\alpha,2\alpha}\big(-z_{\varkappa}r^{\alpha}\big)\big\vert^2dr, & \ell=\varkappa.
	\end{cases}
\end{align}
Then there holds
\begin{numcases}{ \sigma_{\ell,t,\alpha}^2\leq}
	\dfrac{C_1^I}{\vert M_\ell\vert^2}\big| z_{\ell}^{-}\big|^{\frac{1}{\alpha}-2},\qquad \ell< \varkappa, \label{up1} \\
C_2^It^{4\alpha-1},\;\;\qquad\qquad \ell=\varkappa,\label{Aequal}\\
   	C_3^I \lambda_\ell^{\frac{1-\alpha}{\alpha}},\ \qquad\qquad\; \ell> \varkappa, \label{up2}
\end{numcases}
where $z_{\ell}^{\pm}$ are given by \eqref{z1z2} and the real constants $C_i^I>0$, $i=1,2,3$, do not depend on $\ell$.
\end{prop}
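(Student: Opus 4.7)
The plan is to substitute the Mittag-Leffler bounds from Proposition~\ref{BoundE} into the integrand of \eqref{var}, apply the triangle inequality together with $(a+b)^2 \le 2(a^2+b^2)$ to separate the contributions, and then evaluate each resulting integral by a case-dependent substitution that scales the $\lambda_\ell$-dependence out of the integration range. The three cases follow from whether $z_\ell^{\pm}$ are distinct positive reals ($\ell<\varkappa$), coincide at $z_\varkappa$ ($\ell=\varkappa$), or form complex conjugates ($\ell>\varkappa$).

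For $\ell<\varkappa$ only bound \eqref{smallA0} applies, and since $0<z_\ell^- \le z_\ell^+$ the triangle inequality gives $|E_{\alpha,\alpha}(-z_\ell^{-}r^\alpha)-E_{\alpha,\alpha}(-z_\ell^{+}r^\alpha)|^2 \le 4C_3^2(1+z_\ell^{-}r^\alpha)^{-2}$. The substitution $u = z_\ell^{-}r^\alpha$ in \eqref{var} transforms the integral into $\alpha^{-1}(z_\ell^{-})^{1/\alpha-2}$ times $\int_0^{z_\ell^{-}t^\alpha} u^{(\alpha-1)/\alpha}(1+u)^{-2}\,du$, which is dominated by the convergent tail $\int_0^\infty u^{(\alpha-1)/\alpha}(1+u)^{-2}\,du$ for $\alpha\in(\tfrac12,1]$; this yields \eqref{up1}. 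For $\ell=\varkappa$ the Mittag-Leffler functions in the integrand are uniformly bounded (since $z_\varkappa>0$ and \eqref{smallA0} applies), so the integrand is dominated by a constant multiple of $r^{4\alpha-2}$, and integrating on $[0,t]$ produces $t^{4\alpha-1}$ up to a constant, proving \eqref{Aequal}.

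The main work is the case $\ell>\varkappa$, where bound \eqref{largeA0} contributes two pieces after squaring. I would first observe that $|M_\ell|^2 = \lambda_\ell/\omega^2 - 1$ is strictly increasing in $\ell$, so $\gamma^2/|M_\ell|^2$ is bounded uniformly by its value at the integer $\ell=\lceil\varkappa\rceil$, making this prefactor an $\ell$-independent constant. The polynomial piece $C_2^2/(1+kcr^\alpha\sqrt{\lambda_\ell})^2$ is handled as in the $\ell<\varkappa$ case via $u = kcr^\alpha\sqrt{\lambda_\ell}$, giving a bounded contribution of order $\lambda_\ell^{-(2\alpha-1)/(2\alpha)}$. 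For the exponential piece I would trivially bound $e^{-2K_\ell r(k^2c^2\lambda_\ell)^{1/(2\alpha)}} \le 1$ and then apply $(1+x)^p \le 2^p(1+x^p)$ with $p=2(1-\alpha)/\alpha\in[0,2)$ to split $(1+kcr^\alpha\sqrt{\lambda_\ell})^{2(1-\alpha)/\alpha}$; the $x^p$ term factors as $(kc)^p\lambda_\ell^{(1-\alpha)/\alpha}r^{2(1-\alpha)}$, and after pulling $\lambda_\ell^{(1-\alpha)/\alpha}$ outside the integral the remaining $\int_0^t r^{2\alpha-2}\cdot r^{2(1-\alpha)}\,dr = t$ is finite. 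Collecting these contributions and absorbing all $\ell$-independent constants into $C_3^I$ establishes \eqref{up2}.

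The main technical obstacle will be the book-keeping in the $\ell>\varkappa$ case, specifically tracking the large-$\lambda_\ell$ factor coming from $(1+kcr^\alpha\sqrt{\lambda_\ell})^{2(1-\alpha)/\alpha}$ in \eqref{largeA0}. The key arithmetic identity $r^{2\alpha-2}\cdot r^{2(1-\alpha)} = r^0$ is what makes the residual integral $t$-finite and lets $\lambda_\ell^{(1-\alpha)/\alpha}$ pull out cleanly, so it is important that the decomposition via $(1+x)^p \le 2^p(1+x^p)$ is applied rather than a sharper change of variables that would give a decaying but less tractable bound. A secondary concern is that the prefactor $\gamma^2/|M_\ell|^2$ fails to be bounded as $\ell\to\varkappa^+$ in the continuum, but restriction to integer $\ell>\varkappa$ and monotonicity of $|M_\ell|^2$ in $\ell$ resolves this uniformly.
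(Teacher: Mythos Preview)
Your approach is correct and follows the paper's overall strategy: substitute the Mittag-Leffler bounds from Proposition~\ref{BoundE} into \eqref{var}, separate terms, and scale out the $\lambda_\ell$-dependence by a substitution. The tactical choices differ. For $\ell<\varkappa$ (and for the polynomial piece when $\ell>\varkappa$) the paper substitutes $u=|z_\ell^{-}|^{1/\alpha}r$ and then splits the resulting integral at $u=1$, bounding the two pieces by $1/(2\alpha-1)$ and $1$; you instead dominate directly by the convergent $\int_0^\infty u^{(\alpha-1)/\alpha}(1+u)^{-2}\,du$, which is cleaner and sidesteps the paper's implicit (and unaddressed) assumption that $|z_\ell^{-}|^{1/\alpha}t\ge 1$. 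For the exponential piece when $\ell>\varkappa$ the paper keeps the full $(a+b)^2$ from \eqref{largeA0}, changes variables to $u=(kc\sqrt{\lambda_\ell})^{1/\alpha}r$, splits at $u=1$, and finds that the dominant piece $\Xi_2$ contributes $t(kc\sqrt{\lambda_\ell})^{1/\alpha}$, which against the prefactor $(kc\sqrt{\lambda_\ell})^{(1-2\alpha)/\alpha}$ yields $\lambda_\ell^{(1-\alpha)/\alpha}$; you instead separate the exponential and polynomial pieces first via $(a+b)^2\le 2(a^2+b^2)$ and use $(1+x)^p\le 2^p(1+x^p)$ to pull $\lambda_\ell^{(1-\alpha)/\alpha}$ outside before integrating. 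The cancellation $r^{2\alpha-2}\cdot r^{2(1-\alpha)}=1$ you highlight is exactly what makes the paper's $\Xi_2$ integrand collapse to a constant, so the two computations are equivalent; both discard the exponential via $e^{-K_\ell u}\le 1$ and both end up hiding a $t$-dependent factor inside $C_3^I$. Two small remarks: your ``$0<z_\ell^{-}$'' fails at $\ell=0$ (where $z_0^{-}=0$), but the stated bound \eqref{up1} is then vacuous so nothing is lost; and when bounding $1/|M_\ell|^2$ write ``the least integer strictly greater than $\varkappa$'' rather than $\lceil\varkappa\rceil$, to cover the possibility $\varkappa\in\N$.
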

\begin{proof}
Let $\ell< \varkappa$, then by \eqref{var} we have
\begin{align}\label{I2I2}
&\sigma_{\ell,t,\alpha}^2=	\dfrac{\gamma^2}{|M_\ell|^2}\int_{0}^{t} r^{2\alpha-2}\Big\vert E_{\alpha,\alpha}\big(-z_{\ell}^{-}r^{\alpha}\big)-E_{\alpha,\alpha}\big(-z_{\ell}^{+}r^{\alpha}\big)\Big\vert^2dr\notag\\
&\leq 	\dfrac{\gamma^2}{|M_\ell|^2}\Big(\int_{0}^{t} r^{2\alpha-2}\big\vert E_{\alpha,\alpha}\big(-z_{\ell}^{-}r^{\alpha}\big)\big\vert^2dr + \int_{0}^{t} r^{2\alpha-2}\big\vert E_{\alpha,\alpha}\big(-z_{\ell}^{+}r^{\alpha}\big)\big\vert^2dr\Big)\notag\\
&=: \dfrac{\gamma^2}{|M_\ell|^2}(\Theta_1(t)+\Theta_2(t)).
\end{align}
The term $\Theta_1(t)$ in \eqref{I2I2} can be bounded, using \eqref{smallA0}, by
\begin{align}\label{I1}
 \Theta_1(t)&= \int_{0}^{t} r^{2\alpha-2}\big\vert E_{\alpha,\alpha}\big(-z_{\ell}^{-}r^{\alpha}\big)\big\vert^2dr\leq C_3^2 \int_{0}^{t} r^{2\alpha-2}\frac{1}{\big(1+|z_{\ell}^{-}|r^\alpha\big)^2}dr.
\end{align}

Using the change of variables $u=|z_{\ell}^{-}|^{\frac{1}{\alpha}} r$, we obtain from \eqref{I1} that
\[
\Theta_1(t)\leq C_3^2 |z_{\ell}^{-}|^{\frac{1-2\alpha}{\alpha}}\int_{0}^{|z_{\ell}^{-}|^{\frac{1}{\alpha}}t} u^{2\alpha-2}\frac{1}{\big(1+u^\alpha\big)^2}du.
\]
If $|z_{\ell}^{-}|^{\frac{1}{\alpha}}t\geq1$, we could write
\begin{align}\label{I11}
  \Theta_1(t)&\leq C_3^2 |z_{\ell}^{-}|^{\frac{1-2\alpha}{\alpha}}\Bigg(\int_{0}^{1}u^{2\alpha-2}du+\int_{1}^{|z_{\ell}^{-}|^{\frac{1}{\alpha}}t} u^{2\alpha-2}\frac{1}{\big(1+u^\alpha\big)^2}du\Bigg)\notag\\ 
  & \leq C_3^2 |z_{\ell}^{-}|^{\frac{1-2\alpha}{\alpha}}\Bigg(\frac{1}{2\alpha-1}+\int_{1}^{|z_{\ell}^{-}|^{\frac{1}{\alpha}}t} u^{-2}du\Bigg)\notag\\
  & \leq C_3^2 |z_{\ell}^{-}|^{\frac{1-2\alpha}{\alpha}}\Bigg(\frac{1}{2\alpha-1}+1-|z_{\ell}^{-}|^{-\frac{1}{\alpha}}t^{-1}\Bigg)\notag\\
  &\leq C_3^2 |z_{\ell}^{-}|^{\frac{1-2\alpha}{\alpha}}\Bigg(\frac{2\alpha}{2\alpha-1}\Bigg).
\end{align}
Similarly, we have
\begin{align}\label{I2}
   \Theta_2(t)&\leq C_3^2 |z_{\ell}^{+}|^{\frac{1-2\alpha}{\alpha}}\Bigg(\frac{2\alpha}{2\alpha-1}\Bigg).
\end{align}

Since for $\ell<\varkappa$, $\vert z_{\ell}^{-}\vert\leq \vert z_{\ell}^{+}\vert$, then by substituting \eqref{I11} and \eqref{I2} in  \eqref{I2I2} we obtain \eqref{up1}, where 
$
C_1^I:=4C_3^2\gamma^2/(2\alpha-1).
$

Now if $\ell>\varkappa$, then by \eqref{largeA0} we have
\begin{align*}
  \sigma_{\ell,t,\alpha}^2&=	\dfrac{\gamma^2}{|M_\ell|^2}\int_{0}^{t} r^{2\alpha-2}\Big\vert E_{\alpha,\alpha}\big(-z_{\ell}^{-}r^{\alpha}\big)-E_{\alpha,\alpha}\big(-z_{\ell}^{+}r^{\alpha}\big)\Big\vert^2dr\\
  &\leq 	\dfrac{\gamma^2}{|M_\ell|^2}\Big(\int_{0}^{t} r^{2\alpha-2}\big\vert E_{\alpha,\alpha}\big(-z_{\ell}^{-}r^{\alpha}\big)\big\vert^2dr+\int_{0}^{t} r^{2\alpha-2}\big\vert E_{\alpha,\alpha}\big(-z_{\ell}^{+}r^{\alpha}\big)\big\vert^2dr\Big)\\
  &\leq 2	\dfrac{\gamma^2}{|M_\ell|^2}\int_{0}^{t} r^{2\alpha-2}\notag\\
  &\times\Bigg( C_1\big(1+kcr^\alpha\sqrt{\lambda_\ell}\big)^{\frac{1-\alpha}{\alpha}}e^{-K_{\ell}\;r\big(k^2c^2\lambda_\ell\big)^{1/2\alpha}}+\dfrac{C_2}{1+kcr^\alpha\sqrt{\lambda_\ell}}\Bigg)^2dr.
\end{align*}
If $t(kc\sqrt{\lambda_\ell})^{1/\alpha}\geq1$, then the change of variables $u=(kc\sqrt{\lambda_\ell})^{1/\alpha}r$ gives

\begin{align}\label{S1}
  \sigma_{\ell,t,\alpha}^2&\leq 	\dfrac{2\gamma^2}{|M_\ell|^2}\big(kc\sqrt{\lambda_\ell}\big)^{\frac{1-2\alpha}{\alpha}}\notag\\
  &\times\int_{0}^{t(kc\sqrt{\lambda_\ell})^{1/\alpha}} u^{2\alpha-2}\Bigg( C_1\big(1+u^\alpha\big)^{\frac{1-\alpha}{\alpha}}e^{-K_{\ell}u}+\dfrac{C_2}{1+u^\alpha}\Bigg)^2du\notag\\
  &\leq \dfrac{2\gamma^2}{|M_\ell|^2}\big(kc\sqrt{\lambda_\ell}\big)^{\frac{1-2\alpha}{\alpha}}\notag\\
  &\times\Bigg[\int_{0}^{1} u^{2\alpha-2}\Bigg( C_1\big(1+u^\alpha\big)^{\frac{1-\alpha}{\alpha}}e^{-K_{\ell}u}+\dfrac{C_2}{1+u^\alpha}\Bigg)^2du\notag\\
  &+ \int_{1}^{t(kc\sqrt{\lambda_\ell})^{1/\alpha}} u^{2\alpha-2}\Bigg( C_1\big(1+u^\alpha\big)^{\frac{1-\alpha}{\alpha}}e^{-K_{\ell}u}+\dfrac{C_2}{1+u^\alpha}\Bigg)^2du\Bigg]\notag\\
  &=: 2	\dfrac{\gamma^2}{|M_\ell|^2}\big(kc\sqrt{\lambda_\ell}\big)^{\frac{1-2\alpha}{\alpha}}(\Xi_1+\Xi_2).
\end{align}

Note that the term $\Xi_1$ in \eqref{S1} can be bounded by
\begin{align}\label{J1}
\Xi_1\leq \frac{\big(2^{\frac{1-\alpha}{\alpha}}C_1+C_2\big)^2}{2\alpha-1},
\end{align}
while the term $\Xi_2$ can be bounded by
\begin{align}\label{J2}
    \Xi_2&=\int_{1}^{t(kc\sqrt{\lambda_\ell})^{1/\alpha}} u^{2\alpha-2}\Big( C_1\big(1+u^\alpha\big)^{\frac{1-\alpha}{\alpha}}e^{-K_{\ell}u}+\dfrac{C_2}{1+u^\alpha}\Big)^2du\notag\\
    &\leq \int_{1}^{t(kc\sqrt{\lambda_\ell})^{1/\alpha}} u^{2\alpha-2}\Big( C_1\big(2u^\alpha\big)^{\frac{1-\alpha}{\alpha}}e^{-K_{\ell}u}+C_2u^{-\alpha}\Big)^2du\notag\\
    &\leq\int_{1}^{t(kc\sqrt{\lambda_\ell})^{1/\alpha}} \Big( C_1 2^{\frac{1-\alpha}{\alpha}}+C_2u^{-1}\Big)^2du\notag\\
    &\leq t\big(2^{\frac{1-\alpha}{\alpha}}C_1+C_2\big)^2 (kc\sqrt{\lambda_\ell})^{\frac{1}{\alpha}}.
\end{align}

Substituting \eqref{J1} and \eqref{J2} in \eqref{S1} we obtain, for $\ell>\varkappa$, $ \sigma_{\ell,t,\alpha}^2\leq C_3^I \lambda_\ell^{\frac{1-\alpha}{\alpha}}$,
where 
$
    C_3^I:= \frac{\gamma^2}{|M_{\lfloor\varkappa\rfloor+1}|^2}(ck)^{\frac{1-2\alpha}{\alpha}}\big(2^{\frac{1-\alpha}{\alpha}}C_1+C_2\big)^2\Big(\frac{2^{1/\alpha}}{2\alpha-1}+t(ck)^{1/\alpha}\Big)
$
and $\lfloor \cdot\rfloor$ is the floor function.

Finally, we let $\ell=\varkappa$  (i.e., $\lambda_{\ell}=\omega^2$). Then by \eqref{var} and \eqref{smallA0} we write
\begin{align*}
\sigma_{\varkappa,t,\alpha}^2&=\dfrac{c^4}{\alpha^2}\int_{0}^{t} r^{4\alpha-2}\Big\vert E_{\alpha,2\alpha-1}\big(-z_{\varkappa}r^{\alpha}\big)-(1-\alpha)E_{\alpha,2\alpha}\big(-z_{\varkappa}r^{\alpha}\big)\Big\vert^2dr\\
&\leq \dfrac{c^4}{\alpha^2}\Big(\int_{0}^{t} r^{4\alpha-2}\Big\vert E_{\alpha,2\alpha-1}\big(-z_{\varkappa}r^{\alpha}\big)\Big\vert^2 dr+(1-\alpha)^2\int_{0}^{t} r^{4\alpha-2}\Big\vert E_{\alpha,2\alpha}\big(-z_{\varkappa}r^{\alpha}\big)\Big\vert^2 dr\Big)\\
&\leq 2\dfrac{C_3^2c^4(1+(1-\alpha)^2)}{\alpha^2}\int_{0}^{t} \dfrac{r^{4\alpha-2}}{(1+z_\varkappa r^\alpha)^2}dr\\
&\leq 2\dfrac{C_3^2c^4(1+(1-\alpha)^2)}{\alpha^2}\int_{0}^{t}r^{4\alpha-2}dr= C_{2}^It^{4\alpha-1},
\end{align*}
where 
\[
C_2^I:=\frac{2C_3^2c^4(1+(1-\alpha)^2)}{\alpha^2(4\alpha-1)}.
\]
Thus the proof is complete.
\end{proof}

\begin{rem}
By equations \eqref{z1z2} and \eqref{Ml} we have  $z_{0}^{+}=c^2\gamma^{-1}$, $z_{0}^{-}=0$, and $M_0=1$. Thus when $\ell=0$ there holds, for $t\ge0$,
\begin{align}\label{sig0}
   \sigma_{0,t,\alpha}^2&= \int_{0}^{t} r^{2\alpha-2}\Big\vert 1-E_{\alpha,\alpha}\big(-c^2\gamma^{-1}r^{\alpha}\big)\Big\vert^2dr\notag\\
   &\leq (1+C_3^2)\int_{0}^{t} r^{2\alpha-2}dr=\dfrac{1+C_3^2}{2\alpha-1}t^{2\alpha-1},
\end{align}
where the last step used that $E_{\alpha,\alpha}(-x^\alpha)\leq C_3$, $x\geq0$ (see equation \eqref{smallA0}). 
\end{rem}

\subsection{Stochastic integrals of Mittag-Leffler functions}\label{Sec3}

In this subsection, we provide some tools and technical results from the theory of stochastic integrals concerning $L_2(\bS^2)$-valued time-delayed Brownian motions on $\bS^2$. We also establish a novel result that provides a rigorous foundation for taking fractional integrals of stochastic integrals.

Let $L_2([s,t])$, $[s,t]\subset\R_{+}$, be a Hilbert space which consists of all real-valued square-integrable functions on $[s,t]$, i.e., $\{g:\int_{s}^{t} |g(u)|^2du< \infty\}$. For a non-random function $g\in  L_{2}([s,t])$ and a real-valued Brownian motion $B(t)$, stochastic integrals $\int_{s}^{t}g(u)dB(u)$, in the $L_{2}(\Omega\times[s,t])$ sense, are well-defined (see \cite{Kuo2006}). We can similarly define stochastic integrals, in the $L_{2}(\Omega\times[s,t])$ sense, of the form $\int_{s}^{t}g(u)dB_{\tau}(u)$, $t>s\geq\tau$, where $B_{\tau}(\cdot)$ is a time-delayed Brownian motion and $g\in  L_{2}([s,t])$.

Following \cite{Alodat2022} (see also \cite{DaPrato}) we define a time-delayed Brownian motion on the unit sphere $\bS^2$. In particular,  
let $W_\tau(t)$, $\tau\geq0$, be an $L_2(\bS^2)$-valued time-delayed Brownian motion. Then, for each real-valued $f$ in $L_2(\mathbb{S}^2)$, the stochastic process $\langle W_\tau(t),f \rangle_{L_2(\bS^2)}$ is a real-valued, time-delayed Brownian motion. Also, for arbitrary real-valued $f, g \in L_2(\bS^2)$, $t, s \ge \tau$, there holds
\[
\bE \big[\langle W_\tau(t),f\rangle_{L_2(\bS^2)} \langle W_\tau(s),f\rangle_{L_2(\bS^2)}\big] = 
\min(s,t)\bE \big[\langle W_\tau(\tau+1), f\rangle_{L_2(\bS^2)}^2\big]
\]
and
\[
\bE [\langle W_{\tau}(t),f\rangle_{L_2(\bS^2)} \langle W_{\tau}(s),g\rangle_{L_2(\bS^2)}] = \min(s,t)
\bE [\langle Qf,g\rangle_{L_2(\bS^2)}],
\]
where $Q$ is the covariance operator for 
the law of $W_{\tau}(1+\tau)$, see \cite[Section 2.3.1] {DaPrato}.

\begin{defin}[{\rm\cite{Alodat2022}}]\label{BRo}   
An $L_2(\bS^2)$-valued stochastic process $W_\tau(t)$, for $t \ge \tau$,
is called a Q-Wiener process if (i) $W_\tau(\tau) = 0$, (ii) $W_\tau$ has continuous trajectories, (iii) $W_\tau$ has independent increments, and (iv) $W_\tau(t)-W_\tau(s)$ follows the normal distribution $\mathcal{N}(0,(t-s)Q)$, $t\geq s\geq\tau$.
\end{defin}
For the complete orthonormal basis 
$Y_{\ell,m}$, there is a sequence of non-negative numbers $\mathcal{A}_{\ell}$,
\[
Q Y_{\ell,m} = \mathcal{A}_{\ell} Y_{\ell,m},\ 
\ell=0,1,2,\ldots; m=-\ell,\ldots,\ell.
\]

\begin{prop}[{\rm\cite{Alodat2022}}]
If $Q$ is of trace class, i.e., ${\rm Tr}(Q)<\infty$, then the following condition
holds true
\begin{equation}\label{eq:condAell}
		\sum_{\ell=0}^{\infty} (2\ell+1)\calA_{\ell} < \infty.
\end{equation}    
\end{prop}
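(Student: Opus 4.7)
The plan is to compute $\mathrm{Tr}(Q)$ directly using the orthonormal basis of real spherical harmonics $\{Y_{\ell,m}:\ell\in\N_0,\ m=-\ell,\dots,\ell\}$ of $L_2(\bS^2)$, and to exploit the fact that each $Y_{\ell,m}$ is already an eigenfunction of $Q$ with eigenvalue $\calA_\ell$ independent of $m$. Since the trace of a non-negative self-adjoint operator is independent of the chosen orthonormal basis, this is the natural basis to use.

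First, I would note that for any orthonormal basis $\{e_j\}$ of $L_2(\bS^2)$, the trace is defined by
\[
\mathrm{Tr}(Q)=\sum_{j}\langle Qe_j,e_j\rangle_{L_2(\bS^2)},
\]
and for the spherical-harmonic basis the relation $QY_{\ell,m}=\calA_\ell Y_{\ell,m}$ together with $\|Y_{\ell,m}\|_{L_2(\bS^2)}=1$ yields
\[
\langle QY_{\ell,m},Y_{\ell,m}\rangle_{L_2(\bS^2)}=\calA_\ell\langle Y_{\ell,m},Y_{\ell,m}\rangle_{L_2(\bS^2)}=\calA_\ell.
\]

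Second, I would sum over the double index $(\ell,m)$. Because the eigenvalue $\calA_\ell$ does not depend on $m$ and the index $m$ runs from $-\ell$ to $\ell$, giving $2\ell+1$ terms for each fixed $\ell$, I obtain
\[
\mathrm{Tr}(Q)=\sum_{\ell=0}^{\infty}\sum_{m=-\ell}^{\ell}\calA_\ell=\sum_{\ell=0}^{\infty}(2\ell+1)\calA_\ell.
\]
Invoking the hypothesis $\mathrm{Tr}(Q)<\infty$ then gives \eqref{eq:condAell} at once.

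There is no real obstacle here; the only subtlety worth flagging explicitly is that the reordering of the (non-negative) double series is justified by Tonelli's theorem, and that the definition of $\mathrm{Tr}(Q)$ as a basis-independent quantity requires $Q$ to be non-negative and self-adjoint, which follows from its role as the covariance operator of the Gaussian law of $W_\tau(1+\tau)$ on $L_2(\bS^2)$.
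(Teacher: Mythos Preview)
Your argument is correct: computing $\mathrm{Tr}(Q)$ in the spherical-harmonic eigenbasis and using $QY_{\ell,m}=\calA_\ell Y_{\ell,m}$ with $2\ell+1$ values of $m$ per $\ell$ is exactly the natural route, and your remarks on non-negativity and Tonelli are the right justifications. The paper itself does not supply a proof of this proposition---it is stated with a citation to \cite{Alodat2022}---so there is nothing further to compare.
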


\begin{prop}\label{RepW}
    Let $W_{\tau}$ be an $L_{2}(\bS^2)$-valued time-delayed Brownian motion on $\bS^2$. Then $W_{\tau}$ admits the following $L_{2}(\Omega\times\bS^2)$ representation
\begin{align}\label{Win}
	W_{\tau}(t)&\stackrel{d}{=} \sum_{\ell=0}^\infty\sum_{m=-\ell}^{\ell} \sqrt{\calA_{\ell}}\beta_{\ell,m,\tau}(t) Y_{\ell,m},\quad t\geq\tau\geq0,
\end{align}
where the angular power spectrum $\{\calA_{\ell},\ \ell\in\N_0\}$ of $W_{\tau}$ satisfies the condition \eqref{eq:condAell} and $\{ \beta_{\ell,m,\tau}(t),\ \ell\in\N_0,\ m=-\ell,\dots, \ell\}$ is a set of independent, real-valued time-delayed Brownian motions with variances $1$ when $t=\tau+1$.
\end{prop}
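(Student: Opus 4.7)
\textbf{Proof proposal for Proposition \ref{RepW}.}
The plan is to establish the Karhunen--Lo\`eve style expansion by combining the spectral decomposition of the covariance operator $Q$ with the defining properties of a $Q$-Wiener process. Since $W_\tau(t)\in L_2(\bS^2)$ almost surely, the spectral basis $\{Y_{\ell,m}\}$ gives a Fourier expansion
\[
W_{\tau}(t) = \sum_{\ell=0}^{\infty}\sum_{m=-\ell}^{\ell} \widehat{W}_{\ell,m,\tau}(t)\, Y_{\ell,m},\qquad \widehat{W}_{\ell,m,\tau}(t):=\langle W_{\tau}(t), Y_{\ell,m}\rangle_{L_2(\bS^2)}.
\]
The goal then reduces to showing that each coefficient process equals $\sqrt{\calA_\ell}\,\beta_{\ell,m,\tau}(t)$ in distribution, that the family $\{\beta_{\ell,m,\tau}\}$ is jointly independent across $(\ell,m)$, and that the series converges in $L_2(\Omega\times\bS^2)$.

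First I would check, directly from Definition \ref{BRo}, that for each fixed $(\ell,m)$ the real-valued process $\widehat{W}_{\ell,m,\tau}(t)$ is a time-delayed Brownian motion: continuity of trajectories passes through the bounded linear functional $\langle\cdot,Y_{\ell,m}\rangle_{L_2(\bS^2)}$; independence of increments of $\widehat{W}_{\ell,m,\tau}$ follows from independence of increments of $W_\tau$; and for $t\ge s\ge \tau$, the increment $\widehat{W}_{\ell,m,\tau}(t)-\widehat{W}_{\ell,m,\tau}(s)$ is centered Gaussian with variance
\[
\bE\big[\langle W_\tau(t)-W_\tau(s),Y_{\ell,m}\rangle_{L_2(\bS^2)}^2\big] = (t-s)\langle Q Y_{\ell,m},Y_{\ell,m}\rangle_{L_2(\bS^2)} = (t-s)\calA_{\ell},
\]
using $Q Y_{\ell,m}=\calA_\ell Y_{\ell,m}$. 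In particular, the variance equals $\calA_\ell$ at $t=\tau+1$, so setting $\beta_{\ell,m,\tau}(t):=\calA_\ell^{-1/2}\widehat{W}_{\ell,m,\tau}(t)$ (whenever $\calA_\ell>0$; otherwise the term drops out) yields a real-valued time-delayed Brownian motion with unit variance at $t=\tau+1$.

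The main obstacle is upgrading the pointwise orthogonality of the Fourier coefficients to joint independence of the processes $\{\beta_{\ell,m,\tau}\}_{\ell,m}$. For this I would argue via joint Gaussianity: for any finite collection of times $t_1,\dots,t_n$ and indices, the vector $(\widehat{W}_{\ell_i,m_i,\tau}(t_j))_{i,j}$ is Gaussian because of the Gaussianity of the increments in Definition \ref{BRo}, so it suffices to check that cross-covariances vanish. For $(\ell,m)\neq(\ell',m')$ and $s\le t$, writing $\widehat{W}_{\ell',m',\tau}(t)=\widehat{W}_{\ell',m',\tau}(s)+\big(\widehat{W}_{\ell',m',\tau}(t)-\widehat{W}_{\ell',m',\tau}(s)\big)$ and using independent increments reduces the calculation to
\[
\bE\big[\widehat{W}_{\ell,m,\tau}(s)\widehat{W}_{\ell',m',\tau}(s)\big] = (s-\tau)\,\bE\big[\langle Q Y_{\ell,m},Y_{\ell',m'}\rangle_{L_2(\bS^2)}\big] = (s-\tau)\calA_\ell\,\delta_{\ell\ell'}\delta_{mm'}=0,
\]
which, combined with joint Gaussianity, gives independence of the entire family of processes.

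Finally, I would verify convergence of the series \eqref{Win} in $L_2(\Omega\times\bS^2)$ by a Parseval computation: the partial sums $S_L(t)$ satisfy
\[
\bE\big[\|W_\tau(t)-S_L(t)\|_{L_2(\bS^2)}^2\big]=\sum_{\ell>L}\sum_{m=-\ell}^{\ell}\bE\big[\widehat{W}_{\ell,m,\tau}(t)^2\big]=(t-\tau)\sum_{\ell>L}(2\ell+1)\calA_\ell,
\]
which tends to zero by the trace-class condition \eqref{eq:condAell}. This delivers the representation \eqref{Win} and completes the proof.
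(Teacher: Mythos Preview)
Your argument is correct and follows the standard Karhunen--Lo\`eve construction for $Q$-Wiener processes (as in Da~Prato--Zabczyk \cite{DaPrato}). Note, however, that the paper does not supply its own proof of Proposition~\ref{RepW}: it is stated without proof as a known structural result, in the spirit of the cited references \cite{Alodat2022,DaPrato}. So there is no ``paper's proof'' to compare against; your write-up is exactly the kind of justification one would give if asked to fill in the details, and each step---Gaussianity and variance of the coefficient processes via $QY_{\ell,m}=\calA_\ell Y_{\ell,m}$, independence from vanishing cross-covariances under joint Gaussianity, and $L_2(\Omega\times\bS^2)$ convergence from the trace condition \eqref{eq:condAell}---is sound.
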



We denote by $\widehat{W}_{\ell,m,\tau}$ the Fourier coefficients for $W_{\tau}$ (see Proposition \ref{RepW}), i.e., 
\begin{align}\label{W_hat}
    \widehat{W}_{\ell,m,\tau}(t):=\sqrt{\calA_{\ell}}\beta_{\ell,m,\tau}(t),\ t\geq\tau, \ell\in\N_{0}, m=-\ell,\dots,\ell.
\end{align}
For a real-valued function $g\in L_{2}(\bS^2)$ and an $L_2(\bS^2)$-valued Brownian motion $W_{\tau}(t)$ on $\bS^2$, stochastic integrals $\int_{s}^{t}g(u)dW_{\tau}(u)$, in the $L_{2}(\Omega\times\bS^2)$ sense, can be defined as follows. 
\begin{defin}\label{def1}
	Let $ W_{\tau}$ be an $L_2(\bS^2)$-valued time-delayed Brownian motion on $\bS^2$. Let $\widehat{W}_{\ell,m,\tau}(t)$, be the Fourier coefficients for $ W_{\tau}$ defined by \eqref{W_hat}. Let $g\in  L_{2}(\bS^2)$ be real-valued. Then, for $t>s\geq\tau$, the stochastic integral $\int_s^t g(u)dW_{\tau}(u)$ is defined as
	\[
	\int_s^t g(u)dW_{\tau}(u):= \sum_{\ell=0}^\infty \sum_{m=-\ell}^\ell\left( \int_s^t g(u)d\widehat{W}_{\ell,m,\tau}(u)\right) Y_{\ell,m},
	\]
 where the above expansion is convergent in the $L_{2}(\Omega\times\bS^2)$ sense.
\end{defin}

The following result gives expressions for stochastic integrals of the form $\int_s^t g(u)dW_{\tau}(u)$ in terms of the angular power spectrum of $W_{\tau}$ \cite{Alodat2022}.
\begin{prop}[\cite{Alodat2022}]\label{Prop1}
	Let $ W_{\tau}$ be an $L_2(\bS^2)$-valued time-delayed Brownian motion on $\bS^2$. Let $\{\calA_\ell:\ell\in\N_{0}\}$ be the angular power spectrum of $W_{\tau}$. Then, for $t>s\geq\tau$, the stochastic integral $\int_s^t g(u)dW_{\tau}(u)$ given by Definition {\rm\ref{def1}} satisfies
	\[
	\bE	\left[ \bigg\|\int_s^t g(u)dW_{\tau}(u)  \bigg\|_{L_{2}(\bS^2)}^2\right]= \sum_{\ell=0}^{\infty}(2\ell+1)\calA_{\ell} \int_s^t |g(u)|^2 du.
	\]
\end{prop}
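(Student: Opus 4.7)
The plan is to combine Definition~\ref{def1} with Parseval's identity to reduce the $L_2(\bS^2)$-norm of the integral to a series of squared scalar stochastic integrals, and then to apply the classical one-dimensional It\^o isometry to each term. The inner sums over $m$ will collapse into the factor $(2\ell+1)$, producing the stated formula.

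First, by Definition~\ref{def1} the expansion
\[
\int_s^t g(u)\, dW_{\tau}(u) = \sum_{\ell=0}^{\infty} \sum_{m=-\ell}^{\ell} \left( \int_s^t g(u)\, d\widehat{W}_{\ell,m,\tau}(u) \right) Y_{\ell,m}
\]
converges in $L_2(\Omega \times \bS^2)$. Since $\{Y_{\ell,m}\}$ is an orthonormal basis of $L_2(\bS^2)$, Parseval's identity applied pointwise in $\omega$ yields
\[
\left\| \int_s^t g(u)\, dW_{\tau}(u) \right\|_{L_2(\bS^2)}^2 = \sum_{\ell=0}^{\infty} \sum_{m=-\ell}^{\ell} \left( \int_s^t g(u)\, d\widehat{W}_{\ell,m,\tau}(u) \right)^2.
\]

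Next, I would take expectations and interchange $\bE$ with the double sum using the Fubini--Tonelli theorem, which applies since the summands are non-negative. Substituting $d\widehat{W}_{\ell,m,\tau}(u) = \sqrt{\calA_\ell}\, d\beta_{\ell,m,\tau}(u)$ from \eqref{W_hat} and invoking the It\^o isometry for each of the independent real-valued time-delayed Brownian motions $\beta_{\ell,m,\tau}$ (each with unit variance increment) gives
\[
\bE\left[ \left( \int_s^t g(u)\, d\widehat{W}_{\ell,m,\tau}(u) \right)^2 \right] = \calA_\ell \int_s^t |g(u)|^2\, du.
\]
Since the right-hand side is independent of $m$, the inner sum over $m=-\ell,\dots,\ell$ contributes the factor $2\ell+1$, which yields the claimed identity.

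The main subtlety lies in the interchange of $\bE$ with the infinite double sum, which is handled by Tonelli's theorem for non-negative integrands together with the trace-class hypothesis \eqref{eq:condAell} that ensures the right-hand side is finite whenever $\int_s^t |g(u)|^2\, du < \infty$. Once this interchange is granted, the remainder of the argument is a direct combination of Parseval's identity in $L_2(\bS^2)$ with the classical scalar It\^o isometry, with no further technical difficulties.
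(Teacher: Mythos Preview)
The paper does not supply its own proof of this proposition; it is quoted directly from \cite{Alodat2022} without argument. Your proof is correct and is exactly the standard derivation one would expect: expand via Definition~\ref{def1}, apply Parseval's identity, interchange expectation and the non-negative sum by Tonelli, and use the scalar It\^o isometry together with \eqref{W_hat} to evaluate each term, with the sum over $m$ producing the factor $2\ell+1$.
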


By It\^o's isometry (see \cite{Oksendal2003}) and \cite[Theorem 2.3.4]{Kuo2006}, the following result can be straightforwardly established.
\begin{prop}\label{PropVar}
For $\alpha\in(\frac{1}{2},1]$, let 
$\calI_{\ell,m,\alpha}(t)$, $\ell\in\N_{0}$, $m=0,\dots,\ell$, be stochastic integrals defined, for $t>0$, as
\begin{align}\label{Int}
\mathcal{I}_{\ell,m,\alpha}(t):=\int_{0}^{t} \psi_{\ell,\alpha}(t-u)d\beta_{\ell,m}(u),
\end{align}
where $\beta_{\ell,m}(u)$ are independent real-valued Brownian motions with variance $1$ at $u=1$. Then, for $m=0,\dots,\ell$, $\ell\in \N_{0}$,  
$\mathcal{I}_{\ell,m,\alpha}(t)\sim \mathcal{N}(0,\sigma_{\ell,t,\alpha}^2)$, where $\sigma_{\ell,t,\alpha}^2$ is given by {\rm \eqref{var}}.
\end{prop}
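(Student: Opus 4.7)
The proof is essentially a direct application of It\^{o}'s isometry for Wiener integrals of deterministic integrands combined with the explicit form of $\psi_{\ell,\alpha}$ given by Lemma~\ref{pf:psi}, so the plan is to verify each step cleanly and check the two cases $\ell\neq\varkappa$ and $\ell=\varkappa$ separately.

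The plan is as follows. First I would verify that $\psi_{\ell,\alpha}(t-\cdot)\in L_{2}([0,t])$, so that the stochastic integral $\mathcal{I}_{\ell,m,\alpha}(t)$ in \eqref{Int} is well-defined in the It\^{o} sense with respect to the real-valued Brownian motion $\beta_{\ell,m}$. In the case $\ell\neq\varkappa$, Lemma~\ref{pf:psi} gives $\psi_{\ell,\alpha}(r)=\frac{\gamma}{M_{\ell}}r^{\alpha-1}[E_{\alpha,\alpha}(-z_{\ell}^{-}r^{\alpha})-E_{\alpha,\alpha}(-z_{\ell}^{+}r^{\alpha})]$; the factor $r^{\alpha-1}$ is square-integrable near zero precisely because $\alpha>\frac{1}{2}$, and the Mittag--Leffler factors are uniformly bounded thanks to Proposition~\ref{BoundE}. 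An analogous argument handles the case $\ell=\varkappa$ using the $r^{2\alpha-1}$ prefactor.

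Next, since the integrand is a deterministic function in $L_{2}([0,t])$, the Wiener integral $\mathcal{I}_{\ell,m,\alpha}(t)$ is a mean-zero Gaussian random variable (by, e.g., \cite[Theorem 2.3.4]{Kuo2006} cited in the paper). The only thing left is to identify its variance. By It\^{o}'s isometry,
\begin{equation*}
\mathrm{Var}[\mathcal{I}_{\ell,m,\alpha}(t)]=\bE\big[\mathcal{I}_{\ell,m,\alpha}^{2}(t)\big]=\int_{0}^{t}|\psi_{\ell,\alpha}(t-u)|^{2}\,du,
\end{equation*}
and the substitution $r=t-u$ converts this to $\int_{0}^{t}|\psi_{\ell,\alpha}(r)|^{2}\,dr$.

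Finally I would substitute the closed form of $\psi_{\ell,\alpha}$ from \eqref{psi}. For $\ell\neq\varkappa$, squaring the factor $\frac{\gamma}{M_{\ell}}r^{\alpha-1}[E_{\alpha,\alpha}(-z_{\ell}^{-}r^{\alpha})-E_{\alpha,\alpha}(-z_{\ell}^{+}r^{\alpha})]$ and integrating yields exactly the top branch of \eqref{var}; for $\ell=\varkappa$, squaring the expression $\frac{c^{2}}{\alpha}r^{2\alpha-1}[E_{\alpha,2\alpha-1}(-z_{\varkappa}r^{\alpha})-(1-\alpha)E_{\alpha,2\alpha}(-z_{\varkappa}r^{\alpha})]$ and integrating produces the bottom branch. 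This identifies the variance with $\sigma_{\ell,t,\alpha}^{2}$ and completes the proof.

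The only genuine obstacle is the integrability check at $r=0$, which is the reason the paper restricts to $\alpha\in(\tfrac{1}{2},1]$; everything else is a bookkeeping exercise combining Lemma~\ref{pf:psi} with the standard Wiener-integral framework of Section~\ref{Sec3}, and indeed the statement of the proposition notes that the result is "straightforwardly established" from It\^{o}'s isometry.
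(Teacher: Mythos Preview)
Your proposal is correct and follows exactly the approach indicated in the paper, which does not give a detailed proof but simply remarks that the result is ``straightforwardly established'' from It\^{o}'s isometry and \cite[Theorem 2.3.4]{Kuo2006}. You have merely filled in the details (integrability of $\psi_{\ell,\alpha}$ via the $r^{\alpha-1}$ prefactor and boundedness of the Mittag--Leffler factors, Gaussianity of Wiener integrals, and the variance computation via It\^{o}'s isometry and the substitution $r=t-u$), all of which are routine and consistent with the paper's intent.
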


One of the critical aspects of our analysis is the ability to handle fractional integrals of stochastic integrals. This capability is pivotal for extending the range of applications of stochastic processes in various domains. To address this, we establish the following theorem, which provides a rigorous foundation for taking fractional integrals of stochastic integrals. 

\begin{theo}\label{fracstochin}
   Let $B(t)$ be a real-valued Brownian motion with variance $1$ at $t=1$. Let $\Upsilon(t,s): \R_{+}\times\R_{+}\to \R$ be a deterministic function such that both $\Upsilon(t,s)$ and  ${_s}\calJ_{t}^{\alpha} \Upsilon(t,s)$, $\alpha\in(0,1]$, are continuous in $t$ and $s$ in some region of the 
	$ts$-plane. Suppose that for $\alpha\in(0,1]$
	the stochastic integrals $\int_{0}^{t}\Upsilon(t,r)dB(r)$ and $\int_{0}^{t} \left({}_{r} \calJ_{t}^{\alpha} \Upsilon(t,r)\right)dB(r)$ are well-defined. Assume moreover that 
\begin{align}\label{con:Fubini}
    \int_{0}^{t} (t-s)^{\alpha-1}\Bigg(\int_{0}^{s}|\Upsilon(s,r)|^2 dr\Bigg)^{\frac{1}{2}}ds< \infty,
\end{align}
then for all $t>0$ there holds
    \[
     {_0}\calJ_{t}^{\alpha}\int_{0}^{t} \Upsilon(t,r) d B(r)\stackrel{d}{=} \int_{0}^t {_r}\calJ_{t}^{\alpha} \Upsilon( t,r) d B(r).
    \]
\end{theo}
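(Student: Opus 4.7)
The plan is to establish the identity pathwise by unfolding the Riemann--Liouville fractional integral on the left-hand side and then interchanging the order of integration via a stochastic Fubini theorem for Brownian-driven deterministic integrands; the hypothesis \eqref{con:Fubini} is tailored precisely to justify this swap. Once the interchange is in place, the result reduces to a direct computation, and the equality in fact holds $\mathbb{P}$-a.s., hence in distribution as claimed.

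First, applying definition \eqref{fracint} to the real-valued process $s\mapsto\int_{0}^{s}\Upsilon(s,r)\,dB(r)$, one has
\begin{align*}
    {_0}\calJ_{t}^{\alpha}\int_{0}^{t}\Upsilon(t,r)\,dB(r)
    =\frac{1}{\Gamma(\alpha)}\int_{0}^{t}(t-s)^{\alpha-1}\left(\int_{0}^{s}\Upsilon(s,r)\,dB(r)\right)ds.
\end{align*}
Next I would invoke the stochastic Fubini theorem to swap $dB(r)$ with $ds$, yielding
\begin{align*}
    \frac{1}{\Gamma(\alpha)}\int_{0}^{t}(t-s)^{\alpha-1}\int_{0}^{s}\Upsilon(s,r)\,dB(r)\,ds
    =\frac{1}{\Gamma(\alpha)}\int_{0}^{t}\left[\int_{r}^{t}(t-s)^{\alpha-1}\Upsilon(s,r)\,ds\right]dB(r).
\end{align*}
Finally, for each fixed $r$ the inner bracket equals $\Gamma(\alpha)\,{_r}\calJ_{t}^{\alpha}\Upsilon(t,r)$ by \eqref{fracint} (regarding $\Upsilon(\cdot,r)$ as a function of its first variable), and combining the two displays produces the asserted identity.

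The main obstacle is justifying the stochastic Fubini step: the kernel $(t-s)^{\alpha-1}$ is singular at $s=t$, and the integrand is itself a stochastic integral over the shrinking interval $[0,s]$. The standard integrability condition that reduces the swap to an $L^{2}(\Omega)$-Fubini argument via It\^o's isometry is exactly
\[
\int_{0}^{t}(t-s)^{\alpha-1}\Bigl(\int_{0}^{s}|\Upsilon(s,r)|^{2}\,dr\Bigr)^{1/2}ds<\infty,
\]
which is precisely \eqref{con:Fubini}. A minor side check is that the map $r\mapsto{_r}\calJ_{t}^{\alpha}\Upsilon(t,r)$ belongs to $L^{2}([0,t],dr)$, so that the right-hand side stochastic integral is well-defined; this follows from the joint continuity assumptions on $\Upsilon$ and ${_s}\calJ_{t}^{\alpha}\Upsilon(t,s)$ together with the Cauchy--Schwarz inequality applied to the inner deterministic integral.
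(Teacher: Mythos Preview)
Your proof is correct and follows essentially the same approach as the paper: unfold the fractional integral via \eqref{fracint}, apply the stochastic Fubini theorem (justified by condition \eqref{con:Fubini}) to swap the $ds$ and $dB(r)$ integrations, and recognise the resulting inner integral as ${_r}\calJ_{t}^{\alpha}\Upsilon(t,r)$. Your write-up is in fact somewhat more detailed than the paper's, which simply cites the stochastic Fubini theorem in Da Prato--Zabczyk and performs the same three-line computation.
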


\begin{proof}
    Using \eqref{fracint} and \eqref{con:Fubini} with the stochastic Fubini theorem (see \cite[Section 4.5] {DaPrato}) we get 
    \begin{align*}
        {_0}\calJ_{t}^{\alpha}\int_{0}^{t} \Upsilon(t,r) d B(r)&\stackrel{d}{=} \dfrac{1}{\Gamma(\alpha)}\int_{0}^{t} (t-s)^{\alpha-1} \Bigg(\int_{0}^{s}\Upsilon(s,r)dB(r)\Bigg)ds\\
        &\stackrel{d}{=} \int_{0}^{t} \Bigg(\dfrac{1}{\Gamma(\alpha)}\int_r^t (t-s)^{\alpha-1}\Upsilon(s,r)ds\Bigg)dB(r)\\
        & \stackrel{d}{=} \int_{0}^t {_r}\calJ_{t}^{\alpha} \Upsilon( t,r) d B(r),
    \end{align*}
    thus the proof is complete.
\end{proof}
Note that if we replace the function $\Upsilon(t,r)$ with the function $\Upsilon(t-r)$ in Theorem \ref{fracstochin}, we could write  
\[
{_0}\calJ_{t}^{\alpha}\int_{0}^{t} \Upsilon(t-r) d B(r)\stackrel{d}{=}  \int_{0}^t {_0}\calJ_{r}^{\alpha} \Upsilon( r) d B(r).
\]

\begin{rem}\label{frac_psi}
By properties of the Mittag-Leffler functions, both functions $ \psi_{\ell,\alpha}$ and $\psi^{b}_{\ell,\alpha}$, defined by \eqref{psi} and \eqref{psi_balpha} respectively, satisfy the conditions of Theorem {\rm\ref{fracstochin}}. Thus, by the equation \eqref{psi} with the help of the relation \rm{\cite[(1.100)]{Podlubny}} we have
\begin{align}\label{neqpsi1}
&{_0}\calJ_{t}^{\alpha}\int_{0}^{t} \psi_{\ell,\alpha}( t-s) d B(s)\stackrel{d}{=}\int_{0}^{t}{_0}\calJ_{r}^{\alpha}\psi_{\ell,\alpha}( s) d B(s)\notag\\
&\stackrel{d}{=}\dfrac{\gamma}{M_\ell}\int_{0}^{t}s^{2\alpha-1}(E_{\alpha,2\alpha}\big(-z_{\ell}^{-}s^{\alpha}\big)-E_{\alpha,2\alpha}\big(-z_{\ell}^{+}s^{\alpha}\big))d B(s)\notag\\
&\stackrel{d}{=}\int_{0}^{t} \psi^{2\alpha}_{\ell,\alpha}(s)dB(s)
\end{align}
and similarly 
\begin{align}\label{neqpsi2}
{_0}\calJ_{t}^{2\alpha}\int_{0}^{t} \psi_{\ell,\alpha}( t-s) d B(s)
&\stackrel{d}{=}\int_{0}^{t} \psi^{3\alpha}_{\ell,\alpha}(s)dB(s),
\end{align}
where 
\begin{align}\label{psi_balpha}
   \psi^{b}_{\ell,\alpha}(t):= \dfrac{\gamma}{M_\ell}t^{b-1}(E_{\alpha,b}\big(-z_{\ell}^{-}t^{\alpha}\big)-E_{\alpha,b}\big(-z_{\ell}^{+}t^{\alpha}\big)). 
\end{align}
\end{rem}
\section{Spectral solutions to the model}\label{specsol}
\subsection{Solution of the homogeneous equation}\label{Sec_sol_Hom}
Consider a random field $U^{H}(t)\in L_2(\Omega\times\bS^2),\ t\in (0,\infty)$. We say that the field $U^H$ satisfies the equation \eqref{Hom}, in the $L_2(\Omega\times\bS^2)$ sense, if for $t>0$, there holds
\[
	\sup_{t > 0} \bE \Bigg[\left\| \calE U^H(t) \right\|^2_{L_2(\bS^2)}\Bigg] = 0.
\]	

In this section, we derive the solution $U^{H}(t)$, $t\geq0$, to \eqref{Hom}, subject to the random initial conditions $U^{H}(0)=\xi$ and $\partial U^H/\partial t = 0$ at $t=0$.
It is worth noting that the solution to the homogeneous equation \eqref{Hom} is derived in \cite{Leonenko}, where complex coefficients $\widehat{\xi}_{\ell,m}$ were used, corresponding to their use of complex spherical harmonics. With the adaptation discussed in Remark \ref{randrem}, we adjust these conditions to accommodate real spherical harmonics. Consequently, the solution $U^{H}$, in the $L_2(\Omega\times\bS^2)$ sense, is expressed by the following series expansion:
\begin{align}\label{homo}
U^{H}(t) = \sum_{\ell=0}^\infty \sum_{m=-\ell}^{\ell} \widehat{\xi}_{\ell,m}F_{\ell,\alpha}(t) Y_{\ell,m}, \quad t\in[0,\infty),
\end{align}
where $F_{\ell,\alpha}(\cdot)$ is given by \eqref{Fl} only for the case $\varkappa\notin\N$ and $\ell \neq \varkappa$. However, it's crucial to emphasize that the expansion provided in \cite{Leonenko} is not applicable when $\ell = \varkappa$. In this paper, we shall derive the solution for all $\ell \in \N_0$ and $\varkappa\in\N$. To avoid repeating the proof provided in \cite{Leonenko}, we will provide a detailed proof only for the case $\ell=\varkappa$.

We seek a solution, in $L_2(\Omega\times\bS^2)$, to \eqref{Hom} in the form
	\begin{align*}
		U^{H}(t) = \sum_{\ell=0}^\infty \sum_{m=-\ell}^{\ell} \widehat{U^H}_{\ell,m}(t) Y_{\ell,m}, \quad t\in[0,\infty),
	\end{align*}
	where the random coefficients $\widehat{U^H}_{\ell,m}$, $\ell\in\N_{0}$, $m=-\ell,\dots,\ell$, are given by
	\begin{align}\label{four-hom}
		\widehat{U^H}_{\ell,m}(t)=\int_{\bS^2}U^{H}(\bsx,t)Y_{\ell,m}(\bsx)\mu(d\bsx),\quad t\in[0,\infty).
	\end{align}
Following the proof of \cite[Theorem 3]{Leonenko}, with \eqref{Homcond1}, we arrive at
\begin{align}\label{Eq15}
   \widehat{U^H}_{\ell,m}(t)=\xi_{\ell,m}\Big(1-c^2k^2\lambda_\ell t^{2\alpha}\sum_{r=0}^{\infty}\big(-c^2k^2\lambda_{\ell}t^{2\alpha}\big)^r\; E_{\alpha,1+2\alpha(r+1)}^{r+1}\big(-c^2\gamma^{-1}t^\alpha\big)\Big). 
\end{align}

Let $\ell=\varkappa$. Then by solving \eqref{z1andz2} for $z_1(t)$ and $z_2(t)$ gives
$z_1(t)=z_2(t)=-z_{\varkappa}t^{\alpha}$, where $z_{\varkappa}$ is defined by \eqref{za}. 
Then using the following relation \cite[Corollary 3.3]{Soubhia2010}
\begin{align}\label{1equalz1}
    \sum_{r=0}^{\infty}(-x^2)^r E_{a,2ar+b}^{r+1}(2x)=E_{a,b}(x)+x\dfrac{d}{dx}E_{a,b}(x)
\end{align}
and \cite[equation (4.3.1)]{Gorenflo2014} we can write \eqref{Eq15}, for $\ell=\varkappa$, as
\begin{align}\label{kapequalell}
    \widehat{U^H}_{\varkappa,m}(t)&=\xi_{\varkappa,m}\Big(1-(z_{\varkappa}t^{\alpha})^2\sum_{r=0}^{\infty}\big(-(z_{\varkappa}t^{\alpha})^2\big)^r\; E_{\alpha,1+2\alpha(r+1)}^{r+1}\big(-2z_{\varkappa}t^{\alpha}\big)\Big)\notag\\
    &=\xi_{\varkappa,m}\Big(1-(z_{\varkappa}t^{\alpha})^2\Big[E_{\alpha,1+2\alpha}(-z_{\varkappa}t^{\alpha})-z_{\varkappa}t^{\alpha}E_{\alpha,1+3\alpha}^{2}(-z_{\varkappa}t^{\alpha})\Big]\Big),
\end{align}
where $E_{a,b}^{2}(\cdot)$ is the three-Parametric Mittag-Leffler function (see \eqref{qMitt}).

Using \cite[relations (5.1.12)-(5.1.14)]{Gorenflo2014} then \eqref{kapequalell} becomes
\begin{align}\label{FA}
    \widehat{U^H}_{\varkappa,m}(t)&=\xi_{\varkappa,m}\Big(1-(z_{\varkappa}t^{\alpha})^2\Big[E_{\alpha,1+2\alpha}(-z_{\varkappa}t^{\alpha})-z_{\varkappa}t^{\alpha}E_{\alpha,1+3\alpha}^{2}(-z_{\varkappa}t^{\alpha})\Big]\Big)\notag\\
    &= \xi_{\varkappa,m}\Big(1+z_{\varkappa}t^{\alpha}\Big[E_{\alpha,\alpha+1}^{2}(-z_{\varkappa}t^{\alpha})-E_{\alpha,\alpha+1}(-z_{\varkappa}t^{\alpha})\Big]\Big)\notag\\
    &=\xi_{\varkappa,m}\Big(E_{\alpha,1}(-z_{\varkappa}t^{\alpha})+\dfrac{1}{\alpha}z_{\varkappa}t^{\alpha}E_{\alpha,1}(-z_{\varkappa}t^{\alpha})\Big)\notag\\
    &= \xi_{\varkappa,m}E_{\alpha,1}(-z_{\varkappa}t^{\alpha})\Big(1+\dfrac{1}{2\alpha}c^2\gamma^{-1}t^{\alpha}\Big),
\end{align}
where the last step used \eqref{za}.

Let $U_{L}^{H}$ be the truncated field of $U^H$. We define $V_{L}^{H}$ as
\begin{align}\label{VL}
		V_{L}^{H}(t):&= {^C}D^{2\alpha}U^{H}_L(t)+ {^C}D^{\alpha}U^{H}_L(t)\notag\\
  &=\sum_{\ell=1}^{L}\sum_{m=-\ell}^\ell
\Big(\dfrac{1}{c^2} {^C}D_t^{2\alpha}F_{\ell,\alpha}(t)+\dfrac{1}{\gamma}{^C}D_t^{\alpha}F_{\ell,\alpha}(t)\Big) \widehat{\xi}_{\ell,m} Y_{\ell,m}
	\end{align}
and $V^{H}$ as
\begin{equation}\label{eq:defVH}
V^{H}(t):= \sum_{\ell=1}^{\infty}\sum_{m=-\ell}^\ell
\Big(\dfrac{1}{c^2} {^C}D_t^{2\alpha}F_{\ell,\alpha}(t)+\dfrac{1}{\gamma}{^C}D_t^{\alpha}F_{\ell,\alpha}(t)\Big) \widehat{\xi}_{\ell,m} Y_{\ell,m},
\end{equation}
where $F_\ell,\ \ell\in\N$, are defined in \eqref{Fl}.
\begin{lem}\label{unif-sure}
	Let $\xi$ be a centered, $2$-weakly isotropic Gaussian random field on $\bS^2$. Let $\{\calC_{\ell}:\ell\in\N_{0}\}$, the angular power spectrum of $\xi$, satisfy 
\begin{equation}\label{eq:condCell}
\sum_{\ell=0}^{\infty} \lambda_{\ell}(2\ell+1)\calC_{\ell}<\infty.
\end{equation}
Let $t_0>0$ be given. Then for $t\geq t_0$, $V_{L}^{H}$ is convergent to $V^H$ in $L_{2}(\Omega\times\bS^2)$ as $L\to\infty$, that is
\[
\sup_{t \geq t_0} \bE \left\| V_{L}^{H}(t)-V^{H}(t) \right\|^2_{L_2(\bS^2)} 
\to 0,\quad  \text{ as } L \to \infty.
\]
\end{lem}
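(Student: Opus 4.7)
The plan is to expand the difference $V^H_L(t) - V^H(t)$ as a truncation tail in the spherical harmonics basis, exploit Parseval's identity together with the uncorrelatedness of the Fourier coefficients $\widehat{\xi}_{\ell,m}$, and then apply the pointwise upper bound for the combined Caputo derivative of $F_{\ell,\alpha}$ that was already established around equation \eqref{dF1}.

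First, I would use the orthonormality of the real spherical harmonics $\{Y_{\ell,m}\}$ to write
\[
\bigl\| V^H_L(t)-V^H(t)\bigr\|^2_{L_2(\bS^2)} = \sum_{\ell=L+1}^{\infty}\sum_{m=-\ell}^{\ell} \Bigl(\tfrac{1}{c^2}{^C}D_t^{2\alpha}F_{\ell,\alpha}(t)+\tfrac{1}{\gamma}{^C}D_t^{\alpha}F_{\ell,\alpha}(t)\Bigr)^2 \bigl(\widehat{\xi}_{\ell,m}\bigr)^2 .
\]
Taking expectation and using the property \eqref{cellm}, namely $\bE[\widehat{\xi}_{\ell,m}\widehat{\xi}_{\ell',m'}]=\calC_{\ell}\delta_{\ell\ell'}\delta_{mm'}$, the sum over $m$ collapses into a factor of $(2\ell+1)$, giving
\[
\bE\bigl\| V^H_L(t)-V^H(t)\bigr\|^2_{L_2(\bS^2)} = \sum_{\ell=L+1}^{\infty}(2\ell+1)\calC_{\ell}\Bigl(\tfrac{1}{c^2}{^C}D_t^{2\alpha}F_{\ell,\alpha}(t)+\tfrac{1}{\gamma}{^C}D_t^{\alpha}F_{\ell,\alpha}(t)\Bigr)^2.
\]

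Next, I would invoke identity \eqref{NewUp}, which gives $\tfrac{1}{c^2}{^C}D_t^{2\alpha}F_{\ell,\alpha}+\tfrac{1}{\gamma}{^C}D_t^{\alpha}F_{\ell,\alpha} = -k^2 \lambda_\ell F_{\ell,\alpha}$, combined with the upper bound \eqref{F_upper}, $|F_{\ell,\alpha}(t)| \le \mathcal{H}(t)/\sqrt{\lambda_\ell}$ for $\ell \ge 1$, to conclude that the squared bracket is bounded by $k^4 \lambda_\ell \mathcal{H}(t)^2$. This reduces the task to controlling $\sum_{\ell=L+1}^{\infty}(2\ell+1)\lambda_\ell\calC_{\ell} \cdot \mathcal{H}(t)^2$ uniformly in $t\ge t_0$.

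Since $\mathcal{H}(t) = C_0' + C_1' t^{-1} + C_2' t^{-\alpha}$ is monotonically decreasing on $(0,\infty)$, we have $\sup_{t\ge t_0}\mathcal{H}(t)^2 = \mathcal{H}(t_0)^2 < \infty$. Therefore,
\[
\sup_{t\ge t_0}\bE\bigl\| V^H_L(t)-V^H(t)\bigr\|^2_{L_2(\bS^2)} \le k^4\,\mathcal{H}(t_0)^2 \sum_{\ell=L+1}^{\infty}(2\ell+1)\lambda_\ell\calC_\ell .
\]
The right-hand side is the tail of a convergent series by the hypothesis \eqref{eq:condCell}, so it tends to zero as $L\to\infty$, completing the argument.

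I do not anticipate a genuine obstacle here: the machinery (Parseval in $L_2(\bS^2)$, diagonal covariance of $\widehat{\xi}_{\ell,m}$, and the $\sqrt{\lambda_\ell}\mathcal{H}(t)$ bound on the mixed Caputo derivative) is already in place. The only minor point of care is the case $\ell=\varkappa$, where $F_{\ell,\alpha}$ has a different form \eqref{Fl}; however, the upper bound \eqref{F_upper} and hence \eqref{dF1} apply uniformly for $\ell\ge 1$, so the single possibly exceptional index does not affect the tail estimate. The reason for requiring $t_0 > 0$ (rather than $t_0 = 0$) is exactly the singular behaviour of $\mathcal{H}(t)$ near $t=0$, which is handled here simply by bounding $\mathcal{H}$ by its value at $t_0$.
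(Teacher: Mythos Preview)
Your argument is correct and essentially matches the paper's proof: both rely on Parseval, the covariance identity \eqref{cellm}, the bound \eqref{dF1} (equivalently \eqref{NewUp} with \eqref{F_upper}), and the monotonicity of $\mathcal{H}$ to pull out $\mathcal{H}(t_0)^2$ and reduce to the tail of the series in \eqref{eq:condCell}. The only cosmetic difference is that the paper first shows $\{V^H_L\}$ is Cauchy and then defines $V^H$ as the limit, whereas you work directly with the tail $V^H-V^H_L$; since the same estimate (with $L=0$) shows $V^H\in L_2(\Omega\times\bS^2)$, this is not a genuine distinction.
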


\begin{proof}
Let $t\geq t_0$ and $L<L^{\prime}$. Then by Parseval's formula and \eqref{dF1}, we obtain
\begin{align}
\bE \Big[\|V^H_L(t) - V^H_{L^{\prime}}(t) \|^2_{L_2(\bS^2)}\Big]
&= \bE \Big[\sum_{\ell=L+1}^{L^{\prime}}
\sum_{m=-\ell}^{\ell}
|\widehat{\xi}_{\ell,m}|^2 \notag\\
&\times \Big|\dfrac{1}{c^2} {^C}D_t^{2\alpha}F_{\ell,\alpha}(t)+\dfrac{1}{\gamma}{^C}D_t^{\alpha}F_{\ell,\alpha}(t)\Big|^2\Big] \nonumber\\
&\le\bE \Big[\sum_{\ell=L+1}^{L^{\prime}}
\sum_{m=-\ell}^{\ell}
\Big|\mathcal{H}(t)\sqrt{\lambda_\ell}\Big|^2
|\widehat{\xi}_{\ell,m}|^2  \Big]\nonumber\\
&\le\sum_{\ell=L+1}^{L^{\prime}}
(\mathcal{H}(t))^2
\lambda_\ell(2\ell+1) \calC_{\ell} 
\nonumber\\
&\le (\mathcal{H}(t_0))^2
\sum_{\ell=L+1}^{L^{\prime}} \lambda_\ell(2\ell+1) \calC_{\ell}\label{eq:cauchyCond},
\end{align}
where the function $\mathcal{H}(\cdot)$ is defined in \eqref{updf1}.
Using the condition \eqref{eq:condCell} with a given $\epsilon>0$, there is 
$L_0$ independent of $t$ such that for $L,L^{\prime}  \ge L_0$, 
the RHS of \eqref{eq:cauchyCond} is smaller than $\epsilon$. So
$\{V_L^H\}$ is a Cauchy sequence in $L_2(\Omega \times \bS^2)$ and hence it is convergent. We define the limit of $V_L^H(t)$ as $L \to \infty$ as in \eqref{eq:defVH}.

Note that by the condition \eqref{eq:condCell} and the upper bound \eqref{dF1} we obtain
\[
\bE \Big[\|V^H(t)\|^2_{L_2(\bS^2)}\Big]\le (\mathcal{H}(t_0))^2
\sum_{\ell=0}^{\infty} \lambda_\ell(2\ell+1) \calC_{\ell}<\infty,
\]
which guarantees that $V^H$ is well-defined in the $L_2(\Omega\times\bS^2)$ sense. Thus the proof is complete.
\end{proof}

\begin{rem}\label{remGL}
  Let
\begin{align}\label{GL}
    G^H_L(t) := k^2\Delta_{\bS^2} U^{H}_L(t)=\sum_{\ell=1}^L (-\lambda_{\ell}) F_{\ell,\alpha}(t)
\sum_{m=-\ell}^\ell \widehat{\xi}_{\ell,m} Y_{\ell,m}.
\end{align}
Using the upper bounds {\rm\cite[(24) and (25)]{Leonenko}} we can show that 
$\{G_L^H\}$ is a Cauchy sequence in $L_2(\Omega \times \bS^2)$ and hence it is convergent. We define the limit of $G_L^H$ as $L \to \infty$ by
\begin{align}\label{G}
    G^H(t):=\sum_{\ell=1}^\infty \sum_{m=-\ell}^\ell (-\lambda_{\ell}) F_{\ell,\alpha}(t)\widehat{\xi}_{\ell,m} Y_{\ell,m},
\end{align}
which is convergent in $L_2(\Omega \times \bS^2)$.  
\end{rem}

\begin{prop}\label{Theo1}
Let the angular power spectrum $\{\calC_{\ell}:\ell\in\N_{0}\}$ of the isotropic Gaussian random field $\xi$ satisfy assumption \eqref{eq:condCell}.
Then the random field $U^{H}$ defined by \eqref{homo} satisfies $\calE U^H=0$,
in the $L_2(\Omega\times\bS^2)$ sense, under the initial conditions \eqref{Homcond1}.
In particular, for a given $t_0>0$, there holds
\[
	\sup_{t > t_0} \bE \Bigg[\left\| \calE U^H(t) \right\|^2_{L_2(\bS^2)}\Bigg] = 0.
\]	
\end{prop}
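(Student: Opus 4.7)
The plan is to reduce the PDE claim to the scalar identity \eqref{NewUp} established earlier and then pass to the $L_{2}(\Omega\times\bS^{2})$ limit. First I would work at the truncation level
\[
U^{H}_{L}(t)=\sum_{\ell=0}^{L}\sum_{m=-\ell}^{\ell}\widehat{\xi}_{\ell,m}\,F_{\ell,\alpha}(t)\,Y_{\ell,m},
\]
which is a finite spherical-harmonic sum, so $\calE$ acts term by term. Using $-\Delta_{\bS^{2}}Y_{\ell,m}=\lambda_{\ell}Y_{\ell,m}$ together with \eqref{NewUp}, the coefficient of each $Y_{\ell,m}$ collapses to $\widehat{\xi}_{\ell,m}\bigl(\tfrac{1}{c^{2}}{^{C}}D_{t}^{2\alpha}F_{\ell,\alpha}(t)+\tfrac{1}{\gamma}{^{C}}D_{t}^{\alpha}F_{\ell,\alpha}(t)+k^{2}\lambda_{\ell}F_{\ell,\alpha}(t)\bigr)=0$; hence $\calE U^{H}_{L}(t)\equiv 0$ identically and $\bE\|\calE U^{H}_{L}(t)\|_{L_{2}(\bS^{2})}^{2}=0$ for every $L\ge 1$ and $t>0$.

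The second step is to let $L\to\infty$. I would decompose $\calE U^{H}_{L}=V^{H}_{L}-G^{H}_{L}$ in the notation of \eqref{VL} and Remark \ref{remGL}. Lemma \ref{unif-sure} provides $V^{H}_{L}\to V^{H}$ in $L_{2}(\Omega\times\bS^{2})$ uniformly on $\{t\ge t_{0}\}$, while Remark \ref{remGL} gives the analogous convergence $G^{H}_{L}\to G^{H}$. Since the truncated equality $V^{H}_{L}=G^{H}_{L}$ holds for every $L$, the limits must coincide, i.e.\ $V^{H}=G^{H}$ in $L_{2}(\Omega\times\bS^{2})$. Identifying $V^{H}$ with the time-derivative part of $\calE U^{H}$ and $G^{H}$ with $k^{2}\Delta_{\bS^{2}}U^{H}$ in the $L_{2}$ sense delivers $\calE U^{H}(t)=0$ in $L_{2}(\Omega\times\bS^{2})$ uniformly on $t\ge t_{0}$, which is exactly the claimed sup vanishing for any $t_{0}>0$.

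For the initial conditions, I would evaluate $F_{\ell,\alpha}(0)$ and the behavior of $\partial_{t}F_{\ell,\alpha}$ at the origin directly from \eqref{Fl}--\eqref{F2} using $E_{\alpha,1}(0)=1$ and the cancellations facilitated by $z_{\ell}^{+}+z_{\ell}^{-}=c^{2}/\gamma$ together with $z_{\ell}^{+}-z_{\ell}^{-}=c^{2}M_{\ell}/\gamma$; these yield $F_{\ell,\alpha}(0)=1$ and $\partial_{t}F_{\ell,\alpha}|_{t=0}=0$ for every $\ell\in\N_{0}$ (with the $\ell=\varkappa$ case handled separately via the explicit form in \eqref{Fl}). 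Substituting into \eqref{homo} and invoking Parseval together with Assumption \ref{Ass_cell} recovers $U^{H}(0)=\xi$ and $\partial_{t}U^{H}|_{t=0}=0$ in $L_{2}(\Omega\times\bS^{2})$.

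The principal obstacle is the limit-exchange in the second step: ${^{C}}D_{t}^{2\alpha}$ is nonlocal in time and does not commute with arbitrary $L_{2}$-limits of spherical-harmonic series. This is precisely what the stronger summability \eqref{eq:condCell} combined with the uniform derivative bound \eqref{dF1} is designed to handle -- they make $\{V^{H}_{L}\}$ Cauchy in $L_{2}(\Omega\times\bS^{2})$ uniformly on $t\ge t_{0}>0$. The restriction to $t>t_{0}$ rather than $t>0$ is forced by the blow-up of the envelope $\mathcal{H}(t)$ from \eqref{updf1} as $t\downarrow 0$, which is why the conclusion is phrased as a supremum over $t>t_{0}$.
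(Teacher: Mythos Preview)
Your proof is correct and follows essentially the same route as the paper: both show $\calE U_L^H=V_L^H-G_L^H=0$ at each finite truncation level via \eqref{NewUp}, then pass to the $L_2(\Omega\times\bS^2)$ limit using Lemma~\ref{unif-sure} and Remark~\ref{remGL}. Your write-up is actually more complete, since you explicitly verify the initial conditions \eqref{Homcond1} (which the paper's proof omits) and you articulate why the restriction $t\ge t_0>0$ is needed.
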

\begin{proof}
 Let us fix $L\geq1$. By \eqref{VL} and \eqref{GL} with the help of \eqref{NewUp} we obtain  
 \[
 \bE \Big[ \left\|V^H_L(t)-G^H_L(t)\right\|^2_{L_2(\bS^2)}\Big]=0.
 \]
By the triangle inequality, we obtain
 \begin{align}
&I^H(t):= \bE \Bigg[\left\| \calE U^H(t) \right\|^2_{L_2(\bS^2)}\Bigg]\notag\\
&=\bE \Bigg[\left\| \frac{1}{c^2}{^C}D_t^{2\alpha} U^H(t) +\frac{1}{\gamma} {^C}D_t^{\alpha}U^H(t) - k^2\Delta_{\bS^2} U^H(t) \right\|^2_{L_2(\bS^2)}\Bigg] \notag\\
&\leq  2 \bE \Big[ \left\|V^H(t)-V^H_L(t)\right\|^2_{L_2(\bS^2)}\Big]+2 \bE \Big[ \left\|G^H(t)-G^H_L(t)\right\|^2_{L_2(\bS^2)}\Big].
 \end{align}
Thus, the result follows by Lemma \ref{unif-sure} and Remark \ref{remGL}.
\end{proof}

The following result shows that the solution $U^{H}$ to the equation \eqref{Hom} is a centered, 2-weakly isotropic Gaussian random field.
 The angular power spectrum $\{\calC_{\ell}\}$ of $\xi$ is assumed to satisfy the condition 
 \eqref{eq:condCell}. To guarantee that the condition is satisfied we assume that
$\{\calC_{\ell}:\ell\in\N_{0}\}$ decays algebraically with order $\kappa_1>4$, i.e., there exist constants $\widetilde{D},\widetilde{C}>0$ such that
\begin{align}\label{New-Cl}
	\calC_{\ell}\leq\begin{cases} 
		\widetilde{D}, & \ell=0, \\
		\widetilde{C}\ell^{-\kappa_1}, & \ell\geq1,\ \kappa_1>4.
	\end{cases}
\end{align}
For $\kappa_1>4$ and $\widetilde{C}>0$, we let
\begin{align}\label{Ckappa}
	\widetilde{C}_{\kappa_1}(\varkappa):=\bigg(\widetilde{C}\Big(\frac{2\lfloor \varkappa\rfloor^{2-\kappa_1}}{\kappa_1-2}+\frac{\lfloor \varkappa\rfloor^{1-\kappa_1}}{\kappa_1-1}\Big)\bigg)^{1/2},
\end{align}
where $\lfloor\cdot\rfloor$ is the floor function.

\begin{prop}\label{Gausshomo}
Let the field $U^{H}$, given in {\rm(\ref{homo})}, be the solution to the equation 
$\calE U^H = 0$
under the initial conditions: (i) $\partial U^H/\partial t = 0$ at $t=0$,
(ii) $U^{H}(0)= \xi$, where $\xi$ is a centered, $2$-weakly isotropic Gaussian random field on $\bS^2$. Let $\{\calC_{\ell}:\ell\in\N_{0}\}$, the angular power spectrum of $\xi$, satisfy \eqref{New-Cl}. Then, for $t\in[0,\infty)$, $U^{H}(t)$ is a centred, $2$-weakly isotropic Gaussian random field on $\bS^2$, and its random coefficients
\begin{align}\label{xile}
		\widehat{U^{H}}_{\ell,m}(t)=F_{\ell,\alpha}(t) \widehat{\xi}_{\ell,m},
\end{align} 
satisfy for $\ell,\ell^\prime\in\N_{0}$, $m=-\ell,\dots,\ell$ and $m^\prime=-\ell^\prime,\dots,\ell^\prime$, 
\begin{align}\label{var-xile}
		\bE \left[\widehat{U^{H}}_{\ell,m}(t)\widehat{U^{H}}_{\ell^\prime,m^\prime}(t)\right]=|F_{\ell,\alpha}(t)|^2 \calC_{\ell}\delta_{\ell\ell^\prime}\delta_{mm^\prime},
	\end{align}
where $F_{\ell,\alpha}(t)$ is given by \eqref{Fl} 
and $\delta_{\ell\ell^\prime}$ is the Kronecker delta function.
\end{prop}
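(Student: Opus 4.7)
My plan is to establish the claim in four short steps, leveraging the series representation \eqref{homo} together with the orthonormality of the spherical harmonics, the Karhunen--Loève structure of $\xi$, and the closure of the Gaussian class under $L_2$-limits.

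First, I would extract the random coefficients. Since $\{Y_{\ell,m}\}$ is an orthonormal basis of $L_2(\bS^2)$ and $F_{\ell,\alpha}(t)$ is deterministic (real, in view of the discussion after \eqref{F2} for $\ell>\varkappa$ where $z_\ell^{\pm}$ and $M_\ell$ are complex conjugate/purely imaginary in matching ways), applying $\langle\cdot,Y_{\ell,m}\rangle_{L_2(\bS^2)}$ termwise to \eqref{homo} yields \eqref{xile}. A direct computation using \eqref{cellm} then gives
\begin{align*}
\bE\!\left[\widehat{U^{H}}_{\ell,m}(t)\widehat{U^{H}}_{\ell',m'}(t)\right]
&= F_{\ell,\alpha}(t)F_{\ell',\alpha}(t)\,\bE\!\left[\widehat{\xi}_{\ell,m}\widehat{\xi}_{\ell',m'}\right]\\
&= F_{\ell,\alpha}(t)F_{\ell',\alpha}(t)\,\calC_{\ell}\delta_{\ell\ell'}\delta_{mm'}
= |F_{\ell,\alpha}(t)|^{2}\calC_{\ell}\delta_{\ell\ell'}\delta_{mm'},
\end{align*}
which is \eqref{var-xile}. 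The centering $\bE[\widehat{U^{H}}_{\ell,m}(t)]=F_{\ell,\alpha}(t)\bE[\widehat{\xi}_{\ell,m}]=0$ is immediate.

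Second, I would show Gaussianity of $U^{H}(t)$ pointwise and as a finite-dimensional vector. For any finite set of points $\bsx_1,\dots,\bsx_k\in\bS^2$ and any $L\geq1$, the truncated vector $(U^{H}_L(\bsx_1,t),\dots,U^{H}_L(\bsx_k,t))$ is a finite linear combination of the jointly Gaussian family $\{\widehat{\xi}_{\ell,m}\}$, hence jointly Gaussian. Under assumption \eqref{New-Cl} (which implies \eqref{eq:condCell}) and the upper bound \eqref{F_upper}, the series \eqref{homo} converges in $L_2(\Omega\times\bS^2)$, and by evaluating at each $\bsx_i$ and using the pointwise decay of $|F_{\ell,\alpha}(t)|^2(2\ell+1)\calC_{\ell}$ one obtains convergence of the vector $(U^H_L(\bsx_1,t),\dots,U^H_L(\bsx_k,t))$ in $L_2(\Omega)$ to $(U^H(\bsx_1,t),\dots,U^H(\bsx_k,t))$. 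Since $L_2$-limits of jointly Gaussian vectors are jointly Gaussian (via convergence of characteristic functions), $U^{H}(t)$ is a Gaussian random field on $\bS^2$.

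Third, I would verify $2$-weak isotropy by computing the covariance function and invoking the addition theorem \eqref{addition}. Using \eqref{var-xile} together with dominated convergence to interchange sum and expectation,
\begin{align*}
\bE\!\left[U^{H}(\bsx,t)U^{H}(\bsy,t)\right]
&= \sum_{\ell=0}^{\infty}\sum_{m=-\ell}^{\ell}|F_{\ell,\alpha}(t)|^{2}\calC_{\ell}Y_{\ell,m}(\bsx)Y_{\ell,m}(\bsy)\\
&= \sum_{\ell=0}^{\infty}(2\ell+1)|F_{\ell,\alpha}(t)|^{2}\calC_{\ell}P_{\ell}(\bsx\cdot\bsy),
\end{align*}
which depends on $(\bsx,\bsy)$ only through $\bsx\cdot\bsy$. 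Combined with $\bE[U^H(\bsx,t)]=0$, this gives invariance of the first two moments under any $\rho\in SO(3)$, establishing $2$-weak isotropy.

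I expect the only mildly technical point to be the passage of Gaussianity to the $L_2$-limit and the justification of termwise computations; both follow from the standard characteristic-function argument once convergence in $L_2(\Omega)$ of the finite-dimensional distributions is checked using \eqref{F_upper} and the decay \eqref{New-Cl}. The rest is bookkeeping with the orthogonality relations \eqref{cellm} and the addition theorem \eqref{addition}.
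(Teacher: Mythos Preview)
Your proposal is correct and follows essentially the same route as the paper: identify the coefficients via \eqref{cellm}, compute the covariance using the addition theorem \eqref{addition}, and pass Gaussianity from the truncations to the limit. The paper carries out the last step by explicitly writing the characteristic function of the partial sums $U_n^H(t)=\sum_{\ell\le n}T_\ell(t)$ and invoking L\'evy's continuity theorem, together with a case-by-case bound on $\mathrm{Var}[U^H(t)]=\sum_\ell(2\ell+1)|F_{\ell,\alpha}(t)|^2\calC_\ell$ (splitting $\ell\le\lfloor\varkappa\rfloor$ and $\ell>\lfloor\varkappa\rfloor$, and treating $\varkappa\in\N$ separately via \eqref{FAupper}); your appeal to ``$L_2$-limits of Gaussian vectors are Gaussian'' is exactly this argument in compressed form, and your treatment of joint finite-dimensional distributions is in fact slightly more complete than the paper's single-point version.
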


\begin{proof}
Let $t\in[0,\infty)$. Since $\xi$ is centred we have $\bE[U^{H}(t)]=0$.  By {\rm(\ref{homo})} we can write
	\begin{align*}
		\bE \left[U^{H}(\bsx,t)U^{H}(\bsy,t)\right]&=\sum_{\ell=0}^\infty\sum_{\ell^\prime=0}^\infty \sum_{m=-\ell}^{\ell}
		\sum_{m\prime=-\ell^\prime}^{\ell^\prime} F_{\ell,\alpha}(t) F_{\ell^{\prime},\alpha}(t)\\ &\times\bE\left[\widehat{\xi}_{\ell,m}\widehat{\xi}_{\ell^\prime,m^\prime}\right] Y_{\ell,m}(\bsx)Y_{\ell^\prime,m^\prime}(\bsy).
	\end{align*}
	Since  $\xi$ is a centred, 2-weak isotropic Gaussian random field we have $\bE\left[\widehat{\xi}_{\ell,m}\widehat{\xi}_{\ell^\prime,m^\prime}\right]=\calC_{\ell}  \delta_{\ell \ell^\prime} \delta_{m m^\prime}$, and by the addition theorem (see equation \eqref{addition}) and \cite[Corollary 2.1]{Alodat2022}, we have
	\begin{align*}
		\bE\left[U^{H}(\bsx,t)U^{H}(\bsy,t)\right]&=\sum_{\ell=0}^\infty |F_{\ell,\alpha}(t)|^2\calC_{\ell}\sum_{m=-\ell}^{\ell} Y_{\ell,m}(\bsx)Y_{\ell,m}(\bsy)\\
		&=\sum_{\ell=0}^\infty |F_{\ell,\alpha}(t)|^2(2\ell+1)\calC_\ell P_{\ell}(\bsx\cdot\bsy),
	\end{align*}
where $P_{\ell}(\cdot)$, $\ell\in\N_{0}$, is the Legendre polynomial of degree $\ell$. As $P_{\ell}(\bsx\cdot\bsy)$ depends only on the inner product of $\bsx$ and $\bsy$, we conclude that the covariance function $\bE\left[U^{H}(\bsx,t)U^{H}(\bsy,t)\right]$ is rotationally invariant. Note that for $\ell,\ell^\prime\in\N_{0}$, $m=-\ell,\dots,\ell$ and $m^\prime=-\ell^\prime,\dots,\ell^\prime$, by the 2-weak isotropy of $\xi$, and \cite[Corollary 2.1]{Alodat2022}, we have	
\begin{align*}
\bE \left[\widehat{U^{H}}_{\ell,m}(t)\widehat{U^{H}}_{\ell^\prime,m^\prime}(t)\right]&=F_{\ell,\alpha}(t) F_{\ell^{\prime},\alpha}(t) \bE\left[\widehat{\xi}_{\ell,m}\widehat{\xi}_{\ell^\prime,m^\prime}\right]\\
&=|F_{\ell,\alpha}(t)|^2 \calC_{\ell}\delta_{\ell\ell^\prime}\delta_{mm^\prime}.
\end{align*}
Now we show that $U^{H}$ is Gaussian. By \eqref{Ml} we have $M_0=1$ and by \eqref{z1z2}, \eqref{F1}, and \eqref{F2}, one obtains $F_{0,\alpha}(t)=1$. Thus, the variance of $U^{H}(t)$, can be written as
\begin{align}\label{Homvar1}
    Var[U^{H}(t)]&= \sum_{\ell=0}^\infty |F_{\ell,\alpha}(t)|^2(2\ell+1)\calC_\ell\notag\\
    &= \sum_{\ell=0}^{\lfloor \varkappa\rfloor} |F_{\ell,\alpha}(t)|^2(2\ell+1)\calC_\ell+\sum_{\ell=\lfloor \varkappa\rfloor+1}^\infty |F_{\ell,\alpha}(t)|^2(2\ell+1)\calC_\ell.
\end{align}
If $\varkappa\notin \N$, then by using \eqref{F_upper} and \eqref{New-Cl}, \eqref{Homvar1} becomes
\begin{align}\label{Homvar12}
    Var[U^{H}(t)]&\leq \sum_{\ell=0}^{\lfloor \varkappa\rfloor} |F_{\ell,\alpha}(t)|^2(2\ell+1)\calC_\ell+\Big(\mathcal{H}(t)\Big)^2 \widetilde{C}\sum_{\ell=\lfloor \varkappa\rfloor+1}^\infty \dfrac{(2\ell+1)\ell^{-\kappa_1}}{\lambda_\ell}\notag\\
    & \le \sum_{\ell=0}^{\lfloor \varkappa\rfloor} |F_{\ell,\alpha}(t)|^2(2\ell+1)\calC_\ell+ \Big(\dfrac{\mathcal{H}(t)}{\sqrt{\lambda_{\lfloor \varkappa\rfloor}}}\Big)^2 \widetilde{C}\sum_{\ell=\lfloor \varkappa\rfloor+1}^\infty (2\ell+1)\ell^{-\kappa_1}.
\end{align}
Since $	\sum_{\ell=0}^{\lfloor \varkappa\rfloor} |F_{\ell,\alpha}(t)|^2(2\ell+1)\calC_\ell<\infty$ and
\begin{align}\label{Vpf}
		\sum_{\ell=\lfloor \varkappa\rfloor+1}^{\infty} (2\ell+1)\ell^{-\kappa_1}&\leq \sum_{\ell=\lfloor \varkappa\rfloor}^{\infty} (2\ell+1)\ell^{-\kappa_1}\notag\\
		&\leq\int_{\lfloor \varkappa\rfloor}^\infty \big(2x^{1-\kappa_1}+x^{-\kappa_1}\big)dx\notag\\
		&\leq  \Big(\frac{2\lfloor \varkappa\rfloor^{2-\kappa_1}}{\kappa_1-2}+\frac{\lfloor \varkappa\rfloor^{1-\kappa_1}}{\kappa_1-1}\Big),
	\end{align}
then by \eqref{Homvar12} and \eqref{Vpf} we get
\begin{align}\label{UHfinit}
    Var[U^{H}(t)]\leq \sum_{\ell=0}^{\lfloor \varkappa\rfloor} |F_{\ell,\alpha}(t)|^2(2\ell+1)\calC_\ell+\Big(\dfrac{\mathcal{H}(t)}{\sqrt{\lambda_{\lfloor \varkappa\rfloor}}}\Big)^2 \widetilde{C}(\varkappa))^2<\infty.
\end{align}

Using the inequality \cite[Theorem 1.6]{Podlubny}
\begin{align*}
E_{\alpha,\beta}(-z)\leq \dfrac{C}{1+z},\quad \alpha<2,\ \beta\in\R,\ C>0,\ z\geq0,
\end{align*}
we have $\big|E_{\alpha}\big(-z_\varkappa t^{\alpha}\big)\big|^2\leq C$ and when $\varkappa\in \N$ we obtain, using \eqref{FA},
\begin{align}\label{FAupper}
    |F_{\varkappa,\alpha}(t)|^2&= \big|E_{\alpha}\big(-z_\varkappa t^{\alpha}\big)\big(1+\frac{1}{2\alpha}c^2\gamma^{-1}t^{\alpha}\big)\big|^2\notag\\
    &\leq C^2\big(1+\frac{1}{2\alpha}c^2\gamma^{-1}t^{\alpha}\big)^2\notag\\
    &\leq C^2\big(1+c^2\gamma^{-1}t^{\alpha}\big)^2,
\end{align}

Thus by \eqref{FA} and \eqref{FAupper} we write, when $\varkappa\in\N$,
\begin{align}\label{Homvar2}
    Var[U^{H}(t)]&= |F_{0,\alpha}(t)|^2\calC_0+|F_{\varkappa,\alpha}(t)|^2(2\varkappa+1)\calC_\varkappa\notag\\
    &+\sum_{\substack{\ell=1 \\ \ell\neq \varkappa}}^{\infty} |F_{\ell,\alpha}(t)|^2(2\ell+1)\calC_\ell.
\end{align}
Using similar steps as those for \eqref{Homvar12}, we have $\sum_{\substack{\ell=1 \\ \ell\neq \varkappa}}^{\infty} |F_{\ell,\alpha}(t)|^2(2\ell+1)\calC_\ell<\infty$. Thus, $ Var[U^{H}(t)]<\infty$.

Let $T_{\ell}$, $\ell\in\N_0$, be defined as 
$
		T_{\ell}(t):=\sum_{m=-\ell}^{\ell}\widehat{U^{H}}_{\ell,m}(t)Y_{\ell,m}= \sum_{m=-\ell}^{\ell}F_{\ell,\alpha}(t) \widehat{\xi}_{\ell,m}Y_{\ell,m}.
$
Since $\widehat{\xi}_{\ell,m}$ are centred, independent random variables (by the 2-weak isotropy of $\xi$), then $T_{\ell}$
	is a Gaussian random variable with mean zero and variance (see Remark 6.13 in \cite{MarPec11}), using \eqref{var-xile}, \eqref{addition}, and properties of spherical harmonics, given by
	\begin{align}\label{var-T}
	    Var[T_{\ell}]=|F_{\ell,\alpha}(t)|^2(2\ell+1)\calC_\ell.
	\end{align}
Taking a sequence of independent random variables $U^H_n$, $n\geq1$, of the form $
U^H_n(t):=\sum_{\ell=0}^{n}T_{\ell}(t),
$
then it is easy to show, using \eqref{var-T}, that
	\begin{align}\label{var-Un}
		Var[U^H_n(t)]=\sum_{\ell=0}^{n}Var[T_{\ell}(t)]=\sum_{\ell=0}^{n}|F_{\ell,\alpha}(t)|^2(2\ell+1)\calC_\ell,
	\end{align}
and, using \eqref{UHfinit} when $\varkappa\in\N$ and \eqref{Homvar2} when $\varkappa\notin\N$, 
	\begin{align}\label{Lim}
		\lim_{n\to\infty}Var[U^H_n(t)]=\sum_{\ell=0}^{\infty}|F_{\ell,\alpha}(t)|^2(2\ell+1)\calC_\ell<\infty.
	\end{align}
	The characteristic function $\varphi_{U^H_n(t)}(r)$ of $U^H_n(t)$ can be written, using \eqref{var-T} and \eqref{var-Un}, as
	\begin{align*}
		\varphi_{U^H_n(t)}(r)=\bE\Big[e^{\mi r U^H_n(t)}\Big]&= \prod_{\ell=0}^{n}\bE\Big[e^{\mi r T_{\ell}(t)}\Big]
		=\prod_{\ell=0}^{n}  \varphi_{T_{\ell}(t)}(r)\\
		&=e^{-\frac{1}{2}r^2 \sum_{\ell=0}^{n}|F_{\ell,\alpha}(t)|^2(2\ell+1)\calC_\ell},   
	\end{align*}
	since $T_{\ell},\ \ell\geq0$, are independent Gaussian random variables.
	
\indent
	Hence,
	\[
	\lim_{n\to\infty} \varphi_{U^H_n(t)}(r)= e^{-\frac{1}{2}r^2 \sum_{\ell=0}^{\infty}|F_{\ell,\alpha}(t)|^2(2\ell+1)\calC_\ell},
	\]
which is the characteristic function of some Gaussian random variable with mean zero and variance $\sum_{\ell=0}^{\infty}|F_{\ell,\alpha}(t)|^2(2\ell+1)\calC_\ell<\infty$ (by \eqref{Lim}). Then by the continuity theorem \cite[Theorem 3.3.6]{Durrett} we conclude that the field $U^{H}$ is Gaussian, thus completing the proof. 
\end{proof}


\begin{rem} \label{rem4}
Let $\{\calC_{\ell}:\ell\in\N_{0}\}$ be the angular power spectrum of $\xi$. Then the Fourier coefficients $\widehat{U^H}_{\ell,m}$ of $U^{H}$ can be written, using \eqref{xile}, as
\begin{align}\label{UH-complexFourier}
		\widehat{U^H}_{\ell,m}(t)&:= \sqrt{\calC_{\ell}}F_{\ell,\alpha}(t)Z_{\ell,m},\quad \ell\in\N_0, m=-\ell,\dots,\ell,
	\end{align}
where $\{Z_{\ell,m},\ \ell\in\N_0,\ m=-\ell,\dots,\ell \}$ is a set of independent, real-valued, standard normally distributed random variables.  
	
\indent
Moreover, from \eqref{xile}, we observe that $\sqrt{\calC_{\ell}}Z_{\ell,m} \stackrel{d} {=} \widehat{\xi}_{\ell,m}$ for all $\ell \in \mathbb{N}_0, m=-\ell,\ldots,\ell$. Thus, the solution $U^{H}(t)$, $t\in[0,\infty)$, can be represented as
\begin{align}\label{New-HomSol}
		U^{H}(t)&= \sum_{\ell=0}^{\infty}\sum_{m=-\ell}^\ell \sqrt{\calC_{\ell}}F_{\ell,\alpha}(t)Z_{\ell,m} Y_{\ell,m},
\end{align}
where the above expansion is convergent in $L_2(\Omega\times\bS^2)$.
\end{rem}
\subsection{Solution of the inhomogeneous equation}\label{Sec_sol_inHom}

We say that $U^I$ is a solution to \eqref{Inhom}, in the $L_2(\Omega\times\bS^2)$ sense, if 
\[
\sup_{t \geq \tau} \bE \left\| \calK U^I(t)-\dfrac{c^2}{\Gamma(2\alpha)}\int_{\tau}^{t} (t-s)^{2\alpha-1}dW_{\tau}(s)\right\|^2_{L_2(\bS^2)} = 0.
\]
In this section, we derive the solution $U^{I}(t)$ for $t\in[0,\infty)$ to \eqref{Inhom} in the $L_2(\Omega\times\bS^2)$ sense, which takes the form
\begin{align*}
		U^{I}(t) = \sum_{\ell=0}^\infty \sum_{m=-\ell}^{\ell} \widehat{U^I}_{\ell,m}(t) Y_{\ell,m}, \quad t\in[0,\infty),
	\end{align*}
	where $
		\widehat{U^I}_{\ell,m}(t)=\int_{\bS^2}U^{I}(\bsx,t)Y_{\ell,m}(\bsx)\mu(d\bsx),\ t\in[0,\infty).
$

	By multiplying both sides of equation \eqref{Inhom} by $Y_{\ell,m}$ and integrating over $\bS^2$, we obtain a set of ordinary differential equations 
	\begin{equation}\label{Eqv1}
	\frac{1}{c^2}{^C}D_t^{2\alpha} \widehat{U^I}_{\ell,m}dt +\frac{1}{\gamma} {^C}D_t^{\alpha} \widehat{U^I}_{\ell,m} dt +k^2\lambda_\ell \widehat{U^I}_{\ell,m}dt= d \widehat{W}_{\ell,m,\tau},
	\end{equation}
with $\widehat{U^I}_{\ell,m}(0) =0,\ \frac{d}{d t}\widehat{U^I}_{\ell,m}(t)|_{t=0}=0$,
where $\widehat{W}_{\ell,m,\tau}$ are the Fourier coefficients of $W_\tau$.

\indent
	If $\widehat{W}_{\ell,m,\tau}$ were deterministic differentiable functions in time, then we could write \eqref{Eqv1} as
	\begin{equation}\label{Nn}
		\frac{1}{c^2}{^C}D_t^{2\alpha} \widehat{U^I}_{\ell,m} +\frac{1}{\gamma} {^C}D_t^{\alpha} \widehat{U^I}_{\ell,m}  +k^2\lambda_\ell \widehat{U^I}_{\ell,m}= \frac{d}{dt} \widehat{W}_{\ell,m,\tau}.
	\end{equation}
	By taking the Laplace transform of \eqref{Nn} we get
\begin{equation}\label{lap}
		\bigg(\frac{1}{c^2}z^{2\alpha} +\frac{1}{\gamma}z^{\alpha}+  k^2\lambda_\ell\bigg) \widetilde{U^I}_{\ell,m} (z) = e^{-z\tau} \widetilde{W}_{\ell,m,0}(z),
\end{equation}
	where $\widetilde{U^I}_{\ell,m}(z)$ and $e^{-z\tau} \widetilde{W}_{\ell,m,0}(z)$ are the Laplace transforms of $\widehat{U^I}_{\ell,m}(t)$ and $\frac{d}{dt}\widehat{W}_{\ell,m,\tau}(t)$ respectively. 
	
	\indent
	Now solving equation \eqref{lap} for $\widetilde{U^I}_{\ell,m} (z)$, we obtain
	\begin{equation}\label{Lap2}
		\widetilde{U^I}_{\ell,m} (z) = e^{-z\tau}\frac{ \widetilde{W}_{\ell,m,0}(z) }{ \frac{1}{c^2}z^{2\alpha} +\frac{1}{\gamma}z^{\alpha}+  k^2\lambda_\ell}.
	\end{equation}
	\noindent
By using the convolution theorem (see \cite{Dyke}, Theorem 3.2) and Lemma \ref{pf:psi}, we could write   
	\[
	\calL^{-1}\Big\{ \frac{ \widetilde{W}_{\ell,m,0}(z) }{ \frac{1}{c^2}z^{2\alpha} +\frac{1}{\gamma}z^{\alpha}+  k^2\lambda_\ell}\Big\}= 	\int_{0}^{t} \psi_{\ell,\alpha}( t-s) \frac{d}{ds} \widehat{W}_{\ell,m,0}(s)ds,
	\]
where $\psi_{\ell,\alpha}( \cdot)$ is given by \eqref{def_psinew}.

\indent
By taking the inverse Laplace transform in \eqref{Lap2} with the help of the second shift theorem \cite[Theorem 2.4]{Dyke}, which results in
\begin{align}\label{InHom-Solmm}
		\widehat{U^I}_{\ell,m} (t) = \mathscr{H}(t-\tau)\int_{0}^{t-\tau} \psi_{\ell,\alpha}( t-\tau-s) \dfrac{d}{ds} \widehat{W}_{\ell,m,0}(s)Y_{\ell,m},
	\end{align}
where $\mathscr{H}(\cdot)$ is the Heaviside unit step function (i.e, $\mathscr{H}(t)=1$, for $t\geq0$, and zero for $t<0$).

\noindent	
Since $\widehat{W}_{\ell,m,0}(\cdot)$ are $1$-dimensional, real-valued Brownian motions which are continuous but nowhere differentiable, we have
	to write instead 
	\[
	\calL^{-1}\Big\{ \frac{ \widetilde{W}_{\ell,m,0}(z) }{ \frac{1}{c^2}z^{2\alpha} +\frac{1}{\gamma}z^{\alpha}+  k^2\lambda_\ell}\Big\}= 	\int_{0}^{t} \psi_{\ell,\alpha}( t-s) d \widehat{W}_{\ell,m,0}(s).
	\]
Hence the solution $U^I(t)$, $t\ge \tau$, becomes
 \begin{align}\label{InHom-Sol}
		U^{I}(t) = \mathscr{H}(t-\tau)\sum_{\ell=0}^\infty \sum_{m=-\ell}^{\ell}\int_{0}^{t-\tau} \psi_{\ell,\alpha}( t-\tau-s) d \widehat{W}_{\ell,m,0}(s)Y_{\ell,m}.
	\end{align}
\begin{ass}
We assume that the angular power spectrum $\{\calA_\ell\}$ satisfies the condition:
\begin{align}\label{seconAl}
    \sum_{\ell=0}^{\infty}
		(2\ell+1)\lambda_{\ell}\calA_{\ell}.
\end{align} 
\end{ass}
\begin{lem}\label{UICaucy}
  Let the condition \eqref{seconAl} hold. Let $U_{L}^{I}(t)$, $t\geq\tau$, be the truncated version of $U^I(t)$. Then $\{U_{L}^{I}(t)\}$, $\{{_0}\calJ_{t}^{\alpha}U_{L}^{I}(t)\}$ and $\{{_0}\calJ_{t}^{2\alpha}\Delta_{\bS^2}U_{L}^{I}(t)\}$, are Cauchy sequences in $L_2(\Omega\times\bS^2)$.
\end{lem}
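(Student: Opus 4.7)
The approach is to exploit the mutual independence of the Fourier components of $W_\tau$ across $(\ell,m)$ to reduce each Cauchy property to the summability of an explicit tail series in $\ell$. For $L<L'$, Parseval's identity on $\bS^2$ and It\^o's isometry give, for instance,
\[
\bE\bigl\| U_{L'}^I(t)-U_L^I(t)\bigr\|^2_{L_2(\bS^2)}
= \sum_{\ell=L+1}^{L'}(2\ell+1)\,\calA_\ell\,\int_{0}^{t-\tau}|\psi_{\ell,\alpha}(s)|^2\,ds
= \sum_{\ell=L+1}^{L'}(2\ell+1)\,\calA_\ell\,\sigma^2_{\ell,t-\tau,\alpha},
\]
and analogous identities for the fractional/Laplace-Beltrami transforms of $U_L^I$, with $\psi_{\ell,\alpha}$ replaced by a function of the form $\psi^{b}_{\ell,\alpha}$ with $b\in\{2\alpha,3\alpha\}$. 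The summability of these tails will then follow from the hypothesis \eqref{seconAl}.

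For the first sequence, I would bound $\sigma^2_{\ell,t-\tau,\alpha}$ directly by Proposition~\ref{Propsigma}: on the tail $\ell>\varkappa$ we have $\sigma^2_{\ell,t-\tau,\alpha}\le C\,\lambda_\ell^{(1-\alpha)/\alpha}$, and since $\alpha\in(\tfrac12,1]$ implies $(1-\alpha)/\alpha\le 1$, this is dominated by $C\,\lambda_\ell$. Hence the tail is controlled by $C\sum_{\ell>L}(2\ell+1)\lambda_\ell\calA_\ell$, which tends to $0$ by \eqref{seconAl}.

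For the second and third sequences, the key commutation tool is Theorem~\ref{fracstochin} (Remark~\ref{frac_psi}): the shifted stochastic integral $\int_0^{t-\tau}\psi_{\ell,\alpha}(t-\tau-s)\,dB(s)$ has the same distribution, after applying ${_0}\calJ_t^{\alpha}$ (respectively ${_0}\calJ_t^{2\alpha}$), as $\int_0^{t-\tau}\psi^{2\alpha}_{\ell,\alpha}(s)\,dB(s)$ (respectively $\int_0^{t-\tau}\psi^{3\alpha}_{\ell,\alpha}(s)\,dB(s)$). For $\{{_0}\calJ_t^{2\alpha}\Delta_{\bS^2}U_L^I\}$, the operator $\Delta_{\bS^2}$ pulls out a factor $-\lambda_\ell$ in front of $Y_{\ell,m}$. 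It\^o's isometry then gives the corresponding tail bound
\[
\sum_{\ell=L+1}^{L'}(2\ell+1)\,\lambda_\ell^{2}\,\calA_\ell\int_{0}^{t-\tau}|\psi^{3\alpha}_{\ell,\alpha}(s)|^{2}\,ds.
\]
To estimate $\int_0^{T}|\psi^{b}_{\ell,\alpha}(s)|^2\,ds$, I would follow the template used in Proposition~\ref{Propsigma}: for $\ell>\varkappa$, use $|M_\ell|^{-2}\le C\lambda_\ell^{-1}$, insert the bounds \eqref{largeA0}--\eqref{smallA0} from Proposition~\ref{BoundE} with $\beta=b$, and perform the rescaling $u=(kc\sqrt{\lambda_\ell})^{1/\alpha}s$. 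A careful accounting of the powers shows that $\int_0^{T}|\psi^{2\alpha}_{\ell,\alpha}|^2\,ds$ decays like $\lambda_\ell^{-1}$ up to a polynomial factor in $T$, while $\int_0^{T}|\psi^{3\alpha}_{\ell,\alpha}|^2\,ds$ decays like $\lambda_\ell^{-2}$. Inserting these into the tails one sees that the second sequence's tail is controlled by $\sum(2\ell+1)\calA_\ell$ and the third by $\sum(2\ell+1)\calA_\ell$ as well, both finite under \eqref{seconAl}. The low-degree regime $\ell\le\varkappa$ (and the exceptional case $\ell=\varkappa$) contributes only finitely many terms and is handled by \eqref{up1}--\eqref{Aequal} and \eqref{sig0}.

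The main obstacle is the bookkeeping in the third sequence: one must verify that the factor $\lambda_\ell^{2}$ produced by $\Delta_{\bS^2}$ is exactly compensated by the $\lambda_\ell^{-2}$ decay coming from $|M_\ell|^{-2}$ together with the scaling of the integral after rescaling $u=(kc\sqrt{\lambda_\ell})^{1/\alpha}s$. This requires checking that the exponents obtained from the Mittag--Leffler bounds in \eqref{largeA0} at $\beta=3\alpha$ (namely $(1-3\alpha)/\alpha<0$ for $\alpha>1/2$) make both the exponentially decaying and the $1/(1+u^\alpha)$ pieces of the integrand integrable with the desired scaling in $\lambda_\ell$; this is where the restriction $\alpha\in(\tfrac12,1]$ genuinely enters.
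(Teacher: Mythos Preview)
Your approach is correct and follows the same overall template as the paper's: Parseval plus It\^o isometry to reduce to a tail sum in $\ell$, Remark~\ref{frac_psi} to commute the fractional integrals with the stochastic ones, and then Mittag--Leffler bounds to control $\int_0^{t-\tau}|\psi^{b}_{\ell,\alpha}(s)|^2\,ds$.

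The only real difference is in how the last integrals are estimated. You propose to mimic the rescaling argument of Proposition~\ref{Propsigma} (substitute $u=(kc\sqrt{\lambda_\ell})^{1/\alpha}s$ and track the powers of $\lambda_\ell$), which yields sharper decay rates than are actually needed. The paper instead takes a shortcut: since $b\in\{2\alpha,3\alpha\}$ gives $(1-b)/\alpha<0$, the bound \eqref{largeA0} in Proposition~\ref{BoundE} already makes $|E_{\alpha,b}(-z_\ell^\pm s^\alpha)|$ uniformly bounded by a constant for $\ell>\varkappa$, so one simply integrates the remaining power $s^{2b-2}$ on $[0,t-\tau]$ to get $C(t-\tau)^{2b-1}$. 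For the second sequence this already gives a tail $\sum(2\ell+1)\calA_\ell$; for the third, the extra factor $\lambda_\ell^2$ from $\Delta_{\bS^2}$ is reduced to $\lambda_\ell$ by the elementary estimate $|M_\ell|^{-2}\le C\lambda_\ell^{-1}$, and \eqref{seconAl} finishes the job. For $\{U_L^I\}$ itself the paper, like you, appeals directly to Proposition~\ref{Propsigma}. Your route works but does more than necessary; the paper's version avoids the rescaling bookkeeping entirely.
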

\begin{proof}
Let $M>L$ be fixed, and consider $\{{_0}\calJ_{t}^{\alpha}U_{L}^{I}(t)\}$. By \eqref{InHom-Sol} and \eqref{fracint} with Remark \ref{frac_psi} we have
\begin{align*}
J_1&:=\bE \|{_0}\calJ_{t}^{\alpha} U^I_L(t) - {_0}\calJ_{t}^{\alpha} U^I_M(t) \|^2_{L_2(\bS^2)}\notag\\
&= \sum_{\ell=L+1}^M
		\sum_{m=-\ell}^{\ell} \calA_{\ell} \|Y_{\ell,m}\|^2_{L_2(\bS^2)}
		\int_{0}^{t-\tau}s^{4\alpha-2}
		\big|(E_{\alpha,2\alpha}\big(-z_{\ell}^{-}s^{\alpha}\big)-E_{\alpha,2\alpha}\big(-z_{\ell}^{+}s^{\alpha}\big))\big|^2 ds\notag\\
&\le C\sum_{\ell=L+1}^M (2\ell+1)\calA_{\ell}\int_{0}^{t-\tau} s^{4\alpha-2}ds= C \dfrac{(t-\tau)^{4\alpha-1}}{4\alpha-1}\sum_{\ell=L+1}^M
		(2\ell+1)\calA_{\ell},
\end{align*}
where the constant $C > 0$ and the third step used Proposition \ref{BoundE}.
Thus, for a given $\epsilon>0$, using the condition \eqref{eq:condAell}, there is $L_0$ independent of $t$ such that for $L,M  \ge L_0$, the above RHS is smaller than $\epsilon$. So
	$\{{_0}\calJ_{t}^{\alpha}U_{L}^{I}(t)\}$ is a Cauchy sequence in $L_2(\Omega \times \bS^2)$ and hence it
	is convergent. We define ${_0}\calJ_{t}^{\alpha}U^{I}(t)$ as $ \lim_{L\to\infty} {_0}\calJ_{t}^{\alpha}U_{L}^{I}(t)$, which can be written, using Remark \ref{frac_psi}, as 
 \begin{align*}
  \lim_{L\to\infty} {_0}\calJ_{t}^{\alpha}U_{L}^{I}(t)  =\sum_{\ell=0}^{\infty}\sum_{m=-\ell}^{\ell}\int_{0}^{t-\tau} \psi^{2\alpha}_{\ell,\alpha}(s) d \widehat{W}_{\ell,m,0}(s)Y_{\ell,m},
 \end{align*}
where $\psi^{b}_{\ell,\alpha}$ are defined by \eqref{psi_balpha}.

Now we consider $\{{_0}\calJ_{t}^{2\alpha}\Delta_{\bS^2}U_{L}^{I}(t)\}$. By Proposition \ref{BoundE} we obtain 
\begin{align*}
J_2:&=\bE \|{_0}\calJ_{t}^{2\alpha}\Delta_{\bS^2}U_{L}^{I}(t) - {_0}\calJ_{t}^{2\alpha}\Delta_{\bS^2}U_{M}^{I}(t) \|^2_{L_2(\bS^2)}\notag\\
&= \sum_{\ell=L+1}^M
		\sum_{m=-\ell}^{\ell} (\lambda_{\ell})^{2}\calA_{\ell} \|Y_{\ell,m}\|^2_{L_2(\bS^2)}
		\int_{0}^{t-\tau}s^{4\alpha-2}
		\big|(E_{\alpha,2\alpha}\big(-z_{\ell}^{-}s^{\alpha}\big)-E_{\alpha,2\alpha}\big(-z_{\ell}^{+}s^{\alpha}\big))\big|^2 ds\notag\\
&\le C\sum_{\ell=L+1}^M (2\ell+1)\lambda_{\ell}\calA_{\ell}\int_{0}^{t-\tau} s^{4\alpha-2}ds\leq C \dfrac{(t-\tau)^{4\alpha-1}}{4\alpha-1}\sum_{\ell=L+1}^M (2\ell+1)\lambda_\ell\calA_{\ell}.
\end{align*}

The condition \eqref{seconAl} guarantees that $\{{_0}\calJ_{t}^{2\alpha}\Delta_{\bS^2}U_{L}^{I}(t)\}$ is a Cauchy sequence in $L_2(\Omega \times \bS^2)$. We define ${_o}\calJ_{t}^{2\alpha}\Delta_{\bS^2}U^{I}(t)$ as $\lim_{L\to\infty} {_0}\calJ_{t}^{2\alpha}\Delta_{\bS^2}U_{L}^{I}(t)$. One can use the upper bound \eqref{Propsigma} to prove that $\{U_{L}^{I}(t)\}$ is a Cauchy sequence. Thus the proof is complete.
\end{proof}

\begin{lem}\label{lemVI}
   Let $V^{I}_{L}(t)$, $t\geq\tau$, be defined as
   \begin{align}\label{VI}
   V^{I}_{L}(t):=\frac{c^2}{\gamma} {_0}\calJ_{t}^{\alpha}U^I_L(t) - (ck)^2{_0}\calJ_{t}^{2\alpha}\Delta_{\bS^2} U^I_L(t),    
   \end{align}
where $U_L^I$ is the truncated field of $U^I$. Then, $V^{I}_{L}(t)$ can be written as
\begin{align}\label{eq:VL}
V^{I}_L(t)= -U^{I}_L(t)+\dfrac{c^2}{\Gamma(2\alpha)}\sum_{\ell=0}^{L}\sum_{m=-\ell}^{\ell}\int_{0}^{t-\tau} s^{2\alpha-1}d \widehat{W}_{\ell,m,0}(s)Y_{\ell,m}.    
\end{align}
\end{lem}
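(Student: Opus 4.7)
The plan is to reduce \eqref{eq:VL} to a mode-by-mode identity on Fourier coefficients, derive from \eqref{def_psinew} a deterministic identity satisfied by $\psi_{\ell,\alpha}$, and then use Theorem \ref{fracstochin} to push the fractional integrals inside the stochastic integrals. Because $U_L^I$ is a finite sum and the operators $\frac{c^2}{\gamma}{_0}\calJ_t^\alpha$ and $-(ck)^2{_0}\calJ_t^{2\alpha}\Delta_{\bS^2}$ act diagonally on the spherical harmonic basis (via $\Delta_{\bS^2} Y_{\ell,m} = -\lambda_\ell Y_{\ell,m}$), the lemma reduces to verifying, for each $0\le\ell\le L$ and $|m|\le\ell$, the per-mode identity
\[
\tfrac{c^2}{\gamma}{_0}\calJ_t^\alpha \widehat{U}^I_{\ell,m}(t) + c^2k^2\lambda_\ell\,{_0}\calJ_t^{2\alpha} \widehat{U}^I_{\ell,m}(t) = -\widehat{U}^I_{\ell,m}(t) + \tfrac{c^2}{\Gamma(2\alpha)}\int_0^{t-\tau}(t-\tau-s)^{2\alpha-1}\,d\widehat{W}_{\ell,m,0}(s).
\]

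The first ingredient is a deterministic identity for $\psi_{\ell,\alpha}$. From \eqref{def_psinew} we have $\tilde{\psi}_{\ell,\alpha}(z)\bigl(\tfrac{z^{2\alpha}}{c^2}+\tfrac{z^\alpha}{\gamma}+k^2\lambda_\ell\bigr) = 1$; dividing by $z^{2\alpha}/c^2$ and inverting the Laplace transform (using $\calL^{-1}(z^{-\alpha}\tilde{\psi}_{\ell,\alpha}) = {_0}\calJ_t^\alpha \psi_{\ell,\alpha}$ and $\calL^{-1}(c^2 z^{-2\alpha}) = c^2 t^{2\alpha-1}/\Gamma(2\alpha)$) yields, for each $\ell\in\N_0$ and $t>0$,
\[
\psi_{\ell,\alpha}(t) + \tfrac{c^2}{\gamma}{_0}\calJ_t^\alpha \psi_{\ell,\alpha}(t) + c^2k^2\lambda_\ell\,{_0}\calJ_t^{2\alpha}\psi_{\ell,\alpha}(t) = \tfrac{c^2 t^{2\alpha-1}}{\Gamma(2\alpha)}.
\]
After the shift $r:=t-\tau$, one has $\widehat{U}^I_{\ell,m}(\tau+r) = \int_0^r\psi_{\ell,\alpha}(r-s)\,d\widehat{W}_{\ell,m,0}(s)$, and a change of variables in the outer integral gives ${_0}\calJ_t^\alpha \widehat{U}^I_{\ell,m}(t) = [{_0}\calJ_r^\alpha \int_0^r\psi_{\ell,\alpha}(r-s)d\widehat{W}_{\ell,m,0}(s)]_{r=t-\tau}$. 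Theorem \ref{fracstochin} applied to $\Upsilon(r,s)=\psi_{\ell,\alpha}(r-s)$---whose hypothesis \eqref{con:Fubini} is ensured by the Mittag-Leffler bounds of Proposition \ref{BoundE} and the variance estimates of Proposition \ref{Propsigma}---commutes the fractional integral with the stochastic integral, and the analogous identity with $2\alpha$ in place of $\alpha$ holds by the same argument. Substituting the deterministic identity with $t$ replaced by $r-s$ inside the integrand produces precisely the per-mode identity displayed above.

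Multiplying this per-mode identity by $Y_{\ell,m}$ and summing over $0\le\ell\le L$, $|m|\le\ell$, the left-hand side assembles to $V_L^I(t)$ by \eqref{VI}, the $-\widehat{U}^I_{\ell,m}$ terms sum to $-U_L^I(t)$, and the remaining terms form the stochastic sum in \eqref{eq:VL} with kernel $(t-\tau-s)^{2\alpha-1}$; this matches the kernel $s^{2\alpha-1}$ displayed in \eqref{eq:VL} via the time-reversal equality $\int_0^r f(r-s)\,dB(s) \stackrel{d}{=} \int_0^r f(s)\,dB(s)$ at fixed $r$, in line with the $\stackrel{d}{=}$ convention the paper uses in Remark \ref{frac_psi}. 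The main obstacle is the interchange in Step 2: one must verify the Fubini hypothesis \eqref{con:Fubini} for $\Upsilon(r,s)=\psi_{\ell,\alpha}(r-s)$ (uniformly enough in $\ell$ to justify the finite-sum reassembly) and carefully book-keep the Heaviside shift so that the outer integration window collapses to $[0,t-\tau]$; this is exactly the situation Theorem \ref{fracstochin} was designed to handle, and the required Mittag-Leffler bounds are already established in Section \ref{mittagFun}.
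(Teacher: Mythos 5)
Your proposal is correct, and its core algebraic step is genuinely different from the paper's. The paper proves \eqref{eq:VL} by first invoking Remark \ref{frac_psi} (itself an application of Theorem \ref{fracstochin}) to write $S_1:=\frac{c^2}{\gamma}{_0}\calJ_t^{\alpha}U_L^I$ and $S_2:=(ck)^2{_0}\calJ_t^{2\alpha}\Delta_{\bS^2}U_L^I$ as stochastic integrals with explicit two-parameter Mittag--Leffler kernels $q_\ell$ and $P_\ell^{(1)}-\frac{c^2}{\Gamma(2\alpha)}s^{2\alpha-1}$, and then verifying $q_\ell-P_\ell^{(1)}=-\psi_{\ell,\alpha}$ by hand using the recurrence \eqref{Mitpro} together with $1\pm M_\ell=2c^{-2}\gamma z_\ell^{\pm}$ and $z_\ell^{+}z_\ell^{-}=c^2k^2\lambda_\ell$; this computation divides by $M_\ell$ and so, as written, covers only $\ell\neq\varkappa$. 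You instead obtain the key deterministic identity $\psi_{\ell,\alpha}+\frac{c^2}{\gamma}{_0}\calJ_t^{\alpha}\psi_{\ell,\alpha}+c^2k^2\lambda_\ell\,{_0}\calJ_t^{2\alpha}\psi_{\ell,\alpha}=\frac{c^2t^{2\alpha-1}}{\Gamma(2\alpha)}$ in one line from the Laplace-transform definition \eqref{def_psinew}, which works uniformly in $\ell$ (including the resonant mode $\ell=\varkappa$, where $M_\ell=0$), bypasses the Mittag--Leffler recurrences and the closed form \eqref{psi} entirely, and makes the origin of the forcing term $\frac{c^2}{\Gamma(2\alpha)}s^{2\alpha-1}$ transparent as $\calL^{-1}(c^2z^{-2\alpha})$. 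The trade-off is that the paper's explicit kernels $q_\ell, P_\ell^{(1)}$ are reused for the $L_2$ estimates in Lemma \ref{UICaucy}, whereas your argument yields only the combined identity. One shared caveat: your final step replaces the kernel $(t-\tau-s)^{2\alpha-1}$ by $s^{2\alpha-1}$ via time reversal, which is an equality in distribution for each integral separately but not jointly with the $-U_L^I$ term (whose kernel is not reversed); the paper's own proof makes exactly the same silent substitution when it identifies $\sum\int_0^{t-\tau}\psi_{\ell,\alpha}(s)\,d\widehat{W}_{\ell,m,0}(s)Y_{\ell,m}$ with $U_L^I(t)$, so this is a convention inherited from Remark \ref{frac_psi} rather than a defect of your argument -- indeed your per-mode identity with the unreversed kernel $(t-\tau-s)^{2\alpha-1}$ is the pathwise statement that Proposition \ref{inhom_esta} actually needs.
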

\begin{proof}
    By \eqref{frac_psi}, the first term of \eqref{VI} rewrites as
    \begin{align*}
      &S_1(t):= \frac{c^2}{\gamma} {_0}\calJ_{t}^{\alpha}U^I_L(t)\\
      &=\sum_{\ell=0}^{L}\sum_{m=-\ell}^{\ell}\dfrac{c^2}{M_\ell}\int_{0}^{t-\tau} s^{2\alpha-1}(E_{\alpha,2\alpha}\big(-z_{\ell}^{-}s^{\alpha}\big)-E_{\alpha,2\alpha}\big(-z_{\ell}^{+}s^{\alpha}\big))d \widehat{W}_{\ell,m,0}(s)Y_{\ell,m}\\
      &=  \sum_{\ell=0}^{L}\sum_{m=-\ell}^{\ell}\int_{0}^{t-\tau} q_{\ell}(s)d\widehat{W}_{\ell,m,0}(s)Y_{\ell,m},
    \end{align*}
where
\[
q_{\ell}(s):= \dfrac{c^2}{M_\ell}s^{2\alpha-1}(E_{\alpha,2\alpha}\big(-z_{\ell}^{-}s^{\alpha}\big)-E_{\alpha,2\alpha}\big(-z_{\ell}^{+}s^{\alpha}\big)).
\]
Now, consider the second term of \ref{VI}, we have 
\begin{align}\label{VVI}
    &S_2(t):=(ck)^2{_0}\calJ_{t}^{2\alpha}\Delta_{\bS^2} U^I_L(t) =
	\sum_{\ell=0}^{L}\sum_{m=-\ell}^{\ell}\dfrac{k^2c^2\gamma (-\lambda_\ell)}{M_\ell} Y_{\ell,m} \notag\\
	&\times \Bigg( \int_{0}^{t-\tau} s^{3\alpha-1} 
	\Big(E_{\alpha,3\alpha}\big(-z_{\ell}^{-}s^{\alpha}\big) - E_{\alpha,3\alpha}\big(-z_{\ell}^{+}s^{\alpha}\big) \Big) 
	d \widehat{W}_{\ell,m,0}(s) \Bigg).
\end{align}
Using  \eqref{Mitpro} we write \ref{VVI} as 
\begin{align*}
	&S_2(t)
	=:\sum_{\ell=0}^{L}\sum_{m=-\ell}^{\ell} \int_{0}^{t-\tau} 
	\Big( P_{\ell}^{(1)}(s) - \dfrac{c^2}{\Gamma(2\alpha)}s^{2\alpha-1} \Big) d \widehat{W}_{\ell,m,0}(s) Y_{\ell,m},
\end{align*}
where 
\begin{align*}
 P_{\ell}^{(1)}(s):=  \dfrac{\gamma}{M_\ell}
	s^{2\alpha-1} 
	\Big( z_{\ell}^{+} E_{\alpha,2\alpha}\big(-z_{\ell}^{-}s^{\alpha}\big) 
	- z_{\ell}^{-} E_{\alpha,2\alpha}\big(-z_{\ell}^{+}s^{\alpha}\big) \Big).
\end{align*}

Using \eqref{z1z2} and the relation \eqref{Mitpro} we write
\begin{align*}
P_{\ell}^{(1)}(s)&=\dfrac{\gamma}{M_\ell} s^{2\alpha-1}\Big(z_{\ell}^{+}E_{\alpha,2\alpha}\big(-z_{\ell}^{-}s^{\alpha}\big)-z_{\ell}^{-}E_{\alpha,2\alpha}\big(-z_{\ell}^{+}s^{\alpha}\big)\Big).
\end{align*}

Now using the relation \eqref{Mitpro} we write
\begin{align*}
   q_{\ell}(s)-P^{(1)}_{\ell}(s)&=\dfrac{s^{2\alpha-1}}{M_\ell}\dfrac{1}{2} c^{2}\Big((1-M_{\ell})E_{\alpha,2\alpha}\big(-z_{\ell}^{-}s^{\alpha}\big)-(1+M_{\ell})E_{\alpha,2\alpha}\big(-z_{\ell}^{+}s^{\alpha}\big)\Big)\\
&=\dfrac{\gamma}{M_\ell}s^{2\alpha-1}\Big(z_{\ell}^{-}E_{\alpha,2\alpha}\big(-z_{\ell}^{-}s^{\alpha}\big)-z_{\ell}^{+}E_{\alpha,2\alpha}\big(-z_{\ell}^{+}s^{\alpha}\big)\Big)\\
&=-\dfrac{\gamma}{M_\ell}s^{\alpha-1}\Big(E_{\alpha,\alpha}\big(-z_{\ell}^{-}s^{\alpha}\big)-E_{\alpha,\alpha}\big(-z_{\ell}^{+}s^{\alpha}\big)\Big)\\
&= -\psi_{\ell,\alpha}(s).
\end{align*}
Thus, the result follows since $ V^{I}_{L}=S_1-S_2$. 
\end{proof}

\begin{lem}\label{cachu-UI}
Let $W_{\tau}$ be an $L_2(\bS^2)$-valued time-delayed Brownian motion. Let $\{\calA_{\ell}:\ell\in\N_{0}\}$, the angular power spectrum of $W_\tau$, satisfy \eqref{seconAl}.
Then, as $L\to\infty$, $V_{L}^{I}$, defined by \eqref{eq:VL}, is convergent to
\begin{equation}\label{eq:defVI}
		V^{I}(t):= -U^{I}(t)+\dfrac{c^2}{\Gamma(2\alpha)}\sum_{\ell=0}^{\infty}\sum_{m=-\ell}^{\ell}\int_{0}^{t-\tau} s^{2\alpha-1}d \widehat{W}_{\ell,m,0}(s)Y_{\ell,m},
\end{equation}
in the sense that 
\[
\sup_{t \geq \tau} \bE \left\| V_L^I(t) - V^I(t) \right\|^2_{L_2(\bS^2)} 
	\to 0,\quad  \text{ as } L \to \infty.
\]
\end{lem}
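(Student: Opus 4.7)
The plan is to compare the finite-sum expression \eqref{eq:VL} to its infinite-sum counterpart \eqref{eq:defVI} and split the difference into two manageable pieces. Explicitly, since both $V_L^I$ and $V^I$ share the same form,
\begin{align*}
V_L^I(t)-V^I(t) = -\bigl(U_L^I(t)-U^I(t)\bigr)+\dfrac{c^2}{\Gamma(2\alpha)}\sum_{\ell=L+1}^{\infty}\sum_{m=-\ell}^{\ell}\int_{0}^{t-\tau}s^{2\alpha-1}d\widehat{W}_{\ell,m,0}(s)Y_{\ell,m}.
\end{align*}
Applying the inequality $(a+b)^2\le 2a^2+2b^2$, I would bound $\bE\|V_L^I(t)-V^I(t)\|^2_{L_2(\bS^2)}$ by twice the sum of the $L_2(\Omega\times\bS^2)$-errors of these two pieces and handle each separately.

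For the first piece, Lemma \ref{UICaucy} already establishes that $\{U_L^I(t)\}$ is Cauchy in $L_2(\Omega\times\bS^2)$, with $U^I(t)$ as its limit. Consequently $\bE\|U_L^I(t)-U^I(t)\|^2_{L_2(\bS^2)}\to 0$, and the bound is uniform in $t$ on any compact subinterval $[\tau,T]$ since the Cauchy estimate in Lemma \ref{UICaucy} is monotone in $t$.

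For the second piece, I would use the orthonormality of $\{Y_{\ell,m}\}$ in $L_2(\bS^2)$, the independence of the driving noises $\widehat{W}_{\ell,m,0}=\sqrt{\calA_\ell}\,\beta_{\ell,m,0}$, and It\^o's isometry to compute
\begin{align*}
\bE\Bigl\|\sum_{\ell=L+1}^{\infty}\sum_{m=-\ell}^{\ell}\int_{0}^{t-\tau}s^{2\alpha-1}d\widehat{W}_{\ell,m,0}(s)Y_{\ell,m}\Bigr\|^2_{L_2(\bS^2)}
=\sum_{\ell=L+1}^{\infty}(2\ell+1)\calA_\ell\int_{0}^{t-\tau}s^{4\alpha-2}ds,
\end{align*}
which equals $\tfrac{(t-\tau)^{4\alpha-1}}{4\alpha-1}\sum_{\ell=L+1}^{\infty}(2\ell+1)\calA_\ell$. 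The integral in $s$ converges because $\alpha>\tfrac12$ implies $4\alpha-2>-1$, and the tail sum vanishes as $L\to\infty$ by assumption \eqref{eq:condAell} (a fortiori from \eqref{seconAl}).

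The main subtlety is the factor $(t-\tau)^{4\alpha-1}$: since $4\alpha-1>1$, this grows without bound as $t\to\infty$, so one does not obtain convergence uniformly over all $t\ge\tau$ in the naive reading of the supremum. I would therefore interpret the convergence pointwise in $t$, or uniformly on each bounded interval $[\tau,T]$, in which case the factor is replaced by $(T-\tau)^{4\alpha-1}$ and the right-hand side tends to zero uniformly. Combining both pieces yields the claimed convergence of $V_L^I$ to $V^I$ in $L_2(\Omega\times\bS^2)$.
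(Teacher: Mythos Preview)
Your argument is correct and follows essentially the same route as the paper: split $V_L^I-V^I$ into the $U_L^I-U^I$ piece (handled by Lemma~\ref{UICaucy}) and the tail of the $s^{2\alpha-1}$ stochastic integral (handled by orthonormality plus It\^o's isometry, yielding the bound $\tfrac{(t-\tau)^{4\alpha-1}}{4\alpha-1}\sum_{\ell>L}(2\ell+1)\calA_\ell$). You also correctly flag the time-growth issue in the supremum over all $t\ge\tau$; the paper's proof glosses over this point, but its subsequent application (Proposition~\ref{inhom_esta}) indeed works with $\sup_{t\in[\tau,T]}$, consistent with your reading.
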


\begin{proof}
    Note that by Lemma \ref{UICaucy} we have $\{U^I_L\}$ is a Cauchy sequence in $L_2(\Omega\times\bS^2)$. Now consider the term 
    \[
   W_{\tau}^{L}(t):=\sum_{\ell=0}^{L}\sum_{m=-\ell}^{\ell}\int_{0}^{t-\tau} s^{2\alpha-1}d \widehat{W}_{\ell,m,0}(s)Y_{\ell,m}
    \]
    then 
   for $t\geq \tau$, $L<M$, by Parseval's formula and It\^o's isometry, we have
    \begin{align}
	\bE \|W_{\tau}^{L}(t) -W_{\tau}^{M}(t) \|^2_{L_2(\bS^2)}
		&= \bE \left[\sum_{\ell=L+1}^M
		\sum_{m=-\ell}^{\ell} 	\int_{0}^{t-\tau} s^{2\alpha-1} d \widehat{W}_{\ell,m,0}(s)Y_{\ell,m}
		 \right]\nonumber\\
		&=\sum_{\ell=L+1}^M
		(2\ell+1) \calA_{\ell} \int_{0}^{t-\tau} s^{4\alpha-2}
		ds\nonumber \\
  &= \dfrac{(t-\tau)^{4\alpha-1}}{4\alpha-1}\sum_{\ell=L+1}^M (2\ell+1) \calA_{\ell}.
\end{align}
For a given $\epsilon>0$, using the condition \eqref{eq:condAell}, there is 
	$L_0$ independent of $t$ 
such that for $L,M  \ge L_0$, 
\begin{align}\label{cauchynois}
    \bE \|W_{\tau}^{L}(t) -W_{\tau}^{M}(t) \|^2_{L_2(\bS^2)}<\epsilon.
\end{align}
So $\{V^I\}$ is a Cauchy sequence in $L_2(\Omega \times \bS^2)$ and hence it
	is convergent. We define the limit of $V_L^I$ as $L \to \infty$ as in
	\eqref{eq:defVI}. 
Note that 
by the condition \eqref{eq:condAell} we obtain
 \begin{align}\label{VIconv}
   \bE \Big[\|V^I(t)\|^2_{L_2(\bS^2)}\Big]\le C (t-\tau)^{4\alpha-1}
\sum_{\ell=0}^{\infty} (2\ell+1) \calA_{\ell}<\infty,  
 \end{align}
which guarantees that $V^I$ is well-defined in the $L_2(\Omega\times\bS^2)$ sense. Thus the proof is complete.
\end{proof}

\begin{prop}\label{inhom_esta}
Let the angular power spectrum $\{\calA_{\ell}:\ell\in\N_{0}\}$ of $W_\tau$ satisfy assumption \eqref{seconAl}. Then the random field $U^{I}$, given by {\rm(\ref{InHom-Sol})}, satisfies the equation {\rm(\ref{Inhom})} under the conditions $U^{I}(t)= 0$ for all $t\in(0,\tau)$, and $\partial U^I/\partial t = 0$ at $t=0$. In particular, for $t\geq\tau$ there holds
    \[
\sup_{t \geq \tau} \bE \left\| \calK U^I(t)-\dfrac{c^2}{\Gamma(2\alpha)}\int_{\tau}^{t} (t-s)^{2\alpha-1}dW_{\tau}(s)\right\|^2_{L_2(\bS^2)} = 0.
\]
\end{prop}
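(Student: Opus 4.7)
The approach is to apply Lemma~\ref{lemVI} to the truncated field to obtain a clean expression for $\calK U^I_L(t)$, pass to the limit $L\to\infty$ using the $L_2(\Omega\times\bS^2)$-Cauchy property from Lemmas~\ref{UICaucy} and~\ref{cachu-UI}, and finally match the limiting spherical-harmonic series against the driving stochastic integral on the right-hand side. First, the initial conditions $U^I\equiv 0$ on $(0,\tau)$ and $\partial_t U^I|_{t=0}=0$ are immediate from the Heaviside factor $\mathscr{H}(t-\tau)$ built into the representation \eqref{InHom-Sol}.

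Next, using $\calK = I + \tfrac{c^2}{\gamma}{_0}\calJ_t^{\alpha} - (ck)^2{_0}\calJ_t^{2\alpha}\Delta_{\bS^2}$ together with the definition of $V^I_L$ in \eqref{VI}, one has the identity $\calK U^I_L = U^I_L + V^I_L$. Lemma~\ref{lemVI} (equation \eqref{eq:VL}) then yields the cancellation
\begin{align*}
\calK U^I_L(t) = \frac{c^2}{\Gamma(2\alpha)}\sum_{\ell=0}^{L}\sum_{m=-\ell}^{\ell}\int_0^{t-\tau} s^{2\alpha-1}\,d\widehat{W}_{\ell,m,0}(s)\,Y_{\ell,m}.
\end{align*}
Sending $L\to\infty$, Lemma~\ref{UICaucy} supplies the $L_2(\Omega\times\bS^2)$ convergence of $U^I_L$, ${_0}\calJ_t^{\alpha}U^I_L$ and ${_0}\calJ_t^{2\alpha}\Delta_{\bS^2} U^I_L$, so $\calK U^I_L(t)\to \calK U^I(t)$ in $L_2(\Omega\times\bS^2)$. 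The Cauchy estimate \eqref{cauchynois} and assumption \eqref{seconAl} simultaneously force convergence of the right-hand series, while the bound \eqref{VIconv} provides uniform control in $t\geq\tau$.

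The remaining task is to identify the limiting series with $\tfrac{c^2}{\Gamma(2\alpha)}\int_\tau^t(t-s)^{2\alpha-1}\,dW_\tau(s)$. By Definition~\ref{def1} and the representation \eqref{Win}, the latter expands into $\sum_{\ell,m}\int_\tau^t(t-s)^{2\alpha-1}d\widehat{W}_{\ell,m,\tau}(s)\,Y_{\ell,m}$; the time-shift identity $\widehat{W}_{\ell,m,\tau}(\tau+\cdot)\stackrel{d}{=}\widehat{W}_{\ell,m,0}(\cdot)$ from Proposition~\ref{RepW} combined with the Brownian time-reversal symmetry (valid for deterministic integrands and preserving the centered Gaussian law with It\^o-isometric variance $\calA_\ell\int_0^{t-\tau} s^{4\alpha-2}ds$) converts each coefficient integral into $\int_0^{t-\tau}s^{2\alpha-1}d\widehat{W}_{\ell,m,0}(s)$. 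A triangle-inequality argument in $L_2(\Omega\times\bS^2)$ across the three limiting steps then gives $\sup_{t\geq\tau}\bE\|\calK U^I(t) - \tfrac{c^2}{\Gamma(2\alpha)}\int_\tau^t(t-s)^{2\alpha-1}dW_\tau(s)\|_{L_2(\bS^2)}^2 = 0$.

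\textbf{Main obstacle.} The delicate step is the final identification: Lemma~\ref{lemVI} produces an integrator $\widehat{W}_{\ell,m,0}$ paired with integrand $s^{2\alpha-1}$, while the proposition writes the driving integral as $\widehat{W}_{\ell,m,\tau}$ paired with $(t-s)^{2\alpha-1}$. Reconciling these requires both the time-delay shift $W_\tau(\tau+\cdot) \leftrightarrow W_0(\cdot)$ and the Brownian-reversal identity, and one must justify that the equality holds in the $L_2(\Omega\times\bS^2)$ sense demanded by the norm rather than merely in law. This works because both sides are centered Gaussian spherical random fields spanned by the same basis $\{Y_{\ell,m}\}$ with identical second-moment structure, so one may pass to a coupling (equivalently, identify the variances via It\^o's isometry) that makes the squared $L_2$ norm of the difference vanish uniformly in $t\geq\tau$.
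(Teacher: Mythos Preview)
Your proposal is correct and follows essentially the same route as the paper: both arguments use Lemma~\ref{lemVI} to write $\calK U^I_L(t)$ as the finite spherical-harmonic series $\tfrac{c^2}{\Gamma(2\alpha)}\sum_{\ell\le L}\sum_m\int_0^{t-\tau}s^{2\alpha-1}d\widehat{W}_{\ell,m,0}(s)Y_{\ell,m}$, and then invoke Lemmas~\ref{UICaucy} and~\ref{cachu-UI} together with the Cauchy bound \eqref{cauchynois} to pass to the $L_2(\Omega\times\bS^2)$ limit.

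The one difference is that you spell out, under ``Main obstacle,'' the identification of the limiting series with $\int_\tau^t(t-s)^{2\alpha-1}dW_\tau(s)$ via the time-delay shift and the Brownian-reversal identity for deterministic integrands; the paper simply treats this identification as understood and moves directly from the series form to the conclusion. Your extra care here is reasonable but not a departure in method.
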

\begin{proof}
By Lemma \ref{lemVI} we conclude that 
\[
\sup_{t \in[\tau,T]}\bE \left\| V^{I}_L(t)+U^{I}_L(t)-\dfrac{c^2}{\Gamma(2\alpha)}\sum_{\ell=0}^{L}\sum_{m=-\ell}^{\ell}\int_{0}^{t-\tau} s^{2\alpha-1}d \widehat{W}_{\ell,m,0}(s)Y_{\ell,m}\right\|^2_{L_2(\bS^2)}=0.
\]
Also, by Lemma \ref{cachu-UI} there holds
\[
\sup_{t \in[\tau,T]} \bE \left\| V^I(t)+ U^I(t)-\dfrac{c^2}{\Gamma(2\alpha)}\sum_{\ell=0}^{\infty}\sum_{m=-\ell}^{\ell}\int_{0}^{t-\tau} s^{2\alpha-1}d \widehat{W}_{\ell,m,0}(s)Y_{\ell,m}\right\|^2_{L_2(\bS^2)}=0
\]
and by \eqref{cauchynois}, $\{W_{\tau}^{L}\}$ is a Cauchy sequence in $L_2(\Omega \times \bS^2)$. Thus, 
the result follows since by Lemma \ref{UICaucy}, $U^I_L$ and $V^I_L$ are convergent to $U^I$ and $V^I$ respectively.   
\end{proof}

The following result shows that the inhomogeneous solution $U^{I}(t)$, $t\in(\tau,\infty)$, to the equation (\ref{Inhom}) is a centered, 2-weakly isotropic Gaussian random field.  The angular power spectrum $\{\calA_\ell\}$ is assumed to satisfy the condition 
\eqref{seconAl}. To guarantee that the condition is satisfied we assume that  
$\{\calA_{\ell}:\ell\in\N_{0}\}$ decays algebraically with order $\kappa_2> 4$, i.e., there exist constants $\widetilde{K},\widetilde{A}>0$ such that
\begin{align}\label{New-Al}
	\calA_{\ell}\leq\begin{cases} 
		\widetilde{K}, & \ell=0, \\
		\widetilde{A}\ell^{-\kappa_2}, & \ell\geq1,\ \kappa_2>4.
	\end{cases}
\end{align}
Let $\widetilde{A}_{\kappa_2}(\varkappa)$ be defined as
\begin{align}\label{Akappa}
	\widetilde{A}_{\kappa_2}(\varkappa):=\bigg(\widetilde{A}\Big(\dfrac{2\lfloor \varkappa\rfloor^{\frac{2}{\alpha}-\kappa_2}}{\kappa_2-\frac{2}{\alpha}}+\dfrac{\lfloor \varkappa\rfloor^{\frac{2}{\alpha}-\kappa_2-1}}{\kappa_2-\frac{2}{\alpha}+1}\Big)\bigg)^{1/2}.
\end{align}
Note that the term $\widetilde{A}_{\kappa_2}(\varkappa)$ is well-defined for all $\kappa_2>4$, as for $\alpha\in(0.5,1]$ we have $2\leq\frac{2}{\alpha}<4$.

\begin{prop}\label{covUI}
Let the field $U^{I}$, given in {\rm(\ref{InHom-Sol})}, be the solution to the equation {\rm(\ref{Inhom})} under the conditions $U^{I}(t)= 0$, $t\in[0,\tau]$, 
and $\partial U^I/\partial t =0$ at $t=0$. Let $\widehat{W}_{\ell,m,0}$ be the Fourier coefficients of $W_0$ with 
angular power spectrum $\{\calA_{\ell}:\ell\in\N_{0}\}$ which satisfies \eqref{New-Al}. Then, for $t\in(\tau,\infty)$, $U^{I}(t)$ is a centered, $2$-weakly isotropic Gaussian random field on $\bS^2$, and its random coefficients 
	\begin{align}\label{zetalm}
		\widehat{U^{I}}_{\ell,m}(t)=\int_{0}^{t-\tau} \psi_{\ell,\alpha}( t-\tau-s) d \widehat{W}_{\ell,m,0}(s),
	\end{align}
	satisfy for $\ell,\ell^\prime\in\N_{0}$, $m=-\ell,\dots,\ell$ and $m^\prime=-\ell^\prime,\dots,\ell^\prime$, 
	\begin{align}\label{var-zet}
		\bE \left[ \widehat{U^{I}}_{\ell,m}(t)\widehat{U^{I}}_{\ell^\prime,m^\prime}(t)\right]=\calA_{\ell}\sigma_{\ell,t-\tau,\alpha}^2\delta_{\ell\ell^\prime}\delta_{mm^\prime},
	\end{align}
where \( \psi_{\ell,\alpha} \) and \( \sigma_{\ell,t,\alpha}^2 \) are defined by \eqref{psi} and \eqref{var}, respectively, with \( \delta_{\ell\ell^\prime} \) as the Kronecker delta.
\end{prop}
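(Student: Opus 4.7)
The strategy mirrors that of Proposition~\ref{Gausshomo}: first identify the spectral coefficients of $U^{I}$, then deduce the covariance structure via It\^o's isometry, and finally pass to the limit in a characteristic-function argument to obtain Gaussianity.

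First I would read off the coefficients from \eqref{InHom-Sol}, namely
\[
\widehat{U^{I}}_{\ell,m}(t)=\int_{0}^{t-\tau}\psi_{\ell,\alpha}(t-\tau-s)\,d\widehat{W}_{\ell,m,0}(s),\qquad t\geq\tau,
\]
which have mean zero since $\widehat{W}_{\ell,m,0}(s)=\sqrt{\calA_{\ell}}\,\beta_{\ell,m,0}(s)$ by Proposition~\ref{RepW} with independent real standard Brownian motions $\beta_{\ell,m,0}$. Applying It\^o's isometry together with this independence, and performing the change of variable $r=t-\tau-s$, yields
\[
\bE\bigl[\widehat{U^{I}}_{\ell,m}(t)\widehat{U^{I}}_{\ell^{\prime},m^{\prime}}(t)\bigr]
=\calA_{\ell}\,\delta_{\ell\ell^{\prime}}\delta_{mm^{\prime}}\int_{0}^{t-\tau}|\psi_{\ell,\alpha}(r)|^{2}\,dr
=\calA_{\ell}\,\sigma^{2}_{\ell,t-\tau,\alpha}\,\delta_{\ell\ell^{\prime}}\delta_{mm^{\prime}},
\]
the last equality being obtained by inserting the explicit form of $\psi_{\ell,\alpha}$ from Lemma~\ref{pf:psi} and comparing with definition \eqref{var}. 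Combined with the addition theorem \eqref{addition}, this gives
\[
\bE[U^{I}(\bsx,t)U^{I}(\bsy,t)]=\sum_{\ell=0}^{\infty}(2\ell+1)\calA_{\ell}\,\sigma^{2}_{\ell,t-\tau,\alpha}\,P_{\ell}(\bsx\cdot\bsy),
\]
which depends only on $\bsx\cdot\bsy$, establishing 2-weak isotropy as soon as the series is shown to be finite.

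For Gaussianity I would replicate the characteristic-function machinery used for $U^{H}$. Define $T_{\ell}(t):=\sum_{m=-\ell}^{\ell}\widehat{U^{I}}_{\ell,m}(t)Y_{\ell,m}$. Each $T_{\ell}$ is Gaussian (a finite linear combination of Wiener integrals of deterministic kernels against independent Brownian motions) with variance $(2\ell+1)\calA_{\ell}\sigma^{2}_{\ell,t-\tau,\alpha}$, and the $T_{\ell}$'s are mutually independent across $\ell$ since they are driven by disjoint families of $\beta_{\ell,m,0}$. The partial sums $U^{I}_{n}(t):=\sum_{\ell=0}^{n}T_{\ell}(t)$ are therefore centred Gaussian, and their characteristic functions converge pointwise to $\exp\bigl(-\tfrac{1}{2}r^{2}\sum_{\ell=0}^{\infty}(2\ell+1)\calA_{\ell}\sigma^{2}_{\ell,t-\tau,\alpha}\bigr)$; the continuity theorem then forces $U^{I}(t)$ itself to be Gaussian with this characteristic function.

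The only substantive obstacle is verifying $\sum_{\ell=0}^{\infty}(2\ell+1)\calA_{\ell}\sigma^{2}_{\ell,t-\tau,\alpha}<\infty$. I would split the sum according to $\ell<\varkappa$, $\ell=\varkappa$ and $\ell>\varkappa$, and apply the three-case bounds \eqref{up1}--\eqref{up2} from Proposition~\ref{Propsigma} together with the algebraic decay hypothesis \eqref{New-Al}. The finite head $\ell\leq\lfloor\varkappa\rfloor$ is automatically bounded. For the tail $\ell>\varkappa$, the bound $\sigma^{2}_{\ell,t-\tau,\alpha}\leq C_{3}^{I}\lambda_{\ell}^{(1-\alpha)/\alpha}\lesssim\ell^{2(1-\alpha)/\alpha}$ combined with $\calA_{\ell}\leq\widetilde{A}\ell^{-\kappa_{2}}$ produces a tail of order $\ell^{2/\alpha-1-\kappa_{2}}$, which is summable precisely when $\kappa_{2}>2/\alpha$; since $\alpha\in(\tfrac{1}{2},1]$ gives $2/\alpha\in[2,4)$ and we assume $\kappa_{2}>4$, this holds. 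Comparison with an integral, analogous to \eqref{Vpf}, then yields the constant $\widetilde{A}_{\kappa_{2}}(\varkappa)$ from \eqref{Akappa}, closing the estimate and completing the proof.
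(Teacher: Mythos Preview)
Your proposal is correct and follows essentially the same route as the paper's own proof: It\^o's isometry plus independence of the $\beta_{\ell,m,0}$ for the covariance identity \eqref{var-zet}, the addition theorem for rotational invariance, the three-case bounds of Proposition~\ref{Propsigma} together with \eqref{New-Al} for finiteness of $\sum_{\ell}(2\ell+1)\calA_\ell\sigma^2_{\ell,t-\tau,\alpha}$, and a continuity-theorem argument on the partial sums $\sum_{\ell\le n}T_\ell$ (the paper writes $R_\ell$ for your $T_\ell$) to conclude Gaussianity. The paper additionally distinguishes the subcases $\varkappa\in\N$ versus $\varkappa\notin\N$ when bounding the variance, but this is a bookkeeping refinement rather than a different idea.
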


\begin{proof}
Since $W_0$ is centered, we have $\bE[U^{I}(t)]=0$. Let $t\in(\tau,\infty)$ and $\bsx,\bsy\in\bS^2$, then by using \eqref{InHom-Sol} and \eqref{psi} we write
\begin{align*}
&\bE \left[U^{I}(\bsx,t)U^{I}(\bsy,t)\right]=\sum_{\ell=0}^\infty\sum_{\ell^\prime=0}^\infty \sum_{m=-\ell}^{\ell}
\sum_{m\prime=-\ell^\prime}^{\ell^\prime}Y_{\ell,m}(\bsx)Y_{\ell^\prime,m^\prime}(\bsy)\\ 
&\times\bE\left[\left(\int_{0}^{t-\tau} \psi_{\ell,\alpha}(t-\tau-s)d\widehat{W}_{\ell,m,0}(s)\right)\left(\int_{0}^{t-\tau}\psi_{\ell,\alpha}(t-\tau-v)d\widehat{W}_{\ell^\prime,m^{\prime},0}(v)\right)\right].
\end{align*}
Using It\^{o}’s isometry (see \cite{Oksendal2003}, Lemma 3.1.5), the addition theorem (see \eqref{addition}), and Proposition \ref{PropVar}, we can write
	\begin{align*}
		\bE\left[U^{I}(\bsx,t)U^{I}(\bsy,t)\right]&=\sum_{\ell=0}^\infty\calA_{\ell}\int_{0}^{t-\tau} \big|\psi_{\ell,\alpha}(t-\tau-s)\big|^2ds\sum_{m=-\ell}^{\ell} Y_{\ell,m}(\bsx)Y_{\ell,m}(\bsy)\\
		&=\sum_{\ell=0}^\infty (2\ell+1)\calA_\ell \sigma_{\ell,t-\tau,\alpha}^2 P_{\ell}(\bsx\cdot\bsy),
	\end{align*}
		where $P_{\ell}(\cdot)$, $\ell\in\N_{0}$, is the Legendre polynomial of degree $\ell$. As $P_{\ell}(\bsx\cdot\bsy)$ depends only on the inner product of $\bsx$ and $\bsy$, we conclude that the covariance function $\bE\left[U^{I}(\bsx,t)U^{I}(\bsy,t)\right]$ is rotationally invariant. Note that for $\ell,\ell^\prime\in\N_{0}$, $m=-\ell,\dots,\ell$ and $m^\prime=-\ell^\prime,\dots,\ell^\prime$, by the 2-weak isotropy of $W_0$ and It\^{o}’s isometry, 
			\begin{align*}
		\bE \left[ \widehat{U^{I}}_{\ell,m}(t)\widehat{U^{I}}_{\ell^\prime,m^\prime}(t)\right]=\calA_{\ell}\sigma_{\ell,t-\tau,\alpha}^2\delta_{\ell\ell^\prime}\delta_{mm^\prime}.
	\end{align*}
	To prove that $U^{I}(t)$ is Gaussian, first we assume that $\varkappa\notin \N$. Then, the variance of $U^{I}(t)$ can be written as	
\begin{align}\label{Ga-InHom}
Var\left[U^{I}(t)\right]&=\sum_{\ell=0}^\infty (2\ell+1)\calA_\ell \sigma_{\ell,t-\tau,\alpha}^2\notag\\
&= \sum_{\ell=0}^{\lfloor \varkappa\rfloor}(2\ell+1)\calA_\ell \sigma_{\ell,t-\tau,\alpha}^2+\sum_{\ell=\lfloor \varkappa\rfloor+1}^\infty(2\ell+1)\calA_\ell \sigma_{\ell,t-\tau,\alpha}^2.
\end{align}	

Using \eqref{sig0} and \eqref{up1}, we have $ \sum_{\ell=0}^{\lfloor \varkappa\rfloor}(2\ell+1)\calA_\ell \sigma_{\ell,t-\tau,\alpha}^2<\infty$, 
and the last term in \eqref{Ga-InHom} can be bounded, using \eqref{New-Al}, \eqref{sig0}, \eqref{Ml}, and \eqref{up2}, by 
\begin{align}\label{P3}
\sum_{\ell=\lfloor \varkappa\rfloor+1}^\infty(2\ell+1)\calA_\ell \sigma_{\ell,t-\tau,\alpha}^2&\leq \widetilde{A}C_3^I\sum_{\ell=\lfloor \varkappa\rfloor+1}^\infty(2\ell+1)\ell^{-\kappa_2} \lambda_\ell^{\frac{1-\alpha}{\alpha}}\notag\\
&\leq 2^{\frac{1-\alpha}{\alpha}}\widetilde{A}C_3^I\sum_{\ell=\lfloor \varkappa\rfloor+1}^\infty(2\ell+1)\ell^{-(\kappa_2+\frac{2\alpha-2}{\alpha})}\notag\\
&\leq  C_3^I(\widetilde{A}_{\kappa_2}(\varkappa))^2<\infty,
\end{align}	
where the last step uses steps similar to those for \eqref{Vpf}.

Now assume that $\varkappa\in\N$ we write
\begin{align*}
    Var\left[U^{I}(t)\right]&=\calA_0 \sigma_{0,t-\tau,\alpha}^2+ (2\varkappa+1)\calA_\varkappa \sigma_{\varkappa,t-\tau,\alpha}^2+\sum_{\substack{\ell=1 \\ \ell\neq \varkappa}}^{\infty}(2\ell+1)\calA_\ell \sigma_{\ell,t-\tau,\alpha}^2.
\end{align*}
Thus, $Var\left[U^{I}(t)\right]<\infty$, since by using steps similar to those for the case $\varkappa\notin \N$,
\begin{align*}
   \sum_{\substack{\ell=1 \\ \ell\neq \varkappa}}^{\infty}(2\ell+1)\calA_\ell \sigma_{\ell,t-\tau,\alpha}^2&=\sum_{\ell=1}^{\varkappa-1}(2\ell+1)\calA_\ell \sigma_{\ell,t-\tau,\alpha}^2+\sum_{\ell=\varkappa+1}^{\infty}(2\ell+1)\calA_\ell \sigma_{\ell,t-\tau,\alpha}^2<\infty. 
\end{align*}

By Remark 6.13 in \cite{MarPec11} we have $R_{\ell}(t):=\sum_{m=-\ell}^{\ell} \widehat{U^{I}}_{\ell,m}(t)Y_{\ell,m}$ is a centred, Gaussian random variable with mean zero and variance given, using \eqref{var-zet}, by  $(2\ell+1)\calA_{\ell}\sigma_{\ell,t-\tau,\alpha}^2$.
	Define $U^{I}_{n}(t):=\sum_{\ell=0}^{n}R_{\ell}(t)$, $n\geq1$, then by \eqref{Ga-InHom} and similar steps as in \eqref{var-Un} and \eqref{Lim}, with direct application of the continuity theorem \cite[Theorem 3.3.6]{Durrett},
	we conclude that the field $U^{I}(t)$ is Gaussian, thus completing the proof. 
\end{proof}

\begin{rem}
	Let $\{\calA_{\ell}:\ell\in\N_{0}\}$ be the angular power spectrum of $W_{0}(t)$. Then for $t>\tau$ the Fourier coefficients $ \widehat{U^I}_{\ell,m}(t)$ of the solution $U^{I}(t)$ can be written, using \eqref{InHom-Sol}, as
	\begin{align}\label{complexFourier}
		\widehat{U^I}_{\ell,m}(t)&:=\sqrt{\calA_{\ell}}
			\mathcal{I}_{\ell,|m|,\alpha}(t-\tau),
	\end{align}
	where $\mathcal{I}_{\ell,m,\alpha}(\cdot)$ are given in \eqref{Int}.

	\indent
Moreover, the solution of \eqref{Inhom} for $t\leq\tau$ is $U^I(t)=0$, while for $t>\tau$ it can be expressed, using \eqref{complexFourier}, as
\begin{align}\label{Sol2}
	U^{I}(t)=
	\sum_{\ell=0}^\infty
	\sum_{m=-\ell}^\ell \widehat{U^I}_{\ell,m}(t) Y_{\ell,m}=\sum_{\ell=0}^\infty
	\sum_{m=-\ell}^\ell \sqrt{\calA_{\ell}}
			\mathcal{I}_{\ell,|m|,\alpha}(t-\tau)Y_{\ell,m}.
\end{align}
\end{rem}

%
\subsection{The combined solution}\label{FullSec}
This subsection demonstrates the solution $U(t)$, $t\in[0,\infty)$, to the equation \eqref{Sys}. 

\begin{theo}\label{Theo3}
	The solution $U(t)$, $t\in[0,\infty)$, of equation \eqref{Sys} is 
	\begin{align}\label{Exact}
		U(t):=U^{H}(t)+U^{I}(t),\quad t\in[0,\infty),
	\end{align}
	where $U^{H}$ is given by \eqref{homo}
	and $U^{I}(t)=0$ for $t\leq\tau$, while for $t>\tau$, $U^{I}(t)$ is given by \eqref{InHom-Sol}.
\end{theo}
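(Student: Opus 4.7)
The plan is to exploit the linearity of the operators $\calE$ and $\calK$ together with the superposition $U = U^H + U^I$, combining the two results already in place: Proposition~\ref{Theo1} for the homogeneous component and Proposition~\ref{inhom_esta} for the inhomogeneous component. Since the initial condition $\xi$ and the driving noise $W_{\tau}$ are uncorrelated, and since the two sub-equations \eqref{Hom}--\eqref{Homcond1} and \eqref{Inhom}--\eqref{Inhomcond1} have been set up precisely so that their sum reproduces \eqref{Sys} with the initial conditions \eqref{cond1}, the argument essentially reduces to verifying this algebraic superposition on each of the two time regimes $(0,\tau]$ and $[\tau,\infty)$ in the $L_2(\Omega\times\bS^2)$ sense.

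First, for $(\bsx,t)\in\bS^2\times(0,\tau]$, Proposition~\ref{Theo1} gives $\calE U^H(\bsx,t)=0$ in $L_2(\Omega\times\bS^2)$ (uniformly for $t>t_0>0$, with $t_0$ arbitrary). By construction, $U^I\equiv 0$ on $[0,\tau]$, so trivially $\calE U^I=0$ on this interval. Linearity then yields $\calE U(\bsx,t)\,dt=0$. The initial conditions are verified additively: $U(\bsx,0)=U^H(\bsx,0)+U^I(\bsx,0)=\xi(\bsx)+0=\xi(\bsx)$ and $\partial_t U|_{t=0}=\partial_t U^H|_{t=0}+\partial_t U^I|_{t=0}=0+0=0$, matching \eqref{cond1}.

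Next, for $(\bsx,t)\in\bS^2\times[\tau,\infty)$, the equation is interpreted through the integral form \eqref{IntegformInhom}. Proposition~\ref{inhom_esta} already establishes
\[
\sup_{t\geq\tau}\bE\Bigl\|\calK U^I(t)-\tfrac{c^2}{\Gamma(2\alpha)}\int_{\tau}^{t}(t-s)^{2\alpha-1}\,dW_{\tau}(s)\Bigr\|_{L_2(\bS^2)}^2=0.
\]
For the homogeneous part, Proposition~\ref{Theo1} gives $\calE U^H=0$, and applying the fractional integrals ${_0}\calJ_t^{\alpha}$ and ${_0}\calJ_t^{2\alpha}$ to this identity together with \eqref{Homcond1} recovers a corresponding $\calK U^H$ identity (the boundary terms vanish because $\partial_t U^H(\bsx,0)=0$ and $U^H(\bsx,0)=\xi(\bsx)$ is absorbed into the identity $\calK U^H=\xi$ up to lower-order terms that cancel consistently with the construction of $\calK$). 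Adding this to the identity for $U^I$ and invoking the linearity of $\calK$ produces the combined integral equation for $U$ on $[\tau,\infty)$.

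The main obstacle is the careful bookkeeping of the $L_2(\Omega\times\bS^2)$ convergence when commuting the fractional integrals with the infinite spherical-harmonic sums and when pasting the two regimes together at $t=\tau$. These technicalities have already been dispatched, for the inhomogeneous side, by Lemmas~\ref{UICaucy}, \ref{lemVI}, and \ref{cachu-UI} (together with Theorem~\ref{fracstochin} justifying the interchange of fractional integration and stochastic integration), and on the homogeneous side by Lemma~\ref{unif-sure} and Remark~\ref{remGL}. With these pieces in hand, the proof collapses to the stated superposition, with no further estimates required.
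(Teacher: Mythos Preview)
Your proposal is correct and matches the paper's approach, which is simply superposition via linearity: the paper states Theorem~\ref{Theo3} without a separate proof, treating it as the immediate consequence of the decomposition~\eqref{Full} set up in the introduction together with Propositions~\ref{Theo1} and~\ref{inhom_esta}. Your write-up supplies the obvious details the paper leaves implicit; the only superfluous part is the paragraph about recovering a ``$\calK U^H$ identity'' on $[\tau,\infty)$---this is not needed, since the homogeneous piece is handled directly via $\calE U^H=0$ (Proposition~\ref{Theo1}) on all of $(0,\infty)$, and only the inhomogeneous piece requires the integral reformulation~\eqref{IntegformInhom}.
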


The following result shows that the solution $U$ to the equation (\ref{Sys}) is a centered, 2-weakly isotropic Gaussian random field.

\begin{prop}\label{Fulliso}
	Let the field $U(t)$, $t>0$, given by {\rm(\ref{Exact})}, be the solution to the equation {\rm(\ref{Sys})}. Then, $U$ is a centred, $2$-weakly isotropic Gaussian random field with random coefficients
	\begin{align}\label{gamlm}
		\widehat{U}_{\ell,m}(t):=
		\begin{cases}
			\widehat{U^{H}}_{\ell,m}(t),&t\leq\tau,\\
			\widehat{U^{H}}_{\ell,m}(t)+ \widehat{U^{I}}_{\ell,m}(t),&t>\tau,
		\end{cases}
	\end{align}
	where $ \widehat{U^{H}}_{\ell,m}(t)$ and $ \widehat{U^{I}}_{\ell,m}(t)$ are given by \eqref{xile} and \eqref{zetalm} respectively. 
	
	\indent
	Moreover, for $\ell,\ell^\prime\in\N_{0}$, $m=-\ell,\dots,\ell$ and $m^\prime=-\ell^\prime,\dots,\ell^\prime$, there holds
	\begin{align*}
		\bE \left[ \widehat{U}_{\ell,m}(t)\overline{ \widehat{U}_{\ell^\prime,m^\prime}(t)}\right]=\delta_{\ell\ell^\prime}\delta_{mm^\prime}
		\begin{cases}
			\calC_\ell \vert F_{\ell,\alpha}(t)\vert^2,&t\leq\tau,\\
			\calC_\ell \vert F_{\ell,\alpha}(t)\vert^2 +\calA_\ell\sigma_{\ell,t-\tau,\alpha}^2,&t>\tau.
		\end{cases}
	\end{align*}
\end{prop}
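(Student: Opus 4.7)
The plan is to exploit the decomposition $U = U^H + U^I$ together with the independence of the two sources of randomness $\xi$ and $W_\tau$, and then reduce everything to Propositions~\ref{Gausshomo} and~\ref{covUI}.

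First I would check centering and the random-coefficient representation. Since $\bE[\xi]=0$ and $\bE[W_\tau(t)]=0$, Propositions~\ref{Gausshomo} and~\ref{covUI} give $\bE[U^H(t)]=\bE[U^I(t)]=0$, hence $\bE[U(t)]=0$. For the Fourier coefficients, for $t\leq\tau$ we have $U^I(t)\equiv0$ so $\widehat{U}_{\ell,m}(t)=\widehat{U^H}_{\ell,m}(t)$, while for $t>\tau$, the linearity of the spherical Fourier transform together with \eqref{Exact} gives $\widehat{U}_{\ell,m}(t)=\widehat{U^H}_{\ell,m}(t)+\widehat{U^I}_{\ell,m}(t)$, matching \eqref{gamlm}.

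Next I would compute the second-moment formula using independence. Because $\xi$ and $W_\tau$ are uncorrelated Gaussian, the coefficients $\widehat{U^H}_{\ell,m}(t)$ depend only on $\xi$ while $\widehat{U^I}_{\ell,m}(t)$ depend only on $W_\tau$, so they are independent and in particular
\[
\bE\bigl[\widehat{U^H}_{\ell,m}(t)\,\widehat{U^I}_{\ell',m'}(t)\bigr]=\bE[\widehat{U^H}_{\ell,m}(t)]\,\bE[\widehat{U^I}_{\ell',m'}(t)]=0.
\]
Expanding $\bE[\widehat{U}_{\ell,m}(t)\widehat{U}_{\ell',m'}(t)]$ and plugging in \eqref{var-xile} and \eqref{var-zet} yields the claimed expression, where the case split at $t=\tau$ reflects the vanishing of $\widehat{U^I}_{\ell,m}$ on $[0,\tau]$.

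For 2-weak isotropy and rotational invariance of the covariance, I would compute $\bE[U(\bsx,t)U(\bsy,t)]$ by splitting across the two components; the cross terms vanish by independence, and each of the remaining diagonal terms was shown in the proofs of Propositions~\ref{Gausshomo} and~\ref{covUI} to depend on $\bsx,\bsy$ only through $\bsx\cdot\bsy$ via Legendre polynomials, so their sum does too.

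The last step, and the one requiring slightly more care, is Gaussianity of $U(t)$. The cleanest route is via characteristic functions: since $U^H(t)$ and $U^I(t)$ are independent, for any $r\in\R$
\[
\varphi_{U(t)}(r)=\varphi_{U^H(t)}(r)\,\varphi_{U^I(t)}(r),
\]
and each factor is the characteristic function of a centered Gaussian with finite variance established in Propositions~\ref{Gausshomo} and~\ref{covUI}. The product is therefore the characteristic function of a centered Gaussian whose variance is the sum, finite by the assumptions \eqref{New-Cl} and \eqref{New-Al}. More generally, for any finite collection of points $\bsx_1,\ldots,\bsx_k$ on $\bS^2$, the vectors $(U^H(\bsx_i,t))_{i=1}^k$ and $(U^I(\bsx_i,t))_{i=1}^k$ are independent Gaussian vectors, so their sum is Gaussian, which upgrades 2-weak isotropy to strong isotropy in the Gaussian sense. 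The main obstacle, which is mild, is verifying that the joint Gaussianity of each component (rather than just of individual evaluations) is available from the series representations in Remarks~\ref{rem4} and the analogous representation for $U^I$; this follows by the same truncation-plus-continuity-theorem argument used in the proofs of Propositions~\ref{Gausshomo} and~\ref{covUI}, so no new technical input is required.
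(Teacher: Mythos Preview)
Your proposal is correct and follows essentially the same route as the paper: reduce to Propositions~\ref{Gausshomo} and~\ref{covUI} via the decomposition $U=U^H+U^I$, use the uncorrelatedness of the two components to kill the cross terms in the covariance, and invoke the Legendre-polynomial form of each piece for rotational invariance. Your treatment of Gaussianity via characteristic functions and the independence of the two Gaussian summands is in fact more explicit than the paper's, which simply appeals to the two propositions.
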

\begin{proof}
    	For $t\leq\tau$ the result follows by Proposition \ref{Gausshomo}. Let $t>\tau$, then by Propositions \ref{Gausshomo}, \ref{covUI}, and the uncorrelatedness of $U^H$ and $U^I$, we obtain
\begin{align*}
\bE\left[U(\bsx,t)U(\bsy,t)\right]&=\bE\left[U^{H}(\bsx,t)U^{H}(\bsy,t)\right]+\bE\left[U^{I}(\bsx,t)U^{I}(\bsy,t)\right]\\
&= \sum_{\ell=0}^\infty(2\ell+1)\left(\calC_\ell (F_{\ell,\alpha}(t)^2) +\calA_\ell\sigma_{\ell,t-\tau,\alpha}^2\right)P_{\ell}(\bsx\cdot\bsy).
\end{align*}
As $P_{\ell}(\bsx\cdot\bsy)$ depends only on the inner product of $\bsx$ and $\bsy$, we conclude that the covariance function $\bE\left[U^{I}(\bsx,t)U^{I}(\bsy,t)\right]$ is rotationally invariant. Since $t>\tau$,  we note that for $\ell,\ell^\prime\in\N_{0}$, $m=-\ell,\dots,\ell$ and $m^\prime=-\ell^\prime,\dots,\ell^\prime$, by Propositions \ref{Gausshomo} and \ref{covUI},
	\begin{align*}
		\bE \left[ \widehat{U}_{\ell,m}(t)\overline{ \widehat{U}_{\ell^\prime,m^\prime}(t)}\right]=\big(\calC_\ell \vert F_{\ell,\alpha}(t)\vert^2 +\calA_\ell\sigma_{\ell,t-\tau,\alpha}^2\big)\delta_{\ell\ell^\prime}\delta_{mm^\prime}.
	\end{align*}
Using the results of Propositions \ref{Gausshomo} and \ref{covUI} we conclude that the field $U$ is Gaussian, thus completing the proof.
\end{proof}

\begin{rem}\label{rem5}
	The solution \( U \), as given in \eqref{Exact}, can be expressed using \eqref{New-HomSol} and \eqref{Sol2} as  
	\begin{align*}
		U(t)= \sum_{\ell=0}^{\infty}\big(U^{H}_{\ell}(t)+U^{I}_{\ell}(t)\big),\quad t\in[0,\infty),
	\end{align*}
	where $U^{H}_{\ell}$ is defined as 
	\begin{align}\label{tru-HomSol}
		U^{H}_{\ell}(t)&=F_{\ell,\alpha}(t)\sqrt{\calC_{\ell}}\sum_{m=-\ell}^{\ell} Z_{\ell,m} Y_{\ell,m},
	\end{align}
	and $U^{I}_{\ell}(t)=0$ for $t\leq\tau$, while for $t>\tau$,
	\begin{align}\label{tru-inho}
		U^{I}_{\ell}(t)&= \sqrt{\calA_{\ell}}\sum_{m=-\ell}^{\ell}\mathcal{I}_{\ell,|m|,\alpha}(t-\tau)Y_{\ell,m},
	\end{align}
	where $\mathcal{I}_{\ell,m,\alpha}(\cdot)$ are given in \eqref{Int}. 
\end{rem}

\subsection{H\"{o}lder continuity of the solution}\label{SecHoder}
This section derives continuity properties of the stochastic solution $U(t)$ at given time $t$ with respect to the geodesic distance on $\bS^2$, i.e.  $\tilde{d}(\bsx,\bsy)$, $\bsx,\bsy\in\bS^2$.

Notice that by Proposition  \ref{Fulliso} the stochastic solution $U$ is known to be mean square continuous (see \cite{MarPec2013}). However, to obtain sample H\"{o}lder continuity of the stochastic solution $U$ we need stronger assumptions on the angular power spectra $\calC_\ell$ and $\calA_\ell$, $\ell\in\N_{0}$.

\begin{ass}\label{ass1}
The angular power spectra $\{\calC_\ell,\ \ell\in\N_0\}$ and $\{\calA_\ell,\ \ell\in\N_0\}$ satisfy \eqref{New-Cl} and \eqref{New-Al} with $\kappa_1> 2(1+\beta^*)$ and $\kappa_2>2(1+\frac{1}{\alpha}+\beta^*)$ for some $\beta^*\in(0,1]$.
\end{ass}

Note that by Assumption {\rm\ref{ass1}} and similar steps for those of \eqref{Homvar1}-\eqref{UHfinit} and \eqref{Ga-InHom}-\eqref{P3}, we can show that there exist constants $K^{(1)}_{\beta^*},K^{(2)}_{\beta^*}>0$ and such that
\begin{align}\label{kbeta1}
   K^{(1)}_{\beta^*}:= \sum_{\ell=1}^\infty(2\ell+1)\calC_\ell\vert F_{\ell,\alpha}(t)\vert^2\lambda_{\ell}^{\beta^*}<\infty  
\end{align}
and
\begin{align}
    K^{(2)}_{\beta^*}:=\sum_{\ell=1}^\infty (2\ell+1)\calA_\ell \sigma_{\ell,t-\tau,\alpha}^2\lambda_{\ell}^{\beta^*}<\infty.
\end{align}

\begin{lem}\label{holder}
	Let $U(t)$, $t\in[0,\infty)$, be the solution given by \eqref{Exact} to the equation \eqref{Sys}, and the angular power spectra $\{\calC_\ell\}$ and $\{\calA_\ell\}$ satisfy the Assumption {\rm\ref{ass1}}. Then there exists a constant $K_{\beta^*}>0$ such that
	\[
	Var[U(\bsx,t)-U(\bsy,t)]\leq K_{\beta^*}\; (\tilde{d}(\bsx,\bsy))^{2\beta^*},\quad \beta^*\in(0,1].
	\]
\end{lem}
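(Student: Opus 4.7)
My plan is to decompose $U = U^H + U^I$ using \eqref{Exact} and exploit the uncorrelatedness of $U^H$ and $U^I$ (shown in Proposition \ref{Fulliso}) to split the variance as
\[
Var[U(\bsx,t)-U(\bsy,t)] = Var[U^H(\bsx,t)-U^H(\bsy,t)] + Var[U^I(\bsx,t)-U^I(\bsy,t)],
\]
and then bound each term separately by an expression of the form $\tilde{d}(\bsx,\bsy)^{2\beta^*}$ times a finite spectral sum controlled by Assumption \ref{ass1}.

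For the homogeneous part, I would use Proposition \ref{Gausshomo} (specifically the covariance identity \eqref{var-xile}) together with the addition theorem \eqref{addition} to write
\[
Var[U^H(\bsx,t)-U^H(\bsy,t)] = 2\sum_{\ell=1}^{\infty}(2\ell+1)\calC_\ell |F_{\ell,\alpha}(t)|^2 \bigl(1-P_\ell(\bsx\cdot\bsy)\bigr),
\]
where the $\ell=0$ term vanishes because $Y_{0,0}$ is constant. The analogous identity for $U^I$ follows from Proposition \ref{covUI} with $|F_{\ell,\alpha}(t)|^2$ replaced by $\sigma_{\ell,t-\tau,\alpha}^2$.

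The key pointwise estimate I would invoke is the standard bound
\[
1 - P_\ell(\cos\theta) \le C\,\lambda_\ell^{\beta^*}\theta^{2\beta^*},\qquad \theta\in[0,\pi],\ \beta^*\in(0,1],
\]
which is obtained by interpolating between the elementary bounds $1-P_\ell(\cos\theta)\le \tfrac{1}{2}\lambda_\ell\theta^2$ (case $\lambda_\ell\theta^2\le 1$) and $1-P_\ell(\cos\theta)\le 2$ (case $\lambda_\ell\theta^2>1$). Setting $\theta=\tilde{d}(\bsx,\bsy)$ and inserting this inequality into both spectral sums yields
\[
Var[U^H(\bsx,t)-U^H(\bsy,t)] \le 2C\,\tilde{d}(\bsx,\bsy)^{2\beta^*}\sum_{\ell=1}^{\infty}(2\ell+1)\calC_\ell |F_{\ell,\alpha}(t)|^2 \lambda_\ell^{\beta^*},
\]
and a parallel bound with $K^{(2)}_{\beta^*}$ for $U^I$.

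The main obstacle, and the only place Assumption \ref{ass1} is essential, is verifying that the two resulting spectral sums are finite; but this is exactly the content of the constants $K^{(1)}_{\beta^*}$ and $K^{(2)}_{\beta^*}$ defined in \eqref{kbeta1} (the proofs of finiteness mirror \eqref{Homvar1}--\eqref{UHfinit} and \eqref{Ga-InHom}--\eqref{P3} with the extra weight $\lambda_\ell^{\beta^*}\sim \ell^{2\beta^*}$, which is precisely absorbed by the strengthened decay rates $\kappa_1>2(1+\beta^*)$ and $\kappa_2>2(1+\tfrac{1}{\alpha}+\beta^*)$). Combining both estimates yields the claim with $K_{\beta^*} := 2C\bigl(K^{(1)}_{\beta^*} + K^{(2)}_{\beta^*}\bigr)$.
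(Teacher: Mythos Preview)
Your proposal is correct and follows essentially the same route as the paper: both arguments reduce to the spectral identity $Var[U(\bsx,t)-U(\bsy,t)]=2\sum_\ell(2\ell+1)\bigl(\calC_\ell|F_{\ell,\alpha}(t)|^2+\calA_\ell\sigma_{\ell,t-\tau,\alpha}^2\bigr)(1-P_\ell(\bsx\cdot\bsy))$, then interpolate between the bounds $1-P_\ell\le\tfrac12\lambda_\ell(1-\cos\theta)$ and $1-P_\ell\le2$ to extract the factor $\lambda_\ell^{\beta^*}\theta^{2\beta^*}$, and finally invoke the finiteness of $K^{(1)}_{\beta^*},K^{(2)}_{\beta^*}$ guaranteed by Assumption~\ref{ass1}. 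The only cosmetic difference is that the paper treats the cases $t\le\tau$ and $t>\tau$ separately and works with $|1-x|^{\beta^*}$ before passing to $\theta$, whereas you split $U=U^H+U^I$ via uncorrelatedness and interpolate directly in $\theta$; the resulting constants differ only by harmless numerical factors.
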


\begin{proof}
Let $t\leq\tau$. Then by \eqref{Exact} and \eqref{gamlm} there holds 
\begin{align*}
\bE\left[U(\bsx,t)U(\bsy,t)\right]&=\bE\left[U^{H}(\bsx,t)U^{H}(\bsy,t)\right]\\
		&= \sum_{\ell=0}^\infty(2\ell+1)\calC_\ell \vert F_{\ell,\alpha}(t)\vert^2 P_{\ell}(\bsx\cdot\bsy),
	\end{align*}
where $P_{\ell}(\cdot)$, $\ell\in\N_{0}$, is the Legendre polynomial of degree $\ell$. 

	\indent
	Thus for $t\leq\tau$ there holds
	\begin{align}\label{holder-1}
		Var[U(\bsx,t)-U(\bsy,t)]&=Var[U^H(\bsx,t)]+Var[U^H(\bsy,t)]\notag\\
		&-2\bE[U^H(\bsx,t)U^H(\bsy,t)]\notag\\
		&=2\big(	Var[U^H(\bsx,t)]-\bE[U^H(\bsx,t)U^H(\bsy,t)]\big)\notag\\
		&=2\sum_{\ell=0}^\infty(2\ell+1)\calC_\ell \vert F_{\ell,\alpha}(t)\vert^2(1-P_{\ell}(\bsx\cdot\bsy)).
	\end{align} 
Since $P_{\ell}(1)=1$ for all $\ell\in\N_0$ and $P_{\ell}^{\prime}(x)\leq P_{\ell}^{\prime}(1)$ for $x\in[-1,1]$, there holds
	\begin{align}\label{Holder1}
		\vert 1-P_{\ell}(x)\vert=\Big\vert \int_{x}^{1}P_{\ell}^{\prime}(y)dy\Big\vert\leq \vert1-x\vert\Big(\frac{\ell(\ell+1)}{2}\Big),\quad x\in[-1,1].
	\end{align}
	Also, we observe that 
	\begin{align}\label{Holder2}
		\vert 1-P_{\ell}(x)\vert\leq 2,\quad x\in[-1,1].
	\end{align}
	Then, for some $\beta^*\in(0,1]$ we can write, using \eqref{Holder1} and \eqref{Holder2},
	\begin{align}\label{Leg}
		\vert 1-P_{\ell}(x)\vert&=\vert 1-P_{\ell}(x)\vert^{\beta^*} \vert 1-P_{\ell}(x)\vert^{1-\beta^*}\notag\\
		&\leq \Big(\vert1-x\vert\frac{\ell(\ell+1)}{2}\Big)^{\beta^*} 2^{1-\beta^*}\notag\\
		&\leq 2^{1-2\beta^*}\vert1-x\vert^{\beta^*}(\ell(\ell+1))^{\beta^*}.
	\end{align}
Using the relation
$
		\vert 1-\cos r\vert= \Big\vert \int_{0}^r \sin x dx\Big\vert\leq r \sin r
		\leq r \Big\vert \int_{0}^r \cos x dx\Big\vert\leq r^2
$
with \eqref{Leg}, \eqref{holder-1} becomes bounded by
	\begin{align*}
		Var[U(\bsx,t)-U(\bsy,t)]&\leq  2^{2-2\beta^*}\vert 1-\cos \tilde{d}(\bsx,\bsy)\vert^{\beta^*}\sum_{\ell=0}^\infty(2\ell+1)\calC_\ell\vert F_{\ell,\alpha}(t)\vert^2\lambda_{\ell}^{\beta^*}\\
		&\leq 2^{2-2\beta^*}(\tilde{d}(\bsx,\bsy))^{2\beta^*}\sum_{\ell=1}^\infty(2\ell+1)\calC_\ell\vert F_{\ell,\alpha}(t)\vert^2\lambda_{\ell}^{\beta^*}\\
		&\leq 2^{2-\beta^*}(\tilde{d}(\bsx,\bsy))^{2\beta^*}K^{(1)}_{\beta^*},
	\end{align*}
Similarly, for $t>\tau$, we have
	\begin{align*}
		\bE\left[U(\bsx,t)U(\bsy,t)\right]&= \sum_{\ell=0}^\infty(2\ell+1)\left(\calC_\ell \vert F_{\ell,\alpha}(t)\vert^2 +\calA_\ell\sigma_{\ell,t-\tau,\alpha}^2\right)P_{\ell}(\bsx\cdot\bsy)
	\end{align*}
	and by Assumption \ref{ass1} and the upper bound \eqref{Leg} we obtain
	\begin{align*}
		Var[U(\bsx,t)-U(\bsy,t)]&=2\sum_{\ell=0}^\infty(2\ell+1)\left(\calC_\ell \vert F_{\ell,\alpha}(t)\vert^2 +\calA_\ell\sigma_{\ell,t-\tau,\alpha}^2\right)\\
		&\times(1-P_{\ell}(\bsx\cdot\bsy))\\
  &\leq 2^{2-2\beta^*}(\tilde{d}(\bsx,\bsy))^{2\beta^*}\\
  &\times\Big(\sum_{\ell=0}^\infty(2\ell+1)\calC_\ell\vert F_{\ell,\alpha}(t)\vert^2\lambda_{\ell}^{\beta^*}+ \sum_{\ell=0}^\infty(2\ell+1)\calA_\ell\sigma_{\ell,t-\tau,\alpha}^2\lambda_{\ell}^{\beta^*}\Big)\\
		&\leq 2^{2-2\beta^*}(\tilde{d}(\bsx,\bsy))^{2\beta^*}\big(K^{(1)}_{\beta^*}+K^{(2)}_{\beta^*}\big).
	\end{align*} 
	\indent
Thus for $t\geq0$, there exists a constant $K_{\beta^*}$ such that  
	\[
	Var[U(\bsx,t)-U(\bsy,t)]\leq K_{\beta^*}\; (\tilde{d}(\bsx,\bsy))^{2\beta^*},
	\]
	where
	\begin{align*}
		K_{\beta^*}:= 2^{2-2\beta^*}
		\begin{cases} 
			K^{(1)}_{\beta^*}, & t\leq\tau, \\
			K^{(1)}_{\beta^*}+K^{(2)}_{\beta^*}, & t>\tau,
		\end{cases}
	\end{align*}
	which completes the proof.
\end{proof}

Given our observations, we are now ready to prove the main result of this section.
\begin{theo}\label{modification}
    Let $U(t)$, $t\in[0,\infty)$, be the solution given by \eqref{Exact} to the equation \eqref{Sys}, and the angular power spectra $\{\calC_\ell\}$ and $\{\calA_\ell\}$ satisfy the Assumption {\rm\ref{ass1}}, then there exists a continuous modification of $U$ that is H\"{o}lder continuous with exponent $\gamma^* \in (0, \beta^*)$.
\end{theo}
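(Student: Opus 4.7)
The plan is to invoke Kolmogorov's continuity theorem (in its Kolmogorov--Chentsov form on the compact metric space $(\bS^2,\tilde{d})$). Two ingredients are already in hand: Proposition \ref{Fulliso} tells us that, for any fixed $t\in[0,\infty)$, the field $\bsx \mapsto U(\bsx,t)$ is Gaussian and centered; and Lemma \ref{holder} provides the second-moment bound
\[
\bE\bigl[(U(\bsx,t)-U(\bsy,t))^2\bigr] \le K_{\beta^*}\,(\tilde{d}(\bsx,\bsy))^{2\beta^*}.
\]
The strategy is to upgrade this $L_2$-modulus estimate to an $L_p$-estimate for arbitrarily large $p$ using Gaussianity, and then apply Kolmogorov's theorem.

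First I would observe that $U(\bsx,t)-U(\bsy,t)$ is a centered Gaussian random variable, since it is a linear combination of values of the Gaussian field $U(\cdot,t)$. For any centered Gaussian variable $X$ with variance $\sigma^2$, the $p$-th absolute moment satisfies $\bE[|X|^p] = c_p\,\sigma^p$, where $c_p := \bE[|Z|^p]$ for a standard normal $Z$. Combining this with Lemma \ref{holder} yields, for every $p>0$,
\[
\bE\bigl[|U(\bsx,t)-U(\bsy,t)|^p\bigr] \le c_p\,K_{\beta^*}^{p/2}\,(\tilde{d}(\bsx,\bsy))^{p\beta^*}.
\]

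Next I would fix $\gamma^* \in (0,\beta^*)$ and choose $p$ large enough so that $p\beta^* > 2 + p\gamma^*$, i.e.\ $p > 2/(\beta^*-\gamma^*)$. Since $(\bS^2,\tilde{d})$ is a compact metric space of Hausdorff dimension $2$, Kolmogorov's continuity theorem (see, e.g., \cite{MarPec11}) then guarantees the existence of a modification $\widetilde{U}(\cdot,t)$ of $U(\cdot,t)$ whose sample paths are locally H\"older continuous with exponent $\gamma^*$, because the exponent $p\beta^*$ exceeds the dimension $2$ of the index space by more than $p\gamma^*$. Since $\gamma^*\in(0,\beta^*)$ was arbitrary, this completes the proof.

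The main obstacle to watch out for is ensuring that Kolmogorov's criterion is being applied in the correct geometric setting: the sphere is not Euclidean, but because it is a smooth compact Riemannian manifold of dimension $2$, covering arguments with geodesic balls give exactly the same criterion as in the Euclidean case (with the constant $2$ replacing the ambient dimension in the classical statement). Once this is acknowledged, the rest reduces to the standard Gaussian-moment inflation above, and the flexibility of choosing $p$ freely lets us reach any H\"older exponent strictly below $\beta^*$.
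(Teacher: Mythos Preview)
Your proof is correct and follows essentially the same route as the paper: the paper simply invokes Lemma \ref{holder} and then cites Kolmogorov's continuity criterion on Riemannian manifolds from \cite[Corollary 4.5]{Lanetal16}, which packages the Gaussian moment inflation you carry out explicitly. Your version is more detailed but the underlying argument is the same.
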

\begin{proof}
Under the assumptions of Lemma \ref{holder}, applying Kolmogorov’s continuity criterion (see  \cite[Corollary 4.5]{Lanetal16}), for any fixed $t>0$, we conclude that there exists a locally H\"{o}lder continuous modification for $U$  of order $\gamma^{*}\in(0,\beta^*)$. Thus the proof is complete.
\end{proof}

\section{Approximation to the solution}\label{SecApprox}
This section provides an approximation to the solution $U$ by truncating its expansion at a degree 
(truncation level) $L\in\N$. Then we give an upper bound for the approximation error of the truncated solution $U_{L}$.

\begin{defin}
	The approximation $U_{L}$ of degree $L\in\N$ to the solution $U$ given in {\rm(\ref{Exact})} is defined as
	\begin{align}\label{Approx}
		U_{L}(t):&= U_L^H(t)+U_L^I(t)\notag\\
		&= \sum_{\ell=0}^{L}\Big(U^{H}_{\ell}(t)+U^{I}_{\ell}(t)\Big),\quad t\in[0,\infty),
	\end{align}
	where $U^{H}_{\ell}(t)$ is given by \eqref{tru-HomSol}
	and  for $t\leq\tau$, $U^{I}_{\ell}(t)=0$, while for $t>\tau$, $U^{I}_{\ell}(t)$ is given by \eqref{tru-inho}.
\end{defin}

\begin{prop}\label{theo3}
Let $\{\calC_{\ell}:\ell\in\N_{0}\}$, the angular power spectrum of $ \xi$, satisfy \eqref{New-Cl}. Let $Q^{H}_{L}(t)$, $t\in[0,\infty)$, be defined as
		\begin{align*}
		Q^{H}_{L}(t):&=\Big\Vert U^H(t)-U_L^H(t) \Big\Vert_{L_{2}(\Omega\times\bS^2)}=	\Big\Vert \sum_{\ell=L+1}^{\infty}U^{H}_{\ell}(t)\Big\Vert_{L_{2}(\Omega\times\bS^2)},
	\end{align*} 
where $U^{H}_{\ell}$ is given by \eqref{tru-HomSol}. Then, for $L>\max\{\varkappa,\ell_0\},\ \ell_0\in\N$ we have
	\[
	Q^{H}_{L}(t)\leq \mathcal{H}(t) \widetilde{C}_{\kappa_1}(1) L^{-\kappa_1/2},\quad t\geq0,
	\]
	where $\widetilde{C}_{\kappa_1}(\cdot)$ is given by \eqref{Ckappa} and $\mathcal{H}(\cdot)$ defined by \eqref{updf1}. 
\end{prop}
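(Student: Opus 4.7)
The plan is to reduce $Q^H_L(t)$ to an explicit tail series by Parseval's identity and then to dominate that series using the bound \eqref{F_upper} on $F_{\ell,\alpha}(t)$ together with the algebraic decay of $\calC_\ell$. Starting from the Karhunen--Lo\`eve-type representation $U^H_\ell(t)=F_{\ell,\alpha}(t)\sqrt{\calC_\ell}\sum_{m=-\ell}^{\ell}Z_{\ell,m}Y_{\ell,m}$ in \eqref{tru-HomSol}, the independence of the standard normals $Z_{\ell,m}$, the orthonormality of $\{Y_{\ell,m}\}$ in $L_2(\bS^2)$, and Fubini give
\[
(Q^H_L(t))^2=\bE\Big\|\sum_{\ell=L+1}^{\infty}U^H_\ell(t)\Big\|_{L_2(\bS^2)}^2=\sum_{\ell=L+1}^{\infty}(2\ell+1)\,|F_{\ell,\alpha}(t)|^2\,\calC_\ell,
\]
where the multiplicity $(2\ell+1)$ is produced by $\bE\sum_{m=-\ell}^{\ell}Z_{\ell,m}^2=2\ell+1$.

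Next I would invoke the deterministic estimate $|F_{\ell,\alpha}(t)|^2\leq (\mathcal{H}(t))^2/\lambda_\ell$ from \eqref{F_upper}, available for all $\ell\geq 1$; the hypothesis $L>\max\{\varkappa,\ell_0\}$ ensures that every index in the tail lies in the regime where this inequality applies, with $\ell_0$ absorbing any finitely many indices required by the derivation of \eqref{F_upper} from the Mittag--Leffler estimates of Proposition \ref{BoundE}. Combined with $\calC_\ell\leq\widetilde{C}\ell^{-\kappa_1}$ from \eqref{New-Cl}, this yields
\[
(Q^H_L(t))^2\leq(\mathcal{H}(t))^2\,\widetilde{C}\sum_{\ell=L+1}^{\infty}\frac{(2\ell+1)\ell^{-\kappa_1}}{\lambda_\ell}.
\]
Because $\lambda_\ell=\ell(\ell+1)$ is increasing, for $\ell\geq L+1$ one has $\lambda_\ell\geq L(L+1)\geq L^2$, so the factor $1/\lambda_\ell\leq L^{-2}$ may be pulled out of the sum.

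The remaining series $\sum_{\ell=L+1}^{\infty}(2\ell+1)\ell^{-\kappa_1}$ is estimated by the same integral comparison used in \eqref{Vpf}, giving
\[
\sum_{\ell=L+1}^{\infty}(2\ell+1)\ell^{-\kappa_1}\leq\int_{L}^{\infty}(2x^{1-\kappa_1}+x^{-\kappa_1})\,dx=\frac{2L^{2-\kappa_1}}{\kappa_1-2}+\frac{L^{1-\kappa_1}}{\kappa_1-1},
\]
which is finite since $\kappa_1>4>2$. Assembling the two factors and using $L^{-\kappa_1-1}\leq L^{-\kappa_1}$ for $L\geq 1$ produces
\[
(Q^H_L(t))^2\leq(\mathcal{H}(t))^2\,\widetilde{C}\Bigl(\frac{2}{\kappa_1-2}+\frac{1}{\kappa_1-1}\Bigr)L^{-\kappa_1}=(\mathcal{H}(t))^2\,\widetilde{C}_{\kappa_1}(1)^2\,L^{-\kappa_1},
\]
where the last equality is exactly the definition \eqref{Ckappa} specialised to $\lfloor\varkappa\rfloor=1$. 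Taking square roots finishes the proof. I do not expect significant obstacles here: the only delicate bookkeeping point is matching the stated constant $\widetilde{C}_{\kappa_1}(1)$, which forces one to pull out the full factor $\lambda_\ell^{-1}\sim\ell^{-2}$ from the Mittag--Leffler bound on $F_{\ell,\alpha}$ before rather than after applying the integral estimate, so that the surplus $L^{-2}$ converts $L^{2-\kappa_1}$ into $L^{-\kappa_1}$.
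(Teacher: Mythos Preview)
Your proposal is correct and follows essentially the same route as the paper: expand $(Q^H_L(t))^2$ via Parseval and the independence of the $Z_{\ell,m}$ into $\sum_{\ell>L}(2\ell+1)|F_{\ell,\alpha}(t)|^2\calC_\ell$, apply the bound \eqref{F_upper}, pull out $\lambda_\ell^{-1}\leq L^{-2}$, and finish with the integral comparison for $\sum_{\ell>L}(2\ell+1)\ell^{-\kappa_1}$ to recover the constant $\widetilde C_{\kappa_1}(1)$. The only cosmetic difference is that the paper factors out $1/\lambda_L$ before inserting the decay assumption $\calC_\ell\le\widetilde C\ell^{-\kappa_1}$, whereas you do these in the opposite order; the resulting bounds are identical.
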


\begin{proof}
By \eqref{tru-HomSol} we can write
	\begin{align*}
		(Q^{H}_{L}(t))^2&=\bE\Bigg[\Big\Vert\sum_{\ell=L+1}^{\infty}\Big[ \sqrt{\calC_{\ell}} F_{\ell,\alpha}(t)\sum_{m=-\ell}^{\ell}Z_{\ell,m} Y_{\ell,m}\Big]\Big\Vert_{L_{2}(\bS^2)}^2\Bigg].
	\end{align*}
	Since $\{Z_{\ell,m},\ \ell\in\N_0,\ m=-\ell,\dots,\ell \}$ is a set of independent, real-valued, standard normally distributed random variables, we have
\begin{align*}
(Q^{H}_{L}(t))^2&=\sum_{\ell=L+1}^{\infty}\vert F_{\ell,\alpha}(t)\vert^2\calC_{\ell}\sum_{m=-\ell}^{\ell}\bE \left[\big(Z_{\ell,m}\big)^2\right]\Vert Y_{\ell,m}\Vert_{L_{2}(\bS^2)}^2\notag\\
&=\sum_{\ell=L+1}^{\infty}\vert F_{\ell,\alpha}(t)\vert^2\calC_{\ell}(2\ell+1),
\end{align*}
where the last step used the property
$
\sum_{m=-\ell}^{\ell}\Vert Y_{\ell,m}\Vert_{L_{2}(\bS^2)}^2=(2\ell+1)P_{\ell}(1)=(2\ell+1).
$

Using \eqref{F_upper} we obtain 
	\begin{align}\label{U1P_1}
		(Q^{H}_{L}(t))^2&=\sum_{\ell=L+1}^{\infty}\vert F_{\ell,\alpha}(t)\vert^2 (2\ell+1)\calC_{\ell}\notag\\
		&\leq \sum_{\ell=L+1}^{\infty}(2\ell+1)\dfrac{\vert \mathcal{H}(t)\vert^2}{\lambda_{\ell}}\calC_{\ell}\notag\\
		&\leq \dfrac{\vert \mathcal{H}(t)\vert^2}{\lambda_{L}}\sum_{\ell=L+1}^{\infty}(2\ell+1)\calC_{\ell}.
	\end{align}
Using \eqref{New-Cl} then \eqref{U1P_1} becomes bounded by
\begin{align}\label{N-up}
		(Q^{H}_{L}(t))^2&\leq \vert \mathcal{H}(t)\vert^2 L^{-2} \sum_{\ell=L+1}^{\infty} (2\ell+1)\calC_{\ell}\notag\\
		&\leq \vert \mathcal{H}(t)\vert^2 L^{-2}\widetilde{C}\int_{L}^{\infty}(2x+1)x^{-\kappa_1}dx\notag\\
		&= \vert \mathcal{H}(t)\vert^2 L^{-2}\widetilde{C}\bigg(\dfrac{2}{\kappa_1-2}+\dfrac{L^{-1}}{\kappa_1-1}\bigg)L^{2-\kappa_1}\notag\\
  &\le \vert \mathcal{H}(t)\vert^2 (\widetilde{C}_{\kappa_1}(1))^2L^{-\kappa_1},
	\end{align}
	which completes the proof.
\end{proof}

The following proposition provides an upper bound for the approximation errors of the truncated solution $U_{L}^I(t)$, $t>\tau$.
\begin{prop}\label{theo4}
	Let $\{\calA_{\ell}:\ell\in\N_{0}\}$, the angular power spectrum of $W_\tau$, satisfy \eqref{New-Al}. Let $Q^{I}_{L}(t)$, $t>\tau$, be defined as
	\begin{align}
		Q^{I}_{L}(t):&=\Big\Vert U^I(t)-U_L^I(t) \Big\Vert_{L_{2}(\Omega\times\bS^2)}=	\Big\Vert \sum_{\ell=L+1}^{\infty}U^{I}_{\ell}(t)\Big\Vert_{L_{2}(\Omega\times\bS^2)},
	\end{align}
	where for $t>\tau$, $U^{I}_{\ell}(t)$ is given by \eqref{tru-inho}. Then, for $L>\max\{\varkappa,\ell_0\},\ \ell_0\in\N$, we have 
	\[
Q^{I}_{L}(t)\leq	e_{\kappa_2} L^{-(\kappa_2-\frac{2}{\alpha})/2},
	\]
where $e_{\kappa_2}=2^{\frac{1-\alpha}{2\alpha}}\sqrt{C_3^I} \widetilde{A}_{\kappa_2}(1)$.
\end{prop}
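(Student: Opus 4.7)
The strategy mirrors the proof of Proposition \ref{theo3} for the homogeneous part, but with the variance estimate \eqref{up2} replacing the pointwise bound \eqref{F_upper}. First, I would expand the tail of the solution using the representation \eqref{tru-inho}:
\[
(Q_L^I(t))^2 = \bE\Bigl[\bigl\| \sum_{\ell=L+1}^{\infty} \sqrt{\calA_\ell} \sum_{m=-\ell}^{\ell} \mathcal{I}_{\ell,|m|,\alpha}(t-\tau) Y_{\ell,m}\bigr\|^2_{L_2(\bS^2)}\Bigr].
\]
Since $\{\mathcal{I}_{\ell,m,\alpha}(\cdot)\}$ are defined from independent Brownian motions $\beta_{\ell,m}$ and are therefore mutually orthogonal in $L_2(\Omega)$, and since $\{Y_{\ell,m}\}$ is orthonormal on $\bS^2$, Parseval's identity together with Proposition \ref{PropVar} gives
\[
(Q_L^I(t))^2 = \sum_{\ell=L+1}^{\infty} (2\ell+1) \calA_\ell\, \sigma_{\ell,t-\tau,\alpha}^2,
\]
where I used $\sum_{m=-\ell}^\ell \|Y_{\ell,m}\|_{L_2(\bS^2)}^2 = 2\ell+1$ (from the addition theorem).

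Next, since $L > \varkappa$, every index $\ell \geq L+1$ satisfies $\ell > \varkappa$, so the upper bound \eqref{up2} from Proposition \ref{Propsigma} applies uniformly in $t$: $\sigma_{\ell,t-\tau,\alpha}^2 \leq C_3^I \lambda_\ell^{(1-\alpha)/\alpha}$. Using $\lambda_\ell = \ell(\ell+1) \leq 2\ell^2$, this gives $\lambda_\ell^{(1-\alpha)/\alpha} \leq 2^{(1-\alpha)/\alpha}\ell^{2(1-\alpha)/\alpha}$. Combining with the algebraic decay \eqref{New-Al} of $\{\calA_\ell\}$, we obtain
\[
(Q_L^I(t))^2 \leq C_3^I\, 2^{(1-\alpha)/\alpha}\, \widetilde{A} \sum_{\ell=L+1}^{\infty} (2\ell+1) \ell^{-\kappa_2 + 2(1-\alpha)/\alpha}.
\]

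Finally, I would estimate the remaining sum by the corresponding improper integral, exactly as in \eqref{Vpf} and \eqref{N-up}:
\[
\sum_{\ell=L+1}^{\infty} (2\ell+1)\ell^{-\kappa_2 + 2(1-\alpha)/\alpha} \leq \int_{L}^{\infty} (2x+1)\, x^{-\kappa_2 + 2(1-\alpha)/\alpha}\, dx = L^{2/\alpha - \kappa_2}\Bigl(\frac{2}{\kappa_2 - 2/\alpha} + \frac{L^{-1}}{\kappa_2 - 2/\alpha + 1}\Bigr),
\]
where convergence is guaranteed by $\kappa_2 > 4 \geq 2/\alpha$. For $L \geq 1$, the factor in parentheses is bounded by $\frac{2}{\kappa_2 - 2/\alpha} + \frac{1}{\kappa_2 - 2/\alpha + 1}$, which matches exactly the quantity $(\widetilde{A}_{\kappa_2}(1))^2/\widetilde{A}$ from the definition \eqref{Akappa} with $\lfloor \varkappa \rfloor$ replaced by $1$. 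Taking square roots and collecting constants yields $Q_L^I(t) \leq 2^{(1-\alpha)/(2\alpha)} \sqrt{C_3^I}\, \widetilde{A}_{\kappa_2}(1)\, L^{-(\kappa_2 - 2/\alpha)/2} = e_{\kappa_2}\, L^{-(\kappa_2 - 2/\alpha)/2}$.

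The main subtlety, rather than an obstacle, is verifying that the bound holds \emph{uniformly in $t$}: this follows because the estimate \eqref{up2} for $\sigma_{\ell,t,\alpha}^2$ in the regime $\ell > \varkappa$ is independent of $t$ (the $t$-dependence has been absorbed into the constant $C_3^I$ in Proposition \ref{Propsigma} under the relevant range of $t$). All other steps are essentially bookkeeping to reconcile the constants with the explicit form of $e_{\kappa_2}$ stated in the proposition.
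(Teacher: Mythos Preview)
Your proposal is correct and follows essentially the same route as the paper: expand the tail via \eqref{tru-inho}, use orthogonality and Proposition~\ref{PropVar} to reduce to $\sum_{\ell>L}(2\ell+1)\calA_\ell\,\sigma_{\ell,t-\tau,\alpha}^2$, apply the variance bound \eqref{up2} together with $\lambda_\ell\le 2\ell^2$, and finish with the integral comparison yielding $(\widetilde{A}_{\kappa_2}(1))^2L^{-(\kappa_2-2/\alpha)}$. One small caveat: the constant $C_3^I$ in Proposition~\ref{Propsigma} actually carries a $t$-dependence, so the bound is not uniform in $t$ as you suggest at the end---but the proposition only claims a pointwise estimate for fixed $t>\tau$, so this does not affect the argument.
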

\begin{proof}
Using \eqref{tru-inho} then $(Q^{I}_{L}(t))^2$ can be written for $t>\tau$ as
\begin{align*}
&(Q^{I}_{L}(t))^2=\Big\Vert \sum_{\ell=L+1}^{\infty}U_{\ell}^{I}(t)\Big\Vert_{L_{2}(\Omega\times\bS^2)}^2=\Big\Vert  \sum_{\ell=L+1}^{\infty}\sum_{m=-\ell}^{\ell}\sqrt{\calA_{\ell}}\mathcal{I}_{\ell,|m|,\alpha}(t-\tau)Y_{\ell,m}\Big\Vert_{L_{2}(\Omega\times\bS^2)}^2.
\end{align*}
By Proposition \ref{PropVar}, there holds
\begin{align}\label{upQ}
		(Q^{I}_{L}(t))^2&=\sum_{\ell=L+1}^{\infty}\sum_{m=-\ell}^{\ell}\calA_{\ell}\bE \Big[\big(\calI_{\ell,|m|,\alpha}(t-\tau)\big)^2\Big]\Vert  Y_{\ell,m}\Vert_{L_{2}(\bS^2)}^2\notag\\
		&=\sum_{\ell=L+1}^{\infty}\sigma_{\ell,t-\tau,\alpha}^2\calA_{\ell}\sum_{m=-\ell}^{\ell}\Vert Y_{\ell,m}\Vert_{L_{2}(\bS^2)}^2	\notag\\
		&= \sum_{\ell=L+1}^{\infty} \calA_{\ell}(2\ell+1)\sigma_{\ell,t-\tau,\alpha}^2\notag\\
	&\leq C_3^I \sum_{\ell=L+1}^{\infty} \calA_{\ell}(2\ell+1)\lambda_\ell^{\frac{1-\alpha}{\alpha}},
	\end{align}
	where the last step uses \eqref{up2}.
	
Now similar to \eqref{N-up}, the upper bound \eqref{upQ} can be bounded by
	\begin{align*}
	(Q^{I}_{L}(t))^2&\leq 2^{\frac{1-\alpha}{\alpha}}C_3^I\sum_{\ell=L+1}^{\infty} \calA_{\ell}(2\ell+1)\ell^{\frac{2-2\alpha}{\alpha}}
	\leq 2^{\frac{1-\alpha}{\alpha}}C_3^I (\widetilde{A}_{\kappa_2}(1))^2 L^{-(\kappa_2-\frac{2}{\alpha})},
\end{align*}
which completes the proof.
\end{proof}

The following theorem can be established by combining the results from Propositions \ref{theo3} and \ref{theo4}. 
\begin{theo}\label{The5}
	Let $U(t)$, $t\in[0,\infty)$, be the solution \eqref{Exact} to the equation \eqref{Sys}, and the conditions of Propositions {\rm\ref{theo3}} and {\rm\ref{theo4}} be satisfied. Let $\tilde{\kappa}_\alpha:=\min(\kappa_1,\kappa_2-\frac{2}{\alpha})$. Let $Q_L(t)$, $t\in[0,\infty)$, be defined as
	\begin{align}\label{fullq}
		Q_L(t):=\Big\Vert U(t)- U_{L}(t)\Big\Vert_{L_{2}(\Omega\times\bS^2)},\quad L\geq1,  
	\end{align}
	where $U_{L}$ is given by \eqref{Approx}. 
	Then, for $L>\max\{\varkappa,\ell_0\},\ \ell_0\in\N$, the following estimate holds true:
 		\begin{align*}
			Q_L(t)\leq \left((\mathcal{H}(t)\widetilde{C}_{\kappa_1}(1))^2+e_{\kappa_2}^2\right)^{\frac{1}{2}}L^{-\tilde{\kappa}_\alpha/2}.
		\end{align*}
\end{theo}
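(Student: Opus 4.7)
The plan is to split the truncation error according to the two independent sources of randomness. By the definition of $U$ in \eqref{Exact} and of $U_L$ in \eqref{Approx}, I would first write
\[
U(t) - U_L(t) = \bigl(U^H(t) - U_L^H(t)\bigr) + \bigl(U^I(t) - U_L^I(t)\bigr),
\]
and note that the first summand depends only on the Fourier coefficients $\widehat{\xi}_{\ell,m}$ while the second depends only on the Brownian increments $d\widehat{W}_{\ell,m,0}$. Since $\xi$ and $W_\tau$ are uncorrelated by hypothesis (and both are centred by Propositions \ref{Gausshomo} and \ref{covUI}), the cross term $\bE[\langle U^H - U_L^H,\, U^I - U_L^I\rangle_{L_2(\bS^2)}]$ vanishes, so the two pieces are orthogonal in $L_2(\Omega\times\bS^2)$.

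Consequently, by Pythagoras in $L_2(\Omega \times \bS^2)$,
\[
Q_L(t)^2 \;=\; Q_L^H(t)^2 + Q_L^I(t)^2,
\]
where $Q_L^H$ and $Q_L^I$ are the quantities estimated in Propositions \ref{theo3} and \ref{theo4}. Plugging in those two bounds gives
\[
Q_L(t)^2 \;\leq\; \bigl(\mathcal{H}(t)\widetilde{C}_{\kappa_1}(1)\bigr)^{2} L^{-\kappa_1} \;+\; e_{\kappa_2}^{2}\, L^{-(\kappa_2 - 2/\alpha)}.
\]

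To finish, I would use the definition $\tilde{\kappa}_\alpha = \min(\kappa_1, \kappa_2 - 2/\alpha)$ and the fact that $L \geq 1$, which implies $L^{-\kappa_1} \leq L^{-\tilde{\kappa}_\alpha}$ and $L^{-(\kappa_2 - 2/\alpha)} \leq L^{-\tilde{\kappa}_\alpha}$. Factoring $L^{-\tilde{\kappa}_\alpha}$ and taking the square root yields exactly the claimed bound. No real obstacle is anticipated; the only delicate point is justifying the orthogonality step, which rests on the independence of $\xi$ and $W_\tau$ and on the fact that the truncation does not mix the two sources (each truncated field only removes modes in its own expansion). Once that is observed, the rest is algebraic bookkeeping using the two previously established propositions.
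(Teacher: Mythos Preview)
Your proposal is correct and matches the paper's intent: the paper gives no explicit proof for this theorem, stating only that it ``can be established by combining the results from Propositions~\ref{theo3} and~\ref{theo4}.'' Your orthogonality argument (using the uncorrelatedness of $\xi$ and $W_\tau$) to obtain $Q_L(t)^2 = Q_L^H(t)^2 + Q_L^I(t)^2$, followed by factoring out $L^{-\tilde{\kappa}_\alpha}$, is precisely the combination step the paper leaves implicit.
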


\section{Numerical studies}\label{Num}
In this section, we present numerical results to examine the evolution of the solution \( U \) for equation \eqref{Sys}. In particular, we provide several numerical examples that showcase the time-dependent behavior of the stochastic solution \( U(t) \) using simulated data inspired by the CMB map. Furthermore, we investigate the convergence rates of the truncation errors associated with the approximations \( U_{L} \). Finally, we investigate the sample H\"{o}lder property using a numerical example.

\subsection{Evolution of the solution}\label{Evol}
In this subsection, we examine the evolution of the stochastic solution $U(t)$ for equation \eqref{Sys} by utilizing simulated data inspired by the CMB map.

For the numerical results we consider $c = 1$, $\gamma = 1$, and $k = 0.05$. Also, as we deal with truncated versions of the random fields, it is not necessary that the angular power spectra $\{\calC_\ell, \ell \in \N_0\}$ and $\{\calA_\ell, \ell \in \N_0\}$ of the initial field $\xi$ and the noise $W_\tau$ strictly satisfy the conditions \eqref{eq:condCell} and \eqref{seconAl}. However, assuming that both $\kappa_1,\kappa_2>2$ is sufficient.
 
In Figure~\ref{fig:Moho cases}, we present two realizations of the truncated initial field with degree $L=600$ (i.e., $\xi=U_{600}(0)$) and different values of $\kappa_1$. The angular power spectrum $\{\calC_{\ell}\}$ of these realizations follow the form described in equation \eqref{New-Cl} with $\widetilde{D}=\widetilde{C}=1$ and $\kappa_1\in\{2.3,4.1\}$. These realizations were obtained using the Python HEALPy package \cite{Gorski2005}.

\begin{figure}[ht]
		\centering
		\subfloat[\centering Initial random field with $\kappa_1 = 2.3$, $L=600$. \label{initial_1}]{{\includegraphics[width=0.45\textwidth]{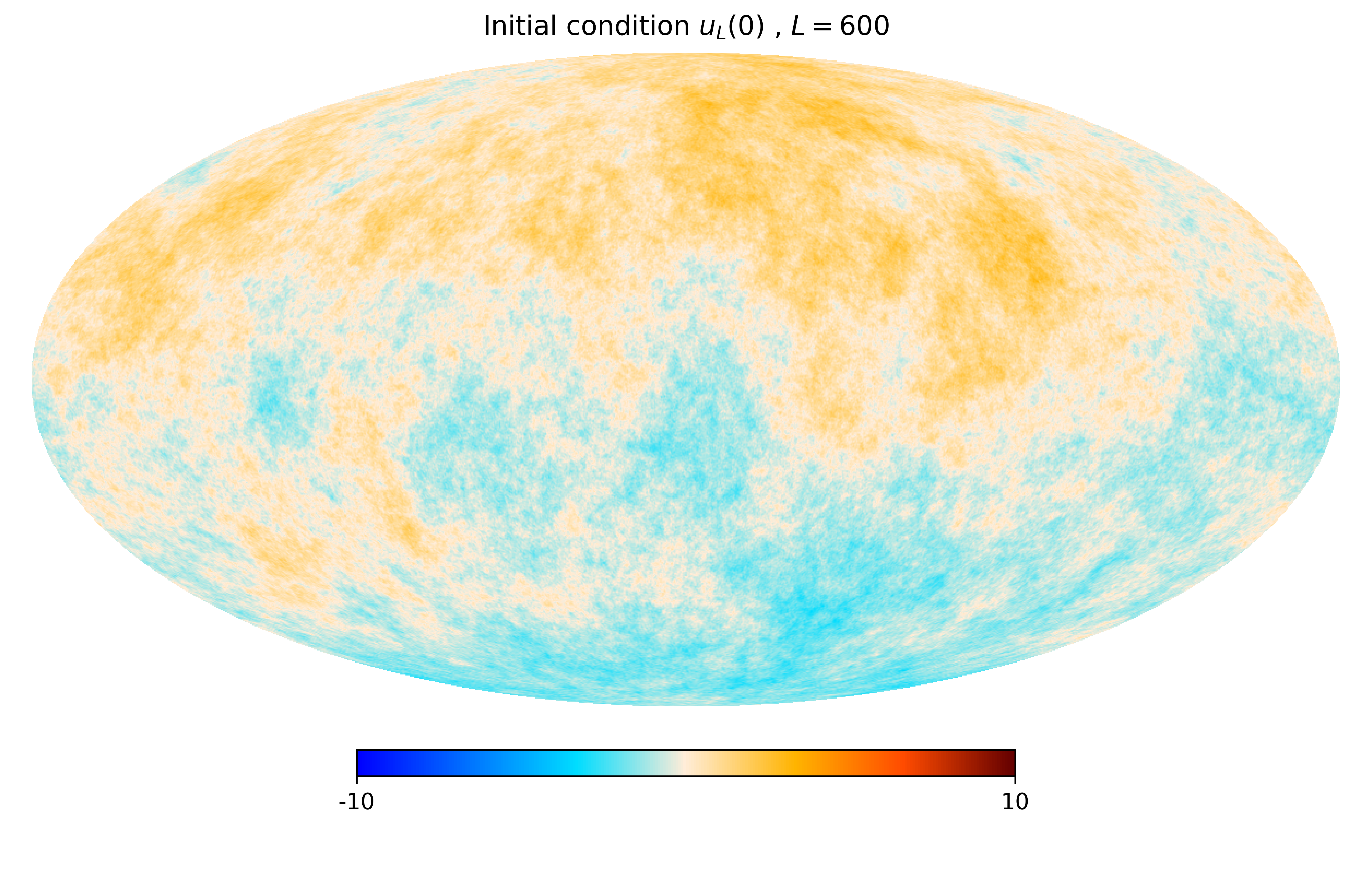} }}%
		\qquad
		\subfloat[\centering Initial random field with $\kappa_1 = 4.1$, $L=600$.\label{initial_2}]{{\includegraphics[width=0.45\textwidth]{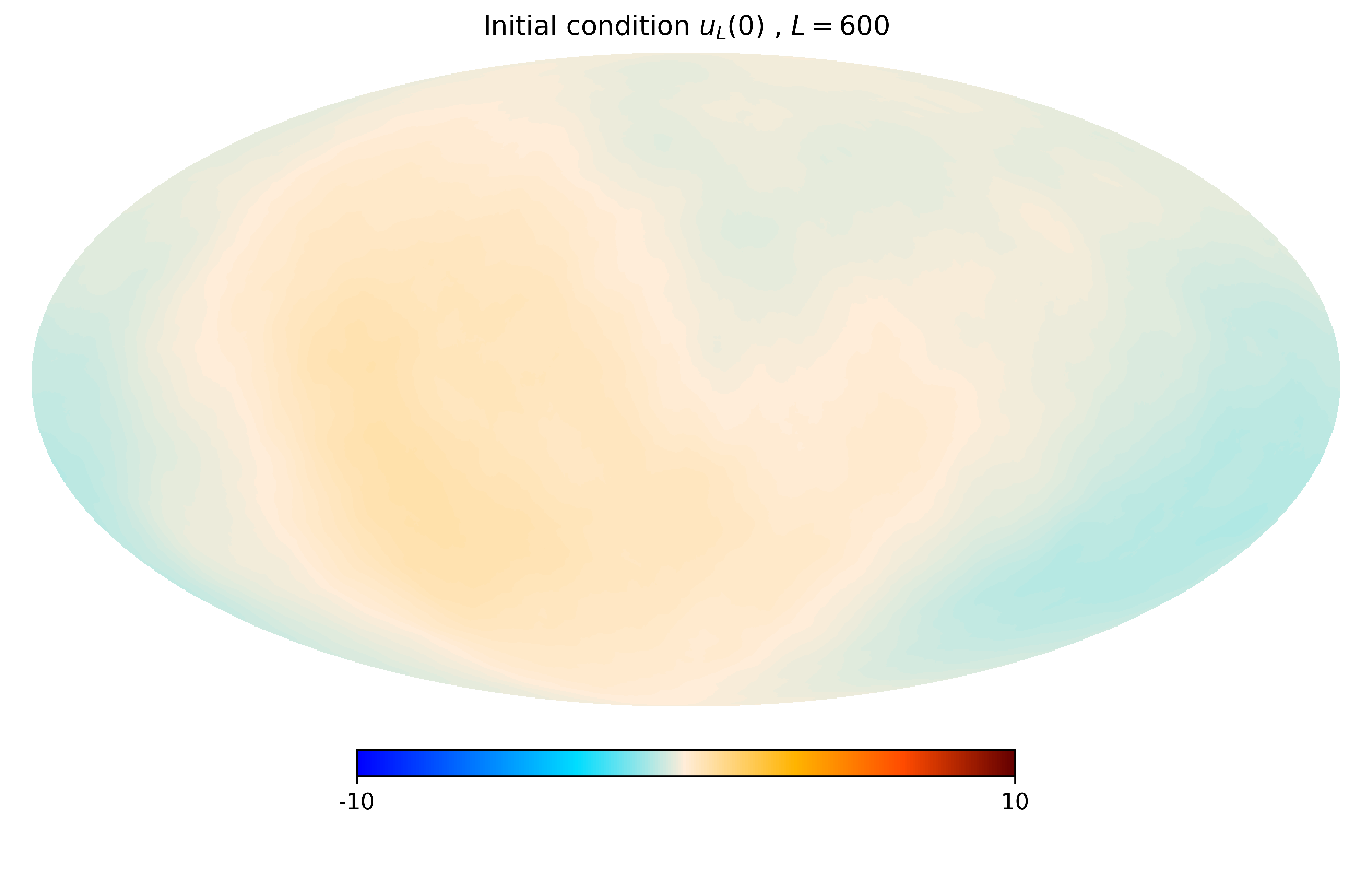} }}%
		\caption{Realizations of $U_{600}(0)$ for two different parameter values.}%
		\label{fig:Moho cases}%
	\end{figure}

Figure \ref{fig:Moho cases2} shows two different realizations of the truncated homogeneous random field at time $t=10\tau$ with $\tau=4\times10^{-2}$ and $\kappa_1\in\{2.3,4.1\}$ using the initial realizations obtained in Figure \ref{fig:Moho cases}. 
The corresponding Fourier coefficients $\widehat{U^H}_{\ell,m}(t)$ for $U_L^H$ are computed, using \eqref{UH-complexFourier}, i.e., for $m=-\ell,\dots,\ell$,
	\begin{align*}
		\widehat{U^H}_{\ell,m}(t)&= \sqrt{\calC_{\ell}}F_{\ell,\alpha}(t)Z_{\ell,m}.
	\end{align*}

\begin{figure}[ht]
		\centering
		\subfloat[\centering $U^H_{600}(10\tau)$ with $\tau=4\times10^{-2}$ and $\kappa_1 = 2.3$. \label{HOM1}]{{\includegraphics[width=0.45\textwidth]{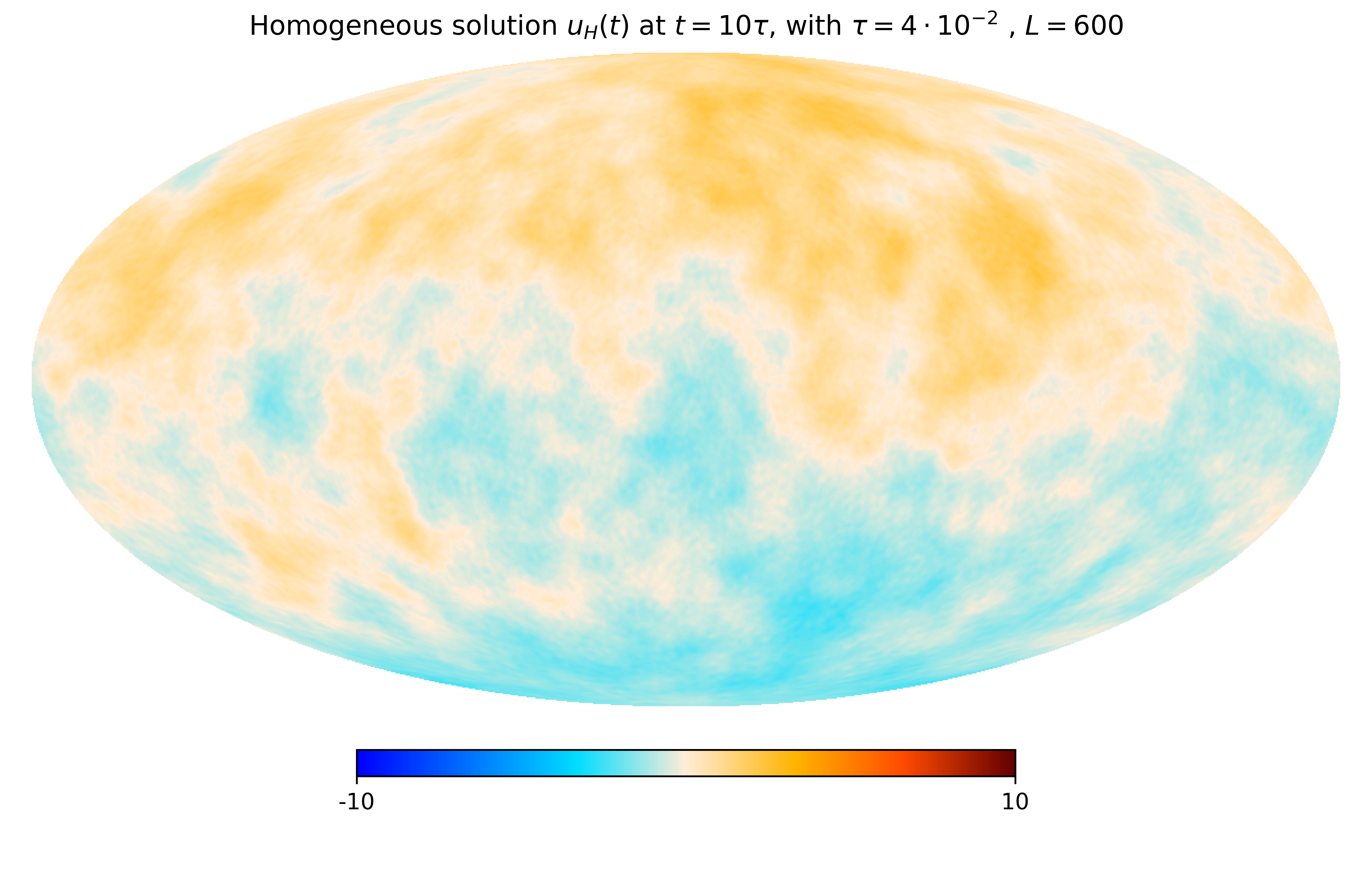} }}%
		\qquad
		\subfloat[\centering $U^H_{600}(10\tau)$ with $\tau=4\times10^{-2}$ and $\kappa_1 = 4.1$.\label{HOM2}]{{\includegraphics[width=0.45\textwidth]{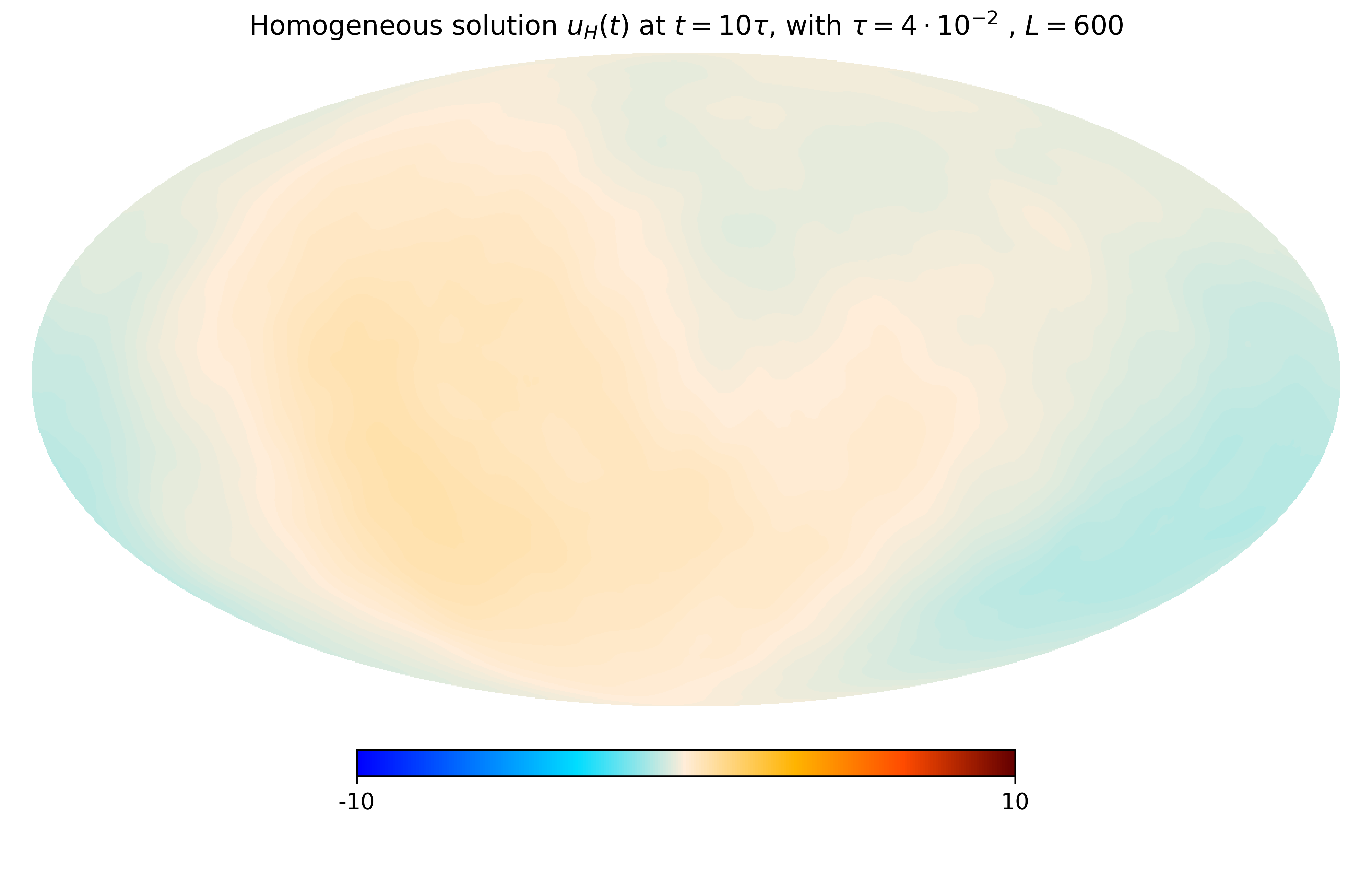} }}%
		\caption{ Truncated homogeneous solutions $U^H_{600}(10\tau)$, using the initial realizations in Figure \ref{fig:Moho cases}, with $\alpha=0.9$ and different values of $\kappa_1$.}%
		\label{fig:Moho cases2}%
	\end{figure}

In Figure \ref{fig:U400 at 10tau} we generate two different realizations using the truncated inhomogeneous field. The angular power spectrum $\{\calA_{\ell}\}$ of these realizations follows the form given by equation \eqref{New-Al} with the values $\widetilde{K}=1$, $\widetilde{A}=10$, and $\kappa_2=2.3$.
The corresponding Fourier coefficients $\widehat{U^I}_{\ell,m}(t)$ for the solution $U_L^I$ are computed, using \eqref{complexFourier}, i.e., for $m=-\ell,\dots,\ell$, 
\begin{align*}
		\widehat{U^I}_{\ell,m}(t)&=\sqrt{\calA_{\ell}}
			\mathcal{I}_{\ell,|m|,\alpha}(t-\tau),
	\end{align*}
where for each realization and given time $t>\tau$, we simulate the stochastic integrals $\calI_{\ell,|m|,\alpha}(t-\tau)$, using Proposition \ref{PropVar}, as $\mathcal{I}_{\ell,m,\alpha}(t)\sim \mathcal{N}(0,\sigma_{\ell,t,\alpha}^2)$ where $\sigma_{\ell,t,\alpha}^2$ is given by \eqref{var}.

\begin{figure}[ht]
		\centering
		\subfloat[\centering $U_{600}^{I}(10\tau)$ with $\tau=4\times10^{-2}$. \label{inHOM1}]{{\includegraphics[width=0.45\textwidth]{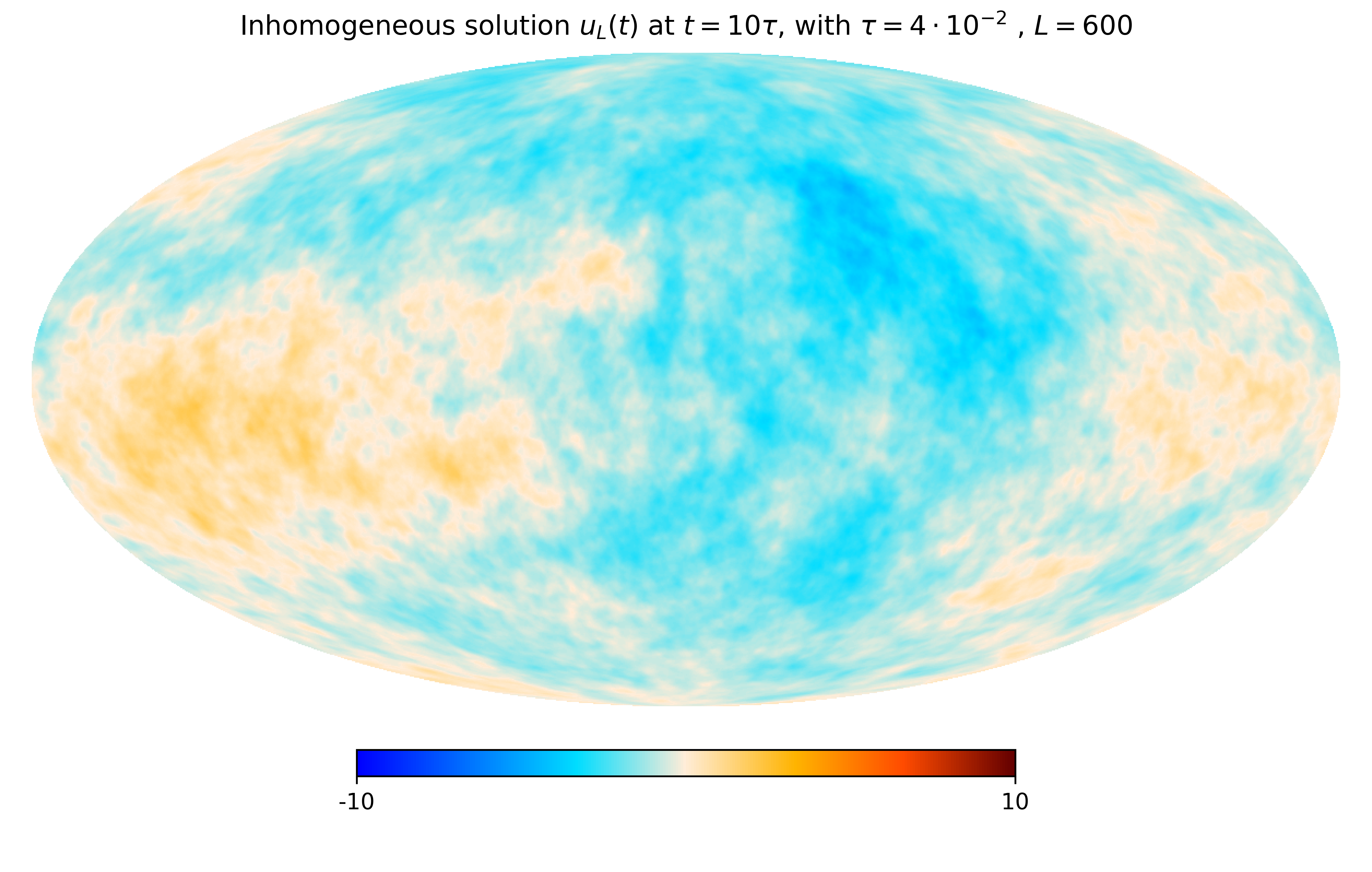} }}%
		\qquad
		\subfloat[\centering $U^I_{600}(10\tau)$ with $\tau=4\times10^{-2}$.\label{fullHOM2}]{{\includegraphics[width=0.45\textwidth]{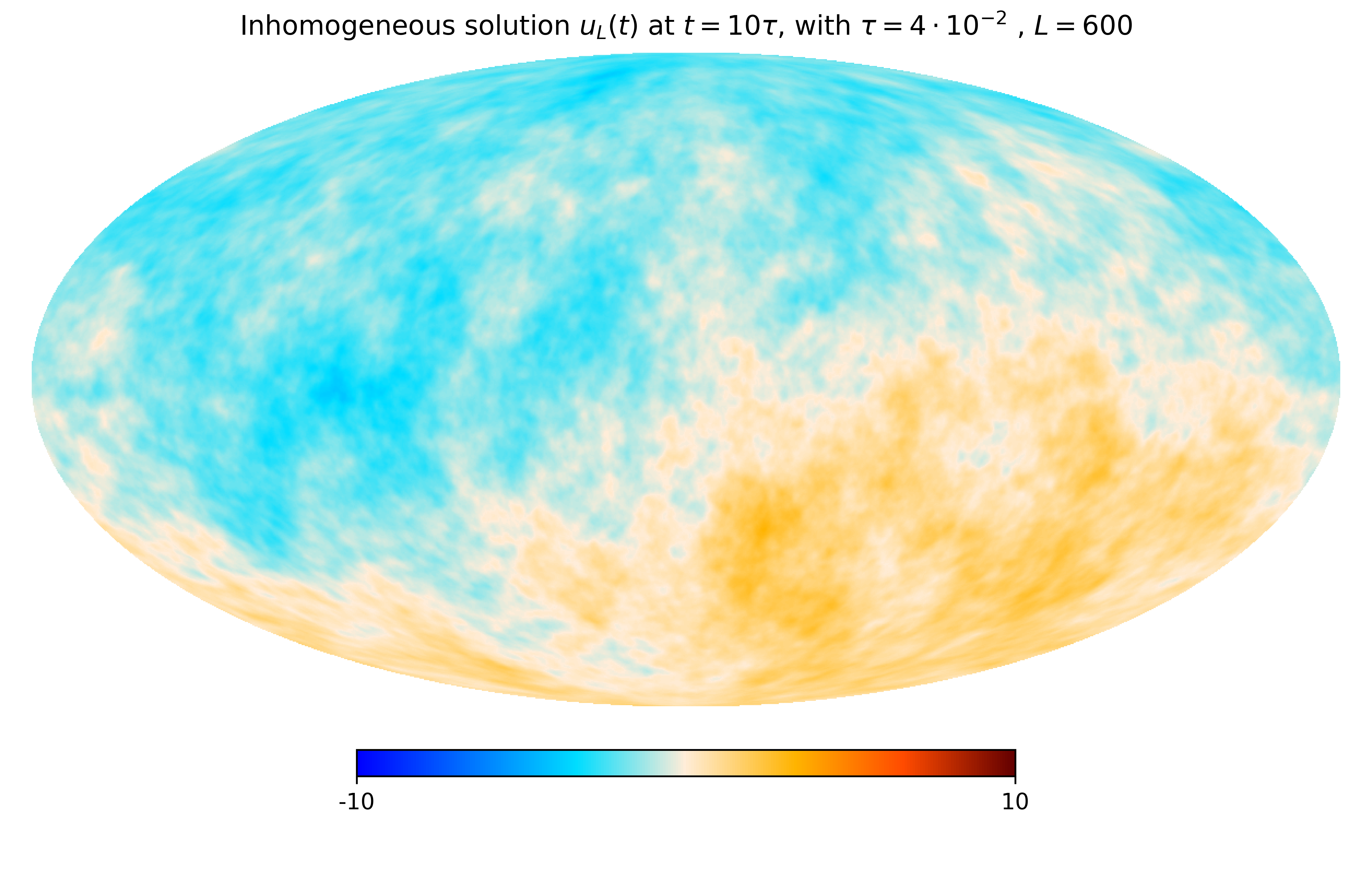} }}%
		\caption{ Realizations of the truncated inhomogeneous solution with $\alpha=0.9$ and $\kappa_2=2.5$.}%
		\label{fig:U400 at 10tau}%
	\end{figure}

Figure \ref{fig:U600 at 10tau} illustrates the evolution of the realizations shown in Figure \ref{fig:U400 at 10tau} at \( t=10\tau \) with the noise \( W_\tau \) included. In this scenario, the randomness arises from both the initial condition and \( W_\tau \), resulting in the solution \( U_{600}(10\tau) = U_{600}^H(10\tau) + U_{600}^I( 10\tau) \).
\begin{figure}[ht]
		\centering
		\subfloat[\centering $U_{600}(10\tau)$ with $\kappa_1=2.3$ and $\kappa_2=2.5$. \label{inHOM1wave}]{{\includegraphics[width=0.45\textwidth]{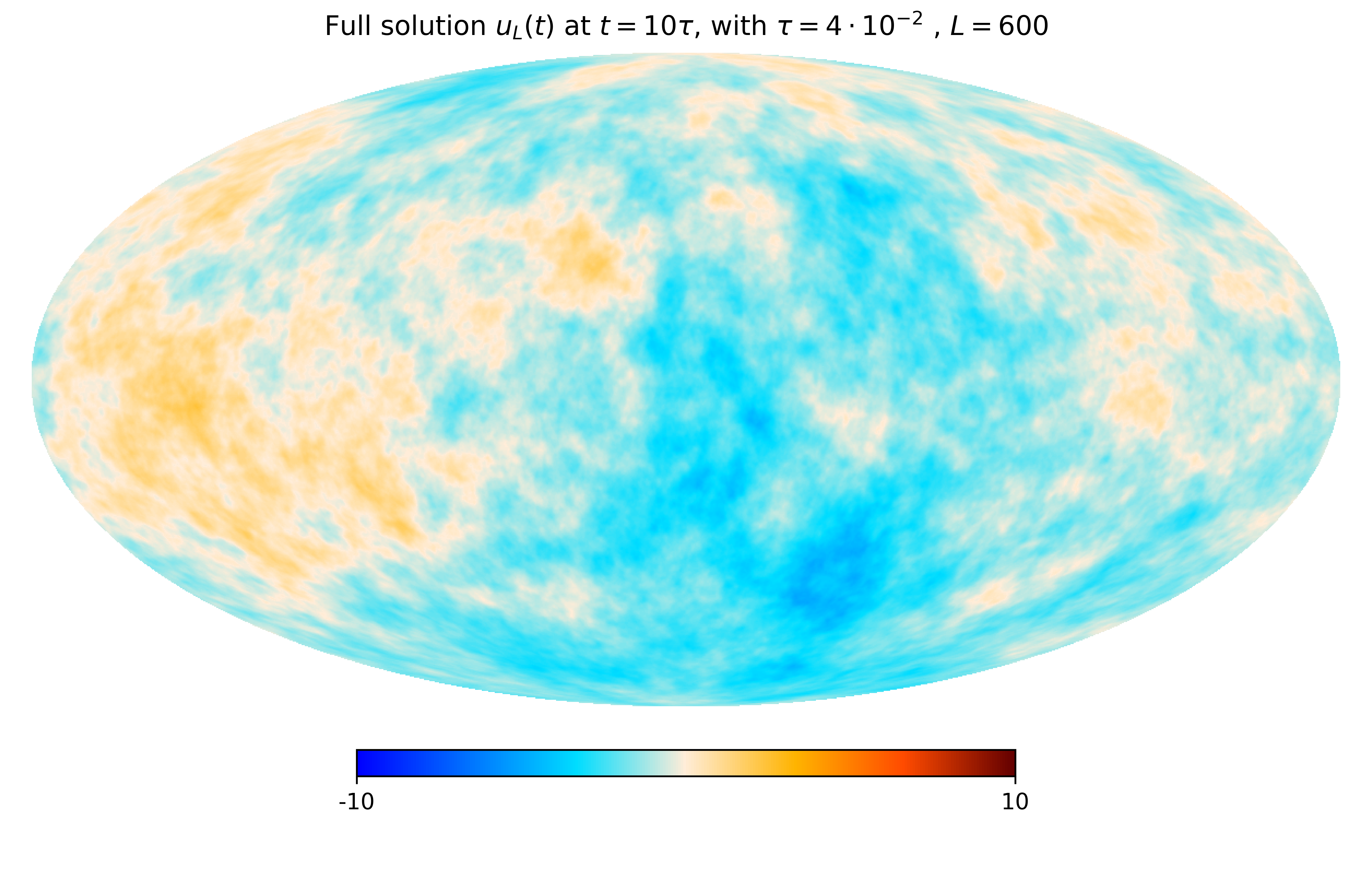} }}%
		\qquad
		\subfloat[\centering $U_{600}(10\tau)$ with $\kappa_1=4.1$ and $\kappa_2=2.5$.\label{fullHOM2wave}]{{\includegraphics[width=0.45\textwidth]{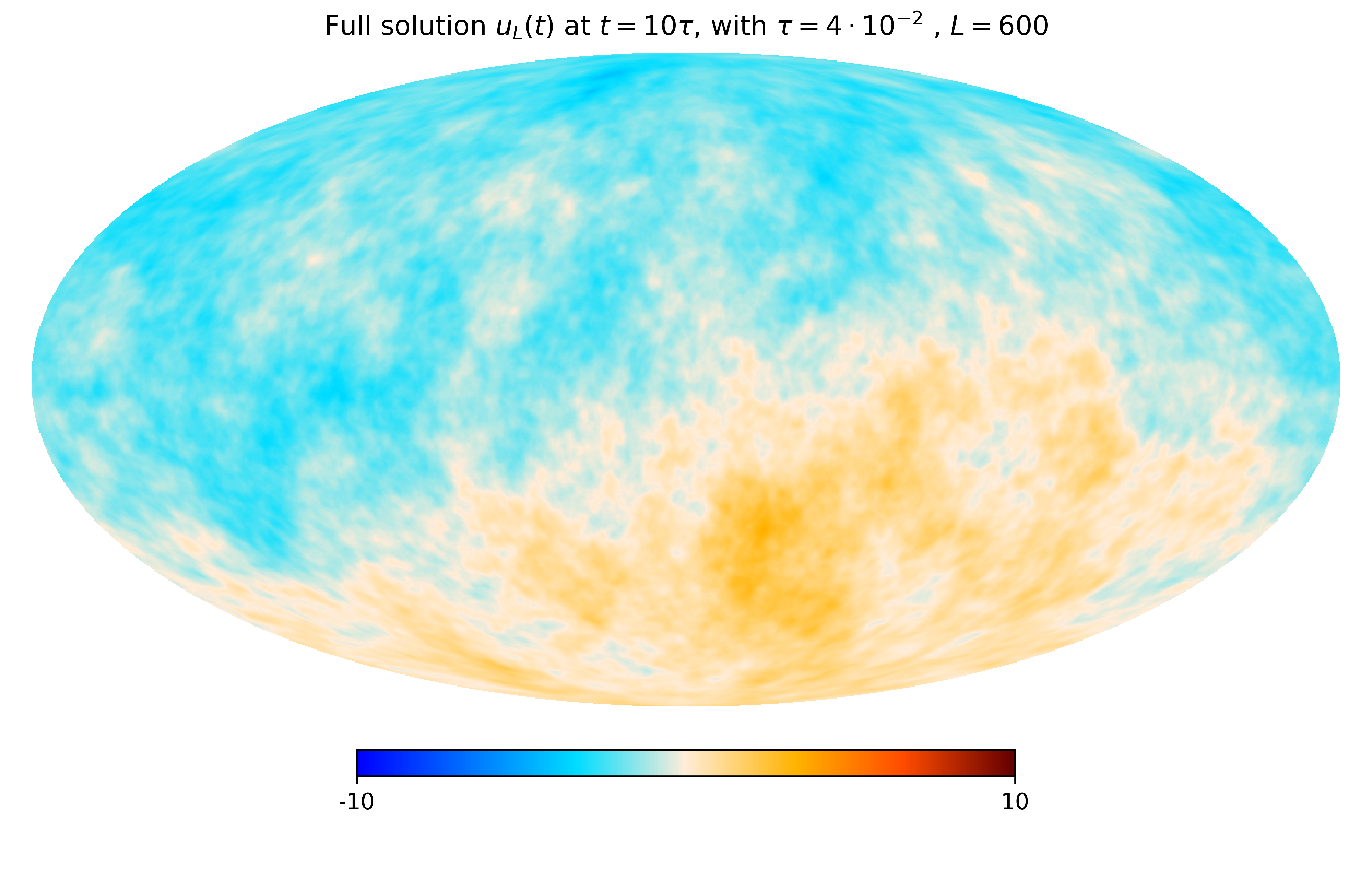} }}%
		\caption{ The truncated combined solution $U_{600}(10\tau)=U^H_{600}(10\tau)+U^I_{600}(10\tau)$ using the truncated homogeneous and inhomogeneous solutions in Figures \ref{fig:Moho cases2} and \ref{fig:U400 at 10tau}.}%
		\label{fig:U600 at 10tau}%
	\end{figure}


\begin{figure}[ht]
		\centering
		\subfloat[\centering A realization of the Inhomogeneous solution with $\kappa_2 = 4.5$. \label{UI1wave}]{{\includegraphics[width=0.45\textwidth]{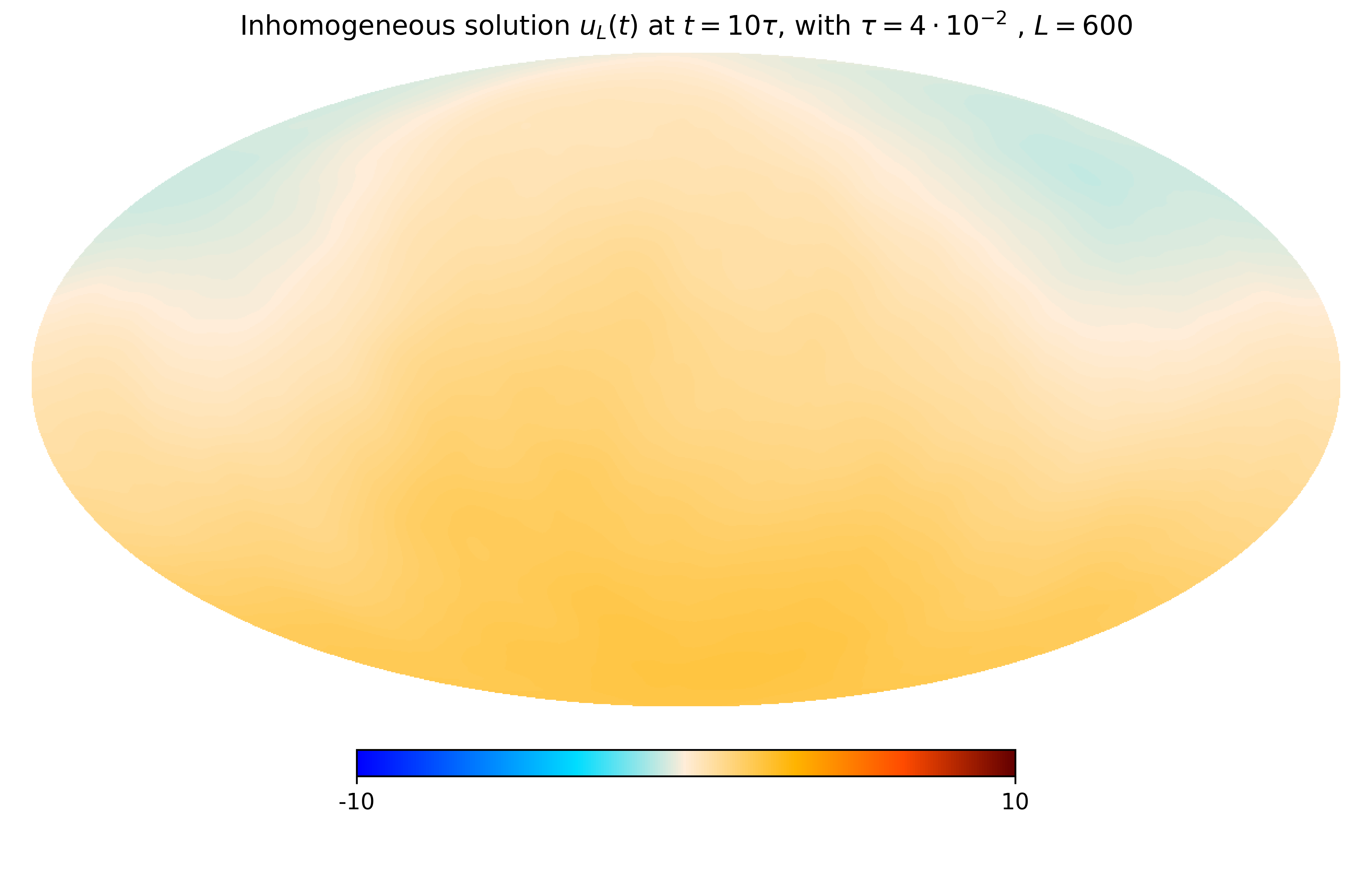} }}%
		\qquad
		\subfloat[\centering $U_{600}(10\tau)$ with $\tau=4\times 10^{-2}$, $\kappa_1=4.1$ and $\kappa_2=2.5$.\label{Fullnew}]{{\includegraphics[width=0.45\textwidth]{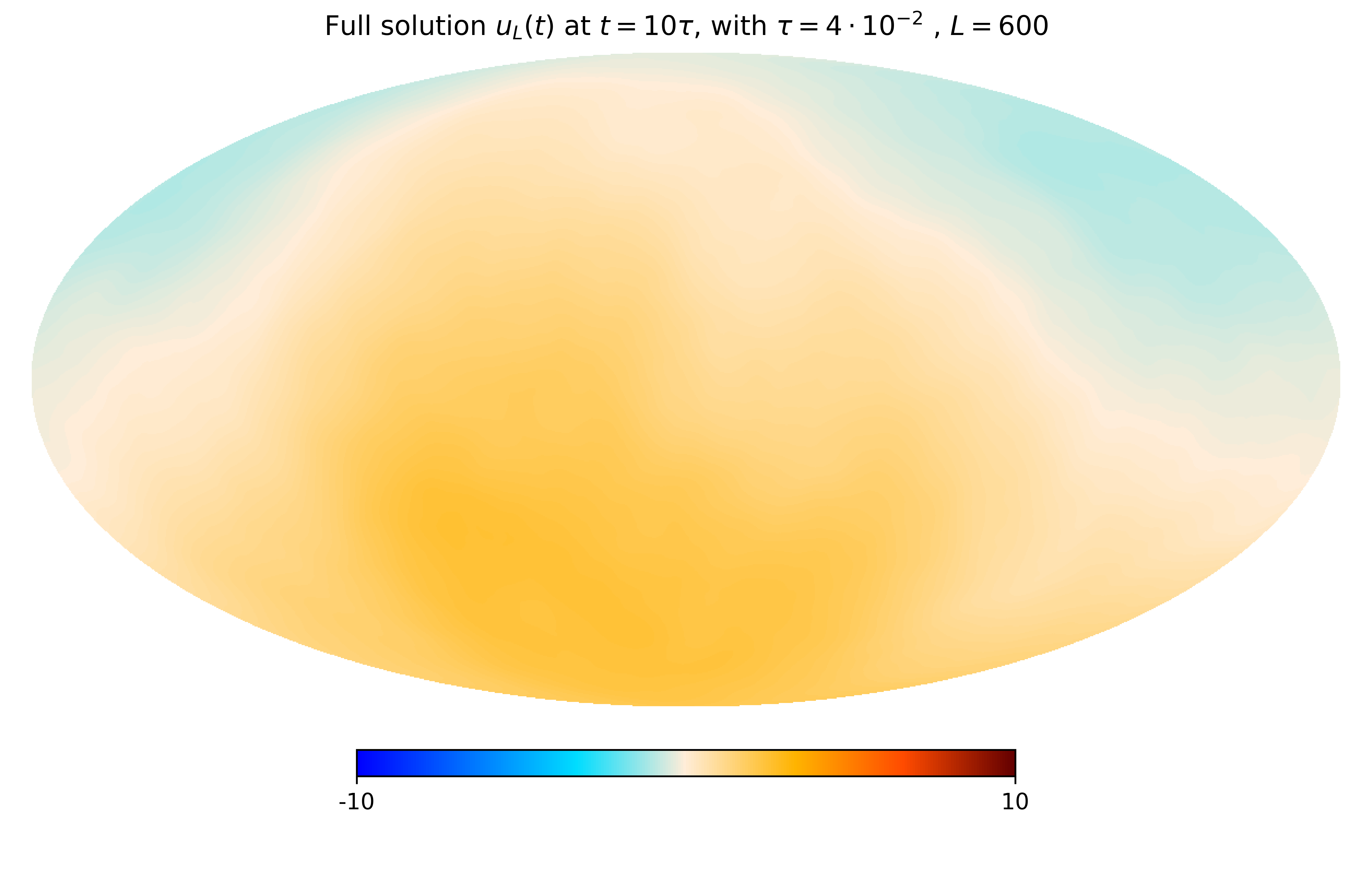} }}%
		\caption{ The truncated inhomogenous solution $U^I_{600}(10\tau)$ and the combined solution $U_{600}(10\tau)=U^H_{600}(10\tau)+U^I_{600}(10\tau)$, using the realization in Figure \ref{initial_2}.}%
		\label{newfig:U600 at 10tau}%
	\end{figure}

\subsection{Simulation of truncation errors and sample H\"{o}lder property}
In this subsection, we present some numerical examples to investigate the convergence rates of the truncation errors $Q_{L}(t)$, given by \eqref{fullq}, of the stochastic solution of \eqref{Sys}. Then, we investigate the sample H\"{o}lder property using a numerical example.

To produce numerical results, we use  $U_{\widetilde{L}}(t)$ with $\widetilde{L}=1500$ as a substitution of the solution $U(t)$ to the equation \eqref{Sys} which is given by \eqref{Exact}. Then, by using Parseval's identity, the (squared) mean $L_2$-errors can be approximated by
	\begin{align}\label{Err-system}
		(Q_{L,\widetilde{L}}(t))^2:&=\bE\Big[\Vert U_{\widetilde{L}}(\omega,t)-U_L(\omega,t)	\Vert_{L_{2}(\bS^2)}^2\Big]\notag\\ 
		&= \bE\Big[\Big\Vert \sum_{\ell=L+1}^{\widetilde{L}}\sum_{m=-\ell}^{\ell}\widehat{U}_{\ell,m}(\omega,t)Y_{\ell,m}\Big\Vert_{L_2(\bS^2)}^2\Big]\notag\\
		&\approx\dfrac{1}{\widehat{N}}\sum_{j=1}^{\widehat{N}}\sum_{\ell=L+1}^{\widetilde{L}}\sum_{m=-\ell}^{\ell}\Big\vert \widehat{U}_{\ell,m}(\omega_j,t)\Big\vert^2,
	\end{align}
where the last line approximates the expectation by the mean of $\widehat{N}$ realizations.

To illustrate the results of Theorem \ref{The5}, we consider two cases, namely $(\kappa_1,\kappa_2)=( 2.3,2.5)$ and $(\kappa_1,\kappa_2)=( 4.1,2.5)$ with $\alpha=0.9$. Then we compute the root mean $L_2$-errors using equation \eqref{Err-system} with $\widehat{N}=100$ realizations and degree up to $L=800$ and time $t=10\tau=0.4$.

Figure \ref{fig:Avg L2 error} shows numerical errors $Q_{L,\widetilde{L}}(t)$ of $\widehat{N}=100$ realizations and the corresponding theoretical errors at time $t=10\tau$ and $\tau=0.04$ corresponding to the choice of $(\kappa_1,\kappa_2)$ and $\alpha=0.9$.
The blue points in Figures \ref{A_Error}-\ref{B_Error} show the
	(sample) mean square approximation errors of the $\widehat{N}=100$ realizations of $Q_{L,\widetilde{L}}(t)$. The red line in each figure shows the corresponding theoretical approximation upper bound.
	The numerical results show that the convergence rates of the mean $L_2$-error of $U_{L}(t)$ are consistent with the corresponding theoretical results in Theorem \ref{The5}.

\begin{figure}[ht]
		\centering
		\subfloat[Average $L_2$ errors, $\kappa_1 = 2.3$, $\kappa_2 = 2.5$. \label{A_Error}]{\includegraphics[width=0.5\textwidth]{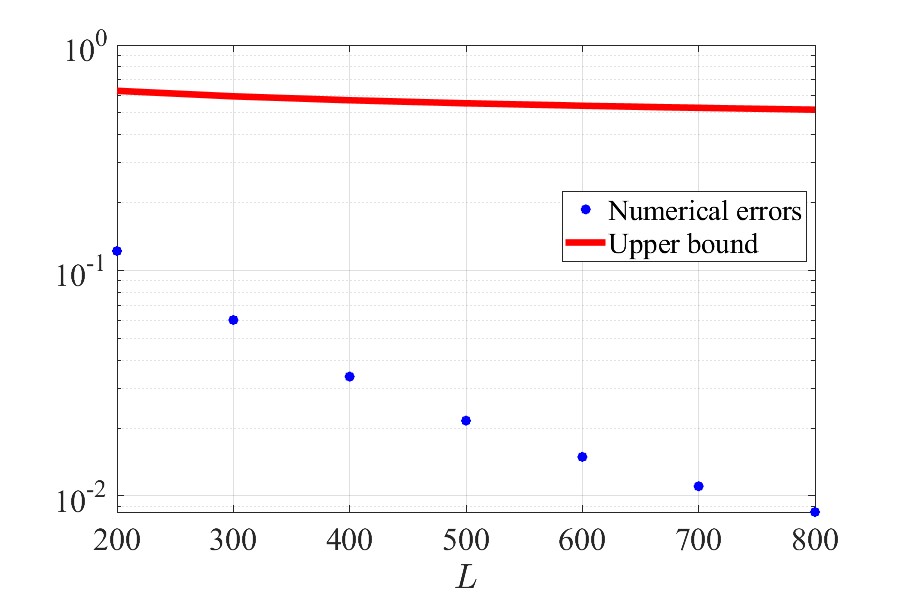}}
		\subfloat[Average $L_2$ errors, $\kappa_1 = 4.1$, $\kappa_2 = 2.5$.\label{B_Error}]{\includegraphics[width=0.5\textwidth]{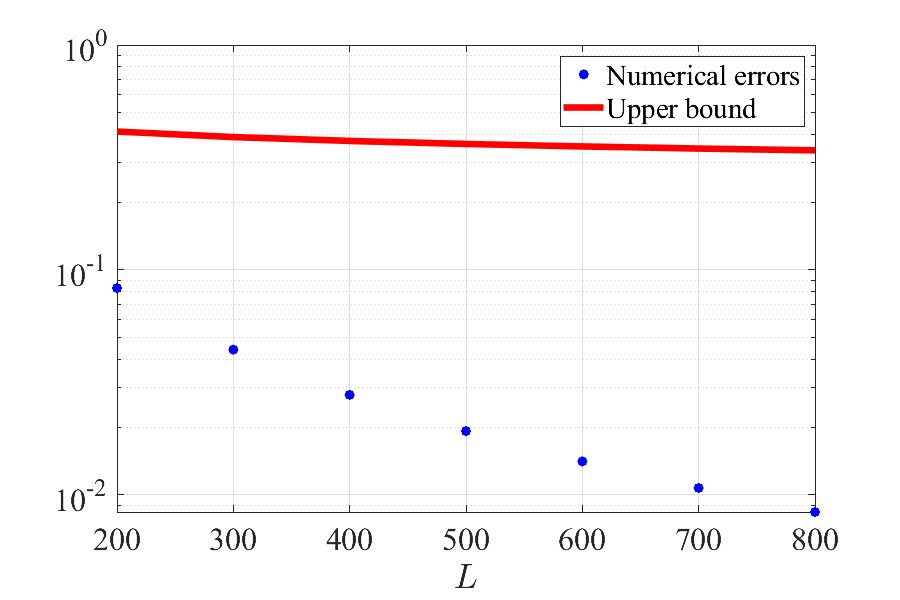}}
		\caption{Numerical errors for the combined solution $U(10\tau)$ with $\tau=0.04$ and $\alpha = 0.9$.}
		\label{fig:Avg L2 error}
	\end{figure}



To illustrate the result of Lemma \ref{holder}, we conducted a series of simulations with the following parameters: \(\alpha = 0.9\), \((\kappa_1, \kappa_2) = (2.3, 2.5)\), and time \( t = 10\tau \) with \(\tau = 0.04\). We fixed a reference point \(\bsx^* \in \bS^2\) close to the North Pole, represented in spherical coordinates,
see \eqref{sph-coord}, as $\bsx^*(\theta,\varphi)$ with $\theta=10^{-6}$, $\varphi = 0$. 
We then selected additional points \( \bsy_k \) such that the geodesic distance between \( \bsy_k \) and \(\bsx^*\) varied from 0 to \(\pi\). Using these points, we simulated $100$ realizations of the truncated random fields \( U_L^H \) and \( U_L^I \), based on their random coefficients. 

For the experiment, we defined a range of geodesic distances 
\( d_k  = 0.01k\) for $k=0,\ldots, \lceil\pi/0.01\rceil\). 
Each distance \( d_k \) corresponds to a point \( \bsy_k \) on the sphere with spherical coordinates \( (\theta_k, \phi_k) = (10^{-6} + d_k, 0)\). These points were converted to Cartesian coordinates and queried in the HEALPix map to obtain the values of the random field \( U_L \) at these locations.

We used the values of \( U_L \) at the reference point \(\bsx^*\) and at the points \( \bsy_k \) with varying distances to compute the variance of the truncated random field:
\[
{\mathcal V}_U(k) := \text{Var}(U_L(\bsx^*, t) - U_L(\bsy_k, t)).
\]
Figure \ref{fig:var} displays the simulated sample variances \( {\mathcal V}_U \) against $\tilde{d}\in(0,\pi)$. In particular, Figure \ref{Cap1} uses \(\beta^* = 0.125\), and Figure \ref{Cap2} corresponds to \(\beta^* = 0.15\). Both figures are consistent with the theoretical results presented in Lemma \ref{holder}.
The Python code for generating the numerical results is available at
\url{https://github.com/qlegia/Stochastic-wave-on-sphere}.



\begin{figure}[ht]
    \centering
    \subfloat[$\kappa_1 = 2.3$, $\kappa_2=2.5$, $\beta^\star = 0.125$ \label{Cap1}]{\includegraphics[width=0.5\textwidth]{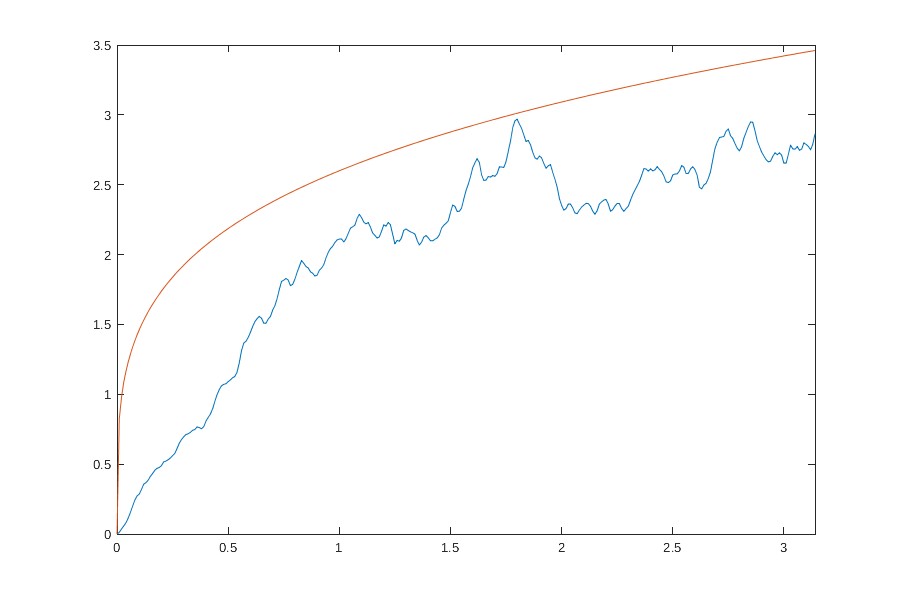}}
    \subfloat[$\kappa_1 = 4.1$, $\kappa_2=4.5$, $\beta^\star = 0.15$ \label{Cap2}]{\includegraphics[width=0.5\textwidth]{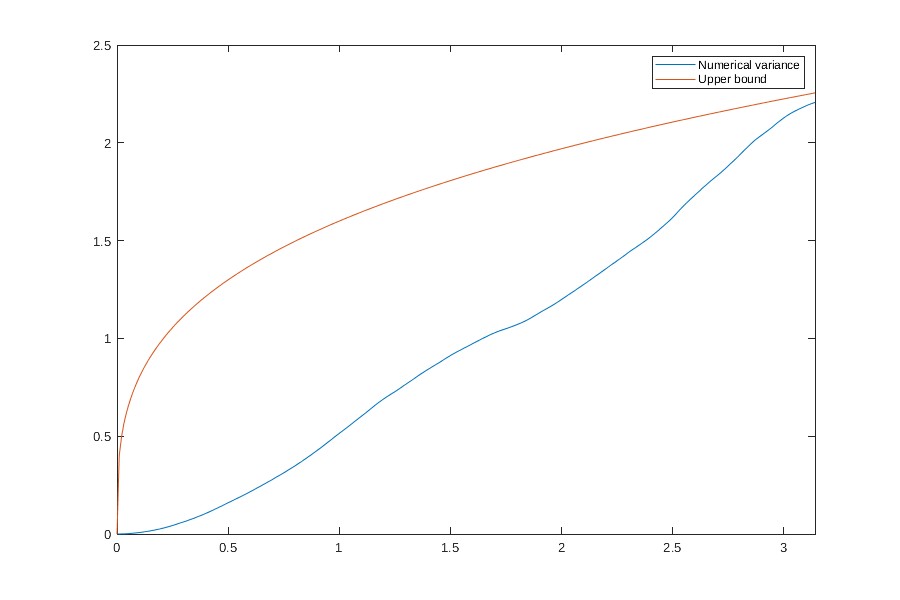}}
    \caption{ ${\mathcal V}_U$ with $\protect\tilde{d}\in[0,\pi]$.}
    \label{fig:var}
\end{figure}

\paragraph{Acknowledgements}
This research was supported under the Australian Research Council's Discovery Project funding scheme (Discovery Project numbers DP180100506 and DP220101811). This research includes extensive computations using the Linux computational cluster Katana~\cite{katana} supported by the Faculty of Science, The University of New South Wales, Sydney. The authors gratefully acknowledge helpful discussions with Prof Ian H Sloan (UNSW, Sydney) while writing the paper.



\end{document}